\newif\ifprivate
\renewcommand{\gitMark}{\jobname\,\textbullet{}\,\gitFirstTagDescribe\,\textbullet{}\,\gitAuthorName,\,\gitAuthorIsoDate}
\newcommand{\TODO}[1]%
{\par\fbox{\begin{minipage}{0.9\linewidth}\textbf{TODO:} #1\end{minipage}}\par}
\newcommand{\proofparagraph}[1]{\medskip\par\noindent{\itshape #1.} }
\DeclarePairedDelimiter{\abs}{\lvert}{\rvert}
\newcommand{\bfx}{\mathbf{x}}
\newcommand{\bibinom}[3]{\binom{#1}{#2, #3}}
\newcommand{\C}{\mathbb{C}}
\newcommand{\calC}{\mathcal{C}}
\newcommand{\calD}{\mathcal{D}}
\newcommand{\calF}{\mathcal{F}}
\newcommand{\calG}{\mathcal{G}}
\newcommand{\calH}{\mathcal{H}}
\newcommand{\calI}{\mathcal{I}}
\newcommand{\calJ}{\mathcal{J}}
\newcommand{\calK}{\mathcal{K}}
\newcommand{\calL}{\mathcal{L}}
\newcommand{\calM}{\mathcal{M}}
\newcommand{\calS}{\mathcal{S}}
\newcommand{\calT}{\mathcal{T}}
\newcommand{\calV}{\mathcal{V}}
\newcommand{\calX}{\mathcal{X}}
\newcommand{\calY}{\mathcal{Y}}
\newcommand{\calZ}{\mathcal{Z}}
\DeclarePairedDelimiter{\ceil}{\lceil}{\rceil}
\newcommand{\dd}{\mathrm{d}}
\newcommand{\DLMF}[2]{\cite[\href{http://dlmf.nist.gov/#1.E#2}{#1.#2}]{NIST:DLMF:v1.0.16}}
\DeclarePairedDelimiterXPP{\f}[2]{\foperator{#1}}(){}{#2}
\DeclarePairedDelimiterXPP{\fexp}[1]{\exp}(){}{#1}
\DeclarePairedDelimiter{\floor}{\lfloor}{\rfloor}
\newcommand{\foperator}[1]{\mathop{{#1}\empty{}}}
\DeclarePairedDelimiter{\fractional}{\{}{\}}
\DeclarePairedDelimiterXPP{\inftynorm}[1]{}{\lVert}{\rVert}{_\infty}{#1}
\newcommand{\itemref}[1]{\eqref{#1}}
\DeclarePairedDelimiter{\iverson}{[}{]}
\newcommand{\kerneldim}{D}
\renewcommand{\MR}[1]{}
\newcommand{\N}{\mathbb{N}}
\DeclarePairedDelimiter{\norm}{\lVert}{\rVert}
\DeclarePairedDelimiterXPP{\Oh}[1]{\foperator{O}}(){}{#1}
\DeclarePairedDelimiterXPP{\oh}[1]{\foperator{o}}(){}{#1}
\newcommand{\R}{\mathbb{R}}
\newcommand{\repr}{\mathsf{reprq}}
\DeclarePairedDelimiterXPP{\Res}[2]{\operatorname{Res}}(){}{#1, #2}
\DeclarePairedDelimiter{\set}{\{}{\}}
\DeclarePairedDelimiterX{\setm}[2]{\{}{\}}{#1 \colon \mathopen{}#2}
\newcommand{\tildea}{\widetilde{a}}
\newcommand{\tildeA}{\widetilde{A}}
\newcommand{\tildecalC}{\widetilde{\calC}}
\newcommand{\tildeM}{\widetilde{M}}
\newcommand{\tildev}{\widetilde{v}}
\newcommand{\trinom}[4]{\binom{#1}{#2, #3, #4}}
\newcommand{\val}{\mathsf{lval}}
\newcommand{\Z}{\mathbb{Z}}
\newtheorem{theorem}{Theorem}
\newtheorem{corollary}[theorem]{Corollary}
\newtheorem{lemma}{Lemma}[section]
\newtheorem{proposition}[lemma]{Proposition}
\theoremstyle{remark}
\newtheorem{example}[lemma]{Example}
\newtheorem{remark}[lemma]{Remark}
\numberwithin{equation}{section}
\numberwithin{figure}{section}
\numberwithin{table}{section}
\begin{document}
\title[Analysis of Summatory Functions of Regular Sequences]{Analysis of Summatory Functions of Regular Sequences: Transducer and
  Pascal's Rhombus}
\author[C.~Heuberger]{Clemens Heuberger}
\address{Clemens Heuberger,
  Institut f\"ur Mathematik, Alpen-Adria-Universit\"at Klagenfurt,
  Universit\"atsstra\ss e 65--67, 9020 Klagenfurt am W\"orthersee, Austria}
\email{\href{mailto:clemens.heuberger@aau.at}{clemens.heuberger@aau.at}}

\author[D.~Krenn]{Daniel Krenn}
\address{Daniel Krenn,
  Institut f\"ur Mathematik, Alpen-Adria-Universit\"at Klagenfurt,
  Universit\"atsstra\ss e 65--67, 9020 Klagenfurt am W\"orthersee, Austria}
\email{\href{mailto:math@danielkrenn.at}{math@danielkrenn.at} \textit{or}
  \href{mailto:daniel.krenn@aau.at}{daniel.krenn@aau.at}}

\author[H.~Prodinger]{Helmut Prodinger}
\address{Helmut Prodinger, Department of Mathematical Sciences, Stellenbosch University, 7602 Stellenbosch,
 South Africa}
\email{\href{mailto:hproding@sun.ac.za}{hproding@sun.ac.za}}

\thanks{C.~Heuberger and D.~Krenn are supported by the
   Austrian Science Fund (FWF): P\,28466-N35.}

\keywords{
  Regular sequence,
  Mellin--Perron summation,
  summatory function,
  transducer,
  Pascal's rhombus%
}
\subjclass[2010]{%
05A16; 
11A63, 
68Q45, 
68R05
}
\begin{abstract}
  The summatory function of a $q$-regular sequence in the sense of Allouche and
  Shallit is analysed asymptotically. The result is a sum of periodic
  fluctuations for eigenvalues of absolute value larger than the joint spectral
  radius of the matrices of a linear representation of the sequence.
  The Fourier coefficients of the fluctuations are expressed in terms of
  residues of the corresponding Dirichlet generating function. A known pseudo
  Tauberian argument is extended in order to overcome convergence problems in
  Mellin--Perron summation.

  Two examples are discussed in more detail: The case of sequences defined as
  the sum of outputs written by a transducer when reading a $q$ary expansion of
  the input and the number of odd entries in the rows of Pascal's rhombus.
\end{abstract}

\maketitle

\section{Introduction}
In this paper, we study the asymptotic behaviour of the summatory function of
$q$-regular sequences.\footnote{
  In the standard literature
  \cite{Allouche-Shallit:1992, Allouche-Shallit:2003:autom} these sequences
  are called $k$-regular sequences (instead of $q$-regular sequences).}
Regular sequences have been introduced by Allouche and
Shallit \cite{Allouche-Shallit:1992} (see also \cite[Chapter~16]{Allouche-Shallit:2003:autom}); these are sequences which are
intimately related to the $q$-ary expansion of their arguments. Many special
cases have been investigated in the literature; our goal is to provide a single
result decomposing the summatory function into periodic fluctuations multiplied
by some scaling functions and to provide the Fourier coefficients of these
periodic fluctuations.

Note that it is well-known that the summatory function
of a $q$-regular sequence is itself $q$-regular. (This is an immediate
consequence of \cite[Theorem~3.1]{Allouche-Shallit:1992}.) Similarly, the
sequence of differences of a $q$-regular sequence is $q$-regular. Therefore, we
might also start to analyse a regular sequence by considering it to be the
summatory function of its sequence of differences.

In the remaining paper, we first recall the definition of $q$-regular sequences
in Section~\ref{section:introduction:regular-sequences}, then formulate a
somewhat simplified version of our main result in
Section~\ref{section:introduction:main-result}. In Section~\ref{sec:heuristic},
we give a heuristic non-rigorous argument to explain why the result is
expected. We outline the relation to previous work in
Section~\ref{introduction:relation-to-previous-work}.
We give two examples in Sections~\ref{sec:transducer} and \ref{sec:pascal}.
In principle, these examples
are straight-forward applications of the results, but still, we have to
reformulate the relevant questions in terms of a $q$-regular sequence and will
then provide shortcuts for the computation of the Fourier series. The first
example is generic and deals with sequences defined as the sum of outputs of
transducer automata; the second example---which motivated us to conduct this
study at this point---is a concrete problem counting the number of odd
entries in Pascal's rhombus.

The full formulation of our results and their proofs
are given in the appendix.

\subsection{\texorpdfstring{$q$}{q}-Regular Sequences}\label{section:introduction:regular-sequences}

We start by giving a definition of $q$-regular sequences, see Allouche and
Shallit~\cite{Allouche-Shallit:1992}. Let $q\ge 2$ be a fixed
integer and $(x(n))_{n\ge 0}$ be a sequence.

Then $(x(n))_{n\ge 0}$ is said to
be \emph{$(\C, q)$-regular} (briefly: \emph{$q$-regular} or simply \emph{regular}) if the $\C$-vector space
generated by its \emph{$q$-kernel}
\begin{equation*}
  \setm[\big]{\bigl(x(q^j n+r)\bigr)_{n\ge 0}}%
  {\text{integers $j\ge 0$, $0\le r<q^j$}}
\end{equation*}
has finite dimension.
In other words, $(x(n))_{n\ge 0}$ is $q$-regular if
there is an integer $\kerneldim$ and sequences $(x_1(n))_{n\ge 0}$,
\dots, $(x_\kerneldim(n))_{n\ge 0}$ such that for every $j\ge 0$ and $0\le r<q^j$
there exist integers $c_1$, \ldots, $c_\kerneldim$ such that
\begin{equation*}
  x(q^j n+r) = c_1 x_1(n) + \dotsb + c_\kerneldim x_\kerneldim(n)\qquad{\text{for all $n\ge 0$.}}
\end{equation*}

By Allouche and Shallit~\cite[Theorem~2.2]{Allouche-Shallit:1992},
$(x(n))_{n\ge 0}$ is $q$-regular if and only if there exists a vector valued
sequence $(v(n))_{n\ge 0}$ whose first component coincides with
$(x(n))_{n\ge 0}$ and there exist square matrices $A_0$, \ldots, $A_{q-1}\in\C^{d\times d}$ such that
\begin{equation}\label{eq:linear-representation}
  v(qn+r) = A_r v(n)\qquad\text{for $0\le r<q$, $n\ge 0$.}
\end{equation}
This is called a \emph{$q$-linear representation} of $x(n)$.

The best-known example for a $2$-regular function is the binary sum-of-digits
function.

\begin{example}\label{example:binary-sum-of-digits}
  For $n\ge 0$, let $x(n)=s(n)$ be the binary sum-of-digits function. We clearly
  have
  \begin{equation}\label{eq:recursion-binary-sum-of-digits}
    \begin{aligned}
      x(2n)&=x(n),\\
      x(2n+1)&=x(n)+1
    \end{aligned}
  \end{equation}
  for $n\ge 0$.

  Indeed, we have
  \begin{equation*}
    x(2^j n+ r) = x(n) + x(r)\cdot 1
  \end{equation*}
  for integers $j\ge 0$, $0\le r <2^j$ and $n\ge 0$; i.e., the complex vector space
  generated by the $2$-kernel is generated by $(x(n))_{n\ge 0}$ and the
  constant sequence $(1)_{n\ge 0}$.

  Alternatively, we set $v(n)=(x(n), 1)^\top$ and have
  \begin{align*}
    v(2n)&=
    \begin{pmatrix}
      x(n)\\1
    \end{pmatrix}=
    \begin{pmatrix}
      1&0\\
      0&1
    \end{pmatrix}v(n),\\
    v(2n+1)&=
             \begin{pmatrix}
               x(n)+1\\
               1
             \end{pmatrix}=
    \begin{pmatrix}
      1 & 1\\
      0 & 1
    \end{pmatrix}v(n)
  \end{align*}
  for $n\ge 0$. Thus \eqref{eq:linear-representation} holds with
  \begin{equation*}
    A_0 =
    \begin{pmatrix}
      1&0\\
      0&1
    \end{pmatrix},\qquad
    A_1 =
    \begin{pmatrix}
      1&1\\
      0&1
    \end{pmatrix}.
  \end{equation*}
\end{example}
We defer the discussion of other examples, both generic such as
sequences defined by transducer automata
as well as a specific example involving
the number of odd entries in Pascal's rhombus to Sections~\ref{sec:transducer}
and~\ref{sec:pascal}.

At this point, we note that a linear
representation~\eqref{eq:linear-representation} immediately leads to an
explicit expression for $x(n)$ by induction.

\begin{remark}\label{remark:regular-sequence-as-a-matrix-product}
  Let $r_{\ell-1}\ldots r_0$ be the $q$-ary digit
  expansion\footnote{
    Whenever we write that $r_{\ell-1}\ldots r_0$ is the  $q$-ary digit
    expansion of $n$, we
    mean that $r_j\in\set{0,\ldots, q-1}$ for $0\le j<\ell$, $r_{\ell-1}\neq 0$ and
    $n=\sum_{j=0}^{\ell-1} r_j q^j$. In particular, the $q$-ary expansion of
    zero is the empty word.}
  of $n$. Then
  \begin{equation*}
    x(n) = e_1 A_{r_0}\dotsm A_{r_{\ell-1}}v(0)
  \end{equation*}
  where $e_1=\begin{pmatrix}1& 0& \dotsc& 0\end{pmatrix}$.
\end{remark}

\subsection{Main Result}\label{section:introduction:main-result}

We are interested in the asymptotic behaviour of the summatory function
$X(N)=\sum_{0\le n<N}x(n)$.

At this point, we give a simplified version of our results. We choose any
vector norm $\norm{\,\cdot\,}$ on $\C^d$ and its induced matrix norm. We set $C\coloneqq
\sum_{r=0}^{q-1}A_r$. We choose $R>0$ such that $\norm{A_{r_1}\dotsm
  A_{r_\ell}}=\Oh{R^\ell}$ holds for all $\ell\ge 0$ and $0\le r_1, \dotsc,
r_{\ell}<q$. In other words, $R$ is an upper bound for the joint spectral
radius of $A_1$, \ldots, $A_{q-1}$.
The spectrum of $C$, i.e., the set of eigenvalues of $C$, is denoted by
$\sigma(C)$. For $\lambda\in\sigma(C)$, let $m(\lambda)$ denote the size of the
largest Jordan block of $C$ associated with $\lambda$.
Finally, we consider the Dirichlet series\footnote{
Note that the summatory function $X(N)$ contains the summand $x(0)$ but the Dirichlet series cannot.
This is because the choice of including $x(0)$ into $X(N)$ will lead to more consistent results.}
\begin{equation*}
  \calX(s) = \sum_{n\ge 1} n^{-s}x(n), \qquad
  \calV(s) = \sum_{n\ge 1} n^{-s}v(n).
\end{equation*}
Of course, $\calX(s)$ is the first component of $\calV(s)$.
The principal value of the complex logarithm is denoted by $\log$. The
fractional part of a real number $z$ is denoted by $\fractional{z}\coloneqq z-\floor{z}$.

\begin{theorem}\label{theorem:simple}
  With the notations above, we have
  \begin{multline}\label{eq:formula-X-n}
    X(N) = \sum_{\substack{\lambda\in\sigma(C)\\\abs{\lambda}>R}}N^{\log_q\lambda}
    \sum_{0\le k<m(\lambda)}(\log_q N)^k
    \Phi_{\lambda k}(\fractional{\log_q N}) \\
    + \Oh[\big]{N^{\log_q R}(\log N)^{\max\setm{m(\lambda)}{\abs{\lambda}=R}}}
  \end{multline}
  for suitable $1$-periodic continuous functions $\Phi_{\lambda k}$. If there
  are no eigenvalues $\lambda\in\sigma(C)$ with $\abs{\lambda}\le R$, the
  $O$-term can be omitted.

  For $\abs{\lambda}>R$ and $0\le k<m(\lambda)$, the function $\Phi_{\lambda k}$ is Hölder continuous with any exponent
  smaller than $\log_q(\abs{\lambda}/R)$.

  The Dirichlet series $\calV(s)$ converges absolutely and uniformly on compact
  subsets of the half plane $\Re
  s>\log_q R +1$ and can be continued to a meromorphic function on the half plane $\Re
  s>\log_q R$.
  It satisfies the functional equation
  \begin{equation}\label{eq:functional-equation-V}
    (I-q^{-s}C)\calV(s)= \sum_{n=1}^{q-1}n^{-s}v(n) +
    q^{-s}\sum_{r=0}^{q-1}A_r \sum_{k\ge
      1}\binom{-s}{k}\Bigl(\frac{r}{q}\Bigr)^k \calV(s+k)
  \end{equation}
  for $\Re s>\log_q R$. The right side converges absolutely and uniformly on
  compact subsets of $\Re s>\log_q R$. In particular, $\calV(s)$ can only have
  poles where $q^s\in\sigma(C)$.

  For $\lambda\in\sigma(C)$ with
  $\abs{\lambda}>\max\set{R, 1/q}$, the Fourier series
  \begin{equation*}
    \Phi_{\lambda k}(u) = \sum_{\ell\in \Z}\varphi_{\lambda k\ell}\exp(2\ell\pi i u)
  \end{equation*}
  converges pointwise for $u\in\R$ where
  \begin{equation}\label{eq:Fourier-coefficient:simple}
    \varphi_{\lambda k\ell} = \frac{(\log q)^k}{k!}
    \Res[\bigg]{\frac{\bigl(x(0)+\calX(s)\bigr)
      \bigl(s-\log_q \lambda-\frac{2\ell\pi i}{\log q}\bigr)^k}{s}}%
    {s=\log_q \lambda+\frac{2\ell\pi i}{\log q}}
  \end{equation}
  for $\ell\in\Z$, $0\le k<m(\lambda)$.
\end{theorem}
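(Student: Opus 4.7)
My plan is to extract the asymptotics of $X(N)$ from analytic properties of the Dirichlet series $\calV(s)$ via Mellin--Perron summation, with those analytic properties derived from a functional equation coming from the linear representation~\eqref{eq:linear-representation}. By Remark~\ref{remark:regular-sequence-as-a-matrix-product} one has $\norm{v(n)} = \Oh{n^{\log_q R + \eps}}$ for every $\eps > 0$, so $\calV(s)$ converges absolutely for $\Re s > \log_q R + 1$. In that half plane I would split each $n \ge 1$ as $n = qn' + r$ with $0 \le r < q$, apply $v(qn'+r) = A_r v(n')$, extract the $n' = 0$ contribution (which equals $\sum_{n=1}^{q-1} n^{-s} v(n)$ since $v(n) = A_n v(0)$ for $1 \le n < q$), and expand $(qn'+r)^{-s}$ for $n' \ge 1$ by the binomial series. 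Moving the resulting $k = 0$ term $q^{-s} C \calV(s)$ to the left then produces exactly~\eqref{eq:functional-equation-V}. Because for $k \ge 1$ the shifted series $\calV(s+k)$ converges absolutely in the strictly larger half plane $\Re s > \log_q R + 1 - k$ and the binomial factors decay geometrically in $k$ on compacta, the right-hand side of~\eqref{eq:functional-equation-V} is meromorphic on $\Re s > \log_q R$, and inverting $I - q^{-s} C$ continues $\calV$ to that half plane with poles only where $q^s \in \sigma(C)$, and of order at most $m(\lambda)$ at $s = \log_q \lambda + 2\ell\pi i/\log q$.

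For the asymptotics I would then apply Mellin--Perron summation, which expresses a smoothed form of $X(N)$ as a contour integral of $(x(0) + \calX(s)) N^s / s$ along a vertical line with $\Re s > \log_q R + 1$. Because the integrand decays only polynomially on vertical lines, I would follow the pseudo-Tauberian strategy hinted at in the abstract: apply Mellin--Perron to a higher-order Cesàro smoothing $\sum_{n < N}(1 - n/N)^j x(n)$, whose Mellin transform carries additional denominator factors $(s+1)\cdots(s+j)$ that improve vertical-line decay; shift the contour leftwards past all eigenvalue columns with $\abs{\lambda} > R$; and recover $X(N)$ from the smoothed sum by finite differencing in $N$ together with sharp error control. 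Each pole at $s_{\lambda,\ell} = \log_q \lambda + 2\ell\pi i/\log q$ contributes $N^{s_{\lambda,\ell}}$ times a polynomial in $\log N$ of degree less than $m(\lambda)$; writing $N^{s_{\lambda,\ell}} = N^{\log_q \lambda} \exp(2\ell \pi i \fractional{\log_q N})$ and packaging the $\ell$-sum as a Fourier series in $\fractional{\log_q N}$ produces~\eqref{eq:formula-X-n}. The explicit formula~\eqref{eq:Fourier-coefficient:simple} then follows by expanding $N^s = N^{s_0} \sum_{k \ge 0}(\log N)^k (s - s_0)^k / k!$ at $s_0 = s_{\lambda,\ell}$ and matching the coefficient of $(\log_q N)^k = (\log N)^k / (\log q)^k$ inside the residue.

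Hölder continuity of $\Phi_{\lambda k}$ with any exponent below $\log_q(\abs{\lambda}/R)$ will follow once polynomial-in-$\abs{\Im s}$ growth bounds are available for $\calV$ on vertical lines in the strip $\log_q R < \Re s < \log_q R + 1$; such bounds can be obtained by iterating the functional equation against the trivial estimates in the half plane of absolute convergence, and they translate via a standard Fourier-analytic argument into sufficient decay of $\varphi_{\lambda k \ell}$ in $\ell$. The main obstacle is the Mellin--Perron contour shift itself: the naive Perron integral does not converge absolutely on the target line, the shift must navigate past infinitely many poles arranged along vertical columns, and contributions from different eigenvalues with close moduli must be separated cleanly. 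Rescuing convergence via the pseudo-Tauberian smoothing, controlling the growth of $\calV$ uniformly in $\abs{\Im s}$, and keeping the error term sharp enough to isolate only those eigenvalues with $\abs{\lambda} > R$ is the technical heart of the argument.
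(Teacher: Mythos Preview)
Your derivation of the functional equation~\eqref{eq:functional-equation-V} and of the meromorphic continuation with poles only at $q^s\in\sigma(C)$ is correct and is essentially the paper's argument (Theorem~\ref{theorem:Dirichlet-series} specialised to $n_0=1$ and right-multiplied by $v(0)$).

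The genuine gap is in how you propose to obtain the asymptotic expansion~\eqref{eq:formula-X-n}. You want to establish the fluctuation form \emph{from} Mellin--Perron: pass to a $J$th-order Ces\`aro smoothing, shift the contour, and then ``recover $X(N)$ from the smoothed sum by finite differencing in $N$ together with sharp error control.'' This last step does not work. After the shift, the smoothed sum $\frac{1}{J!}\sum_{n<N}(N-n)^J f(n)$ is known up to an error $\Oh{N^{\kappa_0+J}}$; taking $J$ finite differences to undo the smoothing cannot reduce this error below $\Oh{N^{\kappa_0+J}}$ (a difference of two $\Oh{N^{\alpha}}$ terms is still only $\Oh{N^{\alpha}}$), while the main term drops by $J$ powers of $N$. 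The information needed for~\eqref{eq:formula-X-n} is destroyed. The paper flags exactly this issue in Section~\ref{sec:heuristic}: the contour shift in the zeroth-order Mellin--Perron formula has no analytic justification here, and smoothing does not repair it by itself. Consequently the pseudo-Tauberian Theorem~\ref{theorem:use-Mellin--Perron} takes the fluctuation form~\eqref{eq:F-N-periodic} as a \emph{hypothesis}, not as a conclusion.

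What the paper does instead is prove~\eqref{eq:formula-X-n} directly, without any contour integration. Lemma~\ref{lemma:explicit-summatory} gives the closed form $F(N)=\sum_{j} C^{j} B_{r_j} A_{r_{j+1}}\cdots A_{r_{\ell-1}}+\sum_j C^j(I-A_0)$ in terms of the digits of $N$; Theorem~\ref{theorem:contribution-of-eigenspace} then projects onto each generalised left eigenspace of $C$, expands $wC^j$ explicitly, and reads off $\Phi_{\lambda k}$ as a concrete series over digit strings. H\"older continuity with any exponent below $\log_q(\abs{\lambda}/R)$ is proved from this explicit description (comparing two digit strings with a common prefix of length $j$ gives an $\Oh{j^{m-1}(\abs{\lambda}/R)^{-j}}$ bound), not from Fourier-coefficient decay. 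Right-multiplication by $v(0)$, which is a right $1$-eigenvector of $A_0$, kills the extra constants $K$ and $\vartheta$ and yields the clean form~\eqref{eq:formula-X-n}. Only \emph{after} this is in hand does the paper invoke higher-order Mellin--Perron, via Theorem~\ref{theorem:use-Mellin--Perron}, and there the a~priori fluctuation form is matched against the residue sum through a uniqueness lemma (Lemma~\ref{lemma:uniqueness-fluctuations}) to produce~\eqref{eq:Fourier-coefficient:simple}. Your residue-expansion computation of $\varphi_{\lambda k\ell}$ is then exactly right, but it needs the combinatorial half of the argument as input.
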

This theorem is proved in Appendix~\ref{section:proof-theorem-simple}.
Note that we write $\Phi_{\lambda k}(\fractional{\log_q N})$ to optically
emphasise the $1$-periodicity; technically, we have $\Phi_{\lambda
  k}(\fractional{\log_q N})=\Phi_{\lambda k}(\log_q N)$.

We come back to the binary sum of digits.

\begin{example}[Continuation of Example~\ref{example:binary-sum-of-digits}]We
  have $C=A_0+A_1=\bigl(
  \begin{smallmatrix}
    2&1\\0&2
  \end{smallmatrix}\bigr)
$. As $A_0$ is the identity matrix, any product $A_{r_1}\dotsm A_{r_\ell}$ has
the shape $A_1^k=\bigl(
\begin{smallmatrix}
  1&k\\0&1
\end{smallmatrix}\bigr)
$ where $k$ is the number of factors $A_1$ in the product. This implies that
$R$ with $\norm{A_{r_1}\dotsm A_{r_\ell}}=\Oh{R^\ell}$ may be chosen to be any number greater than $1$. As $C$ is a Jordan block
itself, we simply read off that the only eigenvalue of $C$
is $\lambda=2$ with $m(2)=2$.

Thus Theorem~\ref{theorem:simple} yields
\begin{equation*}
  X(N) = N(\log_2 N) \f{\Phi_{21}}{\fractional{\log_2 N}}
  + N \f{\Phi_{20}}{\fractional{\log_2 N}}
\end{equation*}
for suitable $1$-periodic continuous functions $\Phi_{21}$ and $\Phi_{20}$.

In principle, we can now use the functional equation
\eqref{eq:functional-equation-V}. Due to the fact that one component of $v$ is the constant
sequence where everything is known, it is more efficient to use an ad-hoc
calculation for $\calX$ by splitting the sum according to the parity of the index
and using the recurrence relation~\eqref{eq:recursion-binary-sum-of-digits} for $x(n)$. We obtain
\begin{align*}
  \calX(s)&=\sum_{n\ge 1}\frac{x(2n)}{(2n)^s} + \sum_{n\ge
            0}\frac{x(2n+1)}{(2n+1)^s}\\
  &=2^{-s}\sum_{n\ge 1}\frac{x(n)}{n^s} + \sum_{n\ge 0}\frac{x(n)}{(2n+1)^s} +
    \sum_{n\ge 0}\frac{1}{(2n+1)^s}\\
  &=2^{-s}\calX(s) + \frac{x(0)}{1^s} + \sum_{n\ge 1}\frac{x(n)}{(2n)^s} +
  \sum_{n\ge 1} x(n)\Bigl(\frac{1}{(2n+1)^s} - \frac{1}{(2n)^s}\Bigr) \\
  &\hspace{4.985em}+
  2^{-s}\sum_{n\ge 0}\frac1{\bigl(n+\frac12\bigr)^s}\\
  &= 2^{1-s}\calX(s) + 2^{-s}\f[\big]{\zeta}{s, \tfrac12} + \sum_{n\ge 1} x(n)\Bigl(\frac{1}{(2n+1)^s} - \frac{1}{(2n)^s}\Bigr)
\end{align*}
where the Hurwitz zeta function $\f{\zeta}{s, \alpha}\coloneqq\sum_{n+\alpha>0}(n+\alpha)^{-s}$ has been used. We get
\begin{equation}\label{eq:sum-of-digits-functional-equation}
  (1-2^{1-s})\calX(s)=2^{-s} \f[\big]{\zeta}{s, \tfrac12} + \sum_{n\ge 1} x(n)\Bigl(\frac{1}{(2n+1)^s} - \frac{1}{(2n)^s}\Bigr).
\end{equation}
As the sum of digits is bounded by the length of the expansion, we have
$x(n)=\Oh{\log n}$. By combining this estimate with
\begin{equation*}
  (2n+1)^{-s}-(2n)^{-s}
  = \int_{2n}^{2n+1} \Bigl(\frac{\dd}{\dd t}t^{-s}\Bigr)\,\dd t
  = \int_{2n}^{2n+1}(-s)t^{-s-1}\,\dd t
  = \Oh{\abs{s}n^{-\Re s-1}},
\end{equation*}
we see that the sum in \eqref{eq:sum-of-digits-functional-equation}
converges absolutely for $\Re s>0$ and is therefore analytic for $\Re s>0$.

Therefore, the right side of
\eqref{eq:sum-of-digits-functional-equation} is a meromorphic function for $\Re
s>0$ whose only pole is simple and at $s=1$ which originates from
$\f[\big]{\zeta}{s, \tfrac12}$.
Therefore, $\calX(s)$ is a meromorphic function for $\Re s>0$ with a double
pole at $s=1$ and simple poles at $1+\frac{2\ell \pi i}{\log 2}$ for
$\ell\in\Z\setminus\set{0}$.

Thus
\begin{equation}\label{eq:fluctuation-binary-sum-of-digit}
  \begin{aligned}
    \Phi_{21}(u) = \varphi_{210}
    &= (\log 2)\Res[\Big]{\frac{\calX(s)(s-1)}{s}}{s=1} \\
    &= (\log 2)\Res[\Big]{\frac{2^{-s}(s-1)}{1-2^{1-s}}
         \f[\big]{\zeta}{s, \tfrac12}}{s=1}
    = \frac12
  \end{aligned}
\end{equation}
by \eqref{eq:Fourier-coefficient:simple} and
\eqref{eq:sum-of-digits-functional-equation}.

We conclude that
\begin{equation*}
  X(N)=\frac12 N \log_2 N + N \f{\Phi_{20}}{\fractional{\log_2 N}}.
\end{equation*}
We refrain from computing the Fourier coefficients of $\Phi_{20}(u)$ explicitly
at this point: Numerically, they could be computed from
\eqref{eq:sum-of-digits-functional-equation}. However, an explicit expression
can be obtained by rewriting the residues of $\calX(s)$ in terms of shifted
residues of $\sum_{n\ge 1}\bigl(x(n)-x(n-1)\bigr)n^{-s}$ and computing the latter
explicitly; see \cite[Proof of
Corollary~2.5]{Heuberger-Kropf-Prodinger:2015:output}. This yields the
well-known result by Delange~\cite{Delange:1975:chiffres}.

It will also turn out that \eqref{eq:fluctuation-binary-sum-of-digit} being a constant function is an
immediate consequence of the fact that $
\begin{pmatrix}
  0& 1
\end{pmatrix}
$ is a left eigenvector of both $A_0$ and $A_1$ associated with the eigenvalue
$1$.
\end{example}

\subsection{Heuristic Approach: Mellin--Perron Summation}\label{sec:heuristic}
The purpose of this section is to explain why the formula
\eqref{eq:Fourier-coefficient:simple} for the Fourier coefficients is
expected. The approach here is heuristic and non-rigorous because we do not
have the required growth estimates.

By the Mellin--Perron summation formula of order $0$ (see, for example,
\cite[Theorem~2.1]{Flajolet-Grabner-Kirschenhofer-Prodinger:1994:mellin}),
we have
\begin{equation*}
  \sum_{1\le n<N}x(n) + \frac{x(N)}{2} = \frac1{2\pi i}\int_{\max\set{\log_q R + 2,1}
    -i\infty}^{\max\set{\log_q R + 2,1} +i\infty} \calX(s)\frac{N^s\,\dd s}{s}.
\end{equation*}
By Remark~\ref{remark:regular-sequence-as-a-matrix-product} and the definition
of $R$, we have
$x(N)=\Oh{R^{\log_q N}}=\Oh{N^{\log_q R}}$. Adding the summand $x(0)$ to match our definition of
$X(N)$ amounts to adding $\Oh{1}$.
Shifting the line of integration to the left---we have \emph{no analytic justification}
that this is allowed---and using the location of the poles of $\calX(s)$ claimed in
Theorem~\ref{theorem:simple} yield
\begin{multline*}
  X(N) = \sum_{\substack{\lambda\in\sigma(C)\\\abs{\lambda}>R}}\sum_{\ell\in\Z}
  \Res[\Big]{\frac{\calX(s)N^s}{s}}%
  {s=\log_q \lambda + \frac{2\ell\pi i}{\log q}} \\
  + \frac1{2\pi i}\int_{\log_q R+\varepsilon
    -i\infty}^{\log_q R+\varepsilon +i\infty} \calX(s)\frac{N^s\,\dd s}{s} + \Oh{N^{\log_q R} + 1}
\end{multline*}
for some $\varepsilon>0$.
Expanding $N^s$ as
\begin{equation*}
  N^s = \sum_{k\ge 0} \frac{(\log N)^k}{k!} N^{\log_q \lambda + \frac{2\ell\pi
      i}{\log q}} \Bigl(s-\log_q \lambda
  -\frac{2\ell\pi i}{\log q}\Bigr)^k
\end{equation*}
and assuming that the remainder integral converges absolutely yields
\begin{multline*}
  X(N) = \sum_{\substack{\lambda\in\sigma(C)\\\abs{\lambda}>R}} N^{\log_q
    \lambda}\sum_{0\le k<m_{\lambda\ell}}
  (\log_q N)^k \sum_{\ell\in\Z}\varphi_{\lambda k\ell}\exp\bigl(2\ell\pi i \log_q
  N\bigr)\\
  + \Oh{N^{\log_q R+\varepsilon}+1}
\end{multline*}
where $m_{\lambda \ell}$ denotes the order of the pole of $\calX(s)/s$ at
$\log_q\lambda + \frac{2\ell\pi i}{\log q}$ and $\varphi_{\lambda k \ell}$ is as
in \eqref{eq:Fourier-coefficient:simple}.

Summarising, this heuristic approach explains most of the formul\ae{} in
Theorem~\ref{theorem:simple}. Some details (exact error term and order of the
poles) are not explained by this approach.
A result ``repairing'' the zeroth order Mellin--Perron formula is known as
Landau's theorem, see \cite[\S~9]{Berthe-Lhote-Vallee:2016:probab}. It is not
applicable to our situation due to multiple poles along vertical lines which
then yield the periodic fluctuations. Instead, we prove
a theorem which provides the required
justification (not by estimating the relevant quantities, but by reducing the
problem to higher order Mellin--Perron summation). The essential assumption is
that the summatory function can be decomposed into fluctuations multiplied by
some growth factors such as in \eqref{eq:formula-X-n}.

\subsection{Relation to Previous Work}
\label{introduction:relation-to-previous-work}

Sequences defined as the output sum of transducer automata in the sense of
\cite{Heuberger-Kropf-Prodinger:2015:output} are a special case of regular
sequences; these are a generalisation of many previously studied concepts.
In that case, much more is known (variance, limiting distribution, higher
dimensional input). See \cite{Heuberger-Kropf-Prodinger:2015:output} for
references and results. A more detailed comparison can be found in Section~\ref{sec:transducer}.
Divide and Conquer recurrences (see \cite{Hwang-Janson-Tsai:2017:divide-conquer-half} and
\cite{Drmota-Szpankowski:2013:divide-and-conquer}) can also be seen as special cases of regular sequences.

The asymptotics of the summatory function of specific examples of regular
sequences has been studied in \cite{Grabner-Heuberger:2006:Number-Optimal},
\cite{Grabner-Heuberger-Prodinger:2005:counting-optimal-joint},
\cite{Dumas-Lipmaa-Wallen:2007:asymp}.

Dumas~\cite{Dumas:2013:joint, Dumas:2014:asymp} finally proved the first part
of Theorem~\ref{theorem:simple}. We re-prove it here in a self-contained way
because we need more explicit results than obtained by Dumas
(e.g., we need explicit expressions for the
fluctuations) for proving Hölder continuity and to explicitly get the precise
structure depending on the eigenspaces. Unfortunately,
Dumas' paper introduces linear representations as we do in
\eqref{eq:linear-representation}, but then the order of factors is reversed in his
equivalent of Remark~\ref{remark:regular-sequence-as-a-matrix-product}, which
means that some transpositions have to be silently introduced.

The first version of our pseudo-Tauberian argument
was provided in \cite{Flajolet-Grabner-Kirschenhofer-Prodinger:1994:mellin}:
there, no logarithmic factors were allowed and the growth conditions on the
Dirichlet series were stronger.

\def\moveplan{
We formulate the full version of our results here in
Appendix~\ref{section:results}. Formulating them will need quite a number of
definitions provided in Appendix~\ref{sec:definitions-notations}. In order to
cut straight to the results themselves, we will refrain from motivations and
comments on the definitions in Appendix~\ref{sec:definitions-notations} and
postpone those to Appendix~\ref{sec:motivation-definitions}.
Finally, the
proofs of our results will be provided in
Appendices~\ref{section:proof-contribution-of-eigenspace} to
\ref{section:proof-theorem-simple} after the very short
Appendix~\ref{additional-notation} where a few notations used throughout the proofs
are fixed.
}

\subsection*{Acknowledgement}
We thank Sara Kropf for her comments on an early version of this paper.

\def\movesectionresults{
\section{Results}\label{section:results}
\moveplan

As announced in the
introduction, we study matrix products instead of regular sequences. We will
come back to regular sequences in Appendix~\ref{section:proof-theorem-simple}.

\subsection{Problem Statement}
Let $q\ge 2$, $d\ge 1$ be fixed integers and $A_0$, \ldots,
$A_{q-1}\in\C^{d\times d}$.
We investigate the sequence $(f(n))_{n\ge 0}$ of $d\times d$ matrices such that
\begin{equation}\label{eq:regular-matrix-sequence}
  f(qn+r)=A_r f(n) \quad\text{ for $0\le r<q$, $0\le n$ with  $qn+r\neq 0$}
\end{equation}
and $f(0)=I$.

Let $n$ be an integer with $q$-ary expansion
$r_{\ell-1}\ldots r_0$. Then it is easily seen that \eqref{eq:regular-matrix-sequence} implies that
\begin{equation}\label{eq:f-as-product}
  f(n)=A_{r_0}\ldots A_{r_{\ell-1}}.
\end{equation}

We are interested in the asymptotic behaviour of $F(N)\coloneqq\sum_{0\le n<N} f(n)$.

\subsection{Definitions and Notations}\label{sec:definitions-notations}
In this section, we give all definitions and notations which are required in
order to state the results. For the sake of conciseness, we do not give any
motivations for our definitions here; those are deferred to Appendix~\ref{sec:motivation-definitions}.

The following notations are essential:
\begin{itemize}
\item
Let $\norm{\,\cdot\,}$ denote a fixed norm on $\C^d$ and its induced matrix
norm on $\C^{d\times d}$.

\item We set $B_r \coloneqq \sum_{0\le r'<r} A_{r'}$ for $0\le
r<q$ and $C\coloneqq\sum_{0\le r<q} A_r$.

\item
The joint spectral radius of $A_0$, \ldots, $A_{q-1}$ is denoted by
\begin{equation*}
  \rho\coloneqq\inf_{\ell}\sup
  \setm[\big]{ \norm{A_{r_1}\ldots A_{r_\ell}}^{1/\ell}}{r_1, \ldots, r_\ell\in\set{0, \ldots, q-1}}.
\end{equation*}
If the set of matrices $A_0$, \dots, $A_{q-1}$ has the finiteness property,
i.e., there is an $\ell>0$ such that
\begin{equation*}
  \rho = \sup
  \setm[\big]{\norm{A_{r_1}\ldots A_{r_\ell}}^{1/\ell}}{r_1, \ldots, r_\ell\in\set{0, \ldots, q-1}},
\end{equation*}
then we set $R=\rho$. Otherwise, we choose $R>\rho$ in such a way that there is
no eigenvalue $\lambda$ of $C$ with $\rho<\abs{\lambda}\le R$.

\item
The spectrum of $C$, i.e., the set of eigenvalues of $C$, is denoted by
$\sigma(C)$.

\item For a positive integer $n_0$, set
  \begin{equation*}
    \calF_{n_0}(s) \coloneqq \sum_{n\ge n_0} n^{-s}f(n)
  \end{equation*}
  for a complex variable $s$.

\item Set $\chi_k\coloneqq \frac{2\pi i k}{\log q}$ for $k\in\Z$.
\end{itemize}

In the formulation of Theorem~\ref{theorem:contribution-of-eigenspace} and
Corollary~\ref{corollary:main}, the following constants are needed
additionally:

\begin{itemize}
\item
Choose a regular matrix $T$ such that $T C T^{-1}\eqqcolon J$ is in Jordan form.

\item  Let $D$ be
the diagonal matrix whose $j$th diagonal element is $1$ if the $j$th diagonal
element of $J$ is not equal to $1$; otherwise the $j$th diagonal element of $D$
is $0$.

\item
Set $C'\coloneqq T^{-1}DJT$.

\item
Set $K\coloneqq T^{-1}DT(I-C')^{-1}(I-A_0)$.

\item
For a $\lambda\in\C$, let $m(\lambda)$ be the size of the largest
Jordan block associated with $\lambda$. In particular, $m(\lambda)=0$ if $\lambda\not\in\sigma(C)$.

\item For $m\ge 0$, set
\begin{equation*}
  \vartheta_m \coloneqq \frac1{m!}T^{-1}(I-D)T(C-I)^{m-1}(I-A_0);
\end{equation*}
here, $\vartheta_0$ remains undefined if $1\in\sigma(C)$.\footnote{
If $1\in\sigma(C)$, then the matrix $C-I$ is singular. In that case, $\vartheta_0$ will never be used.}

\item Define $\vartheta \coloneqq \vartheta_{m(1)}$.
\end{itemize}

All implicit $O$-constants depend on $q$, $d$, the matrices $A_0$, \ldots, $A_{q-1}$ (and therefore on $\rho$)
as well as on $R$.

\subsection{Decomposition into Periodic Fluctuations}
Instead of considering $F(N)$, it is certainly enough to consider $wF(N)$ for
all generalised left eigenvectors $w$ of $C$, e.g., the rows of $T$. The
result for $F(N)$ then follows by taking appropriate linear combinations.

\begin{theorem}\label{theorem:contribution-of-eigenspace}
  Let $w$ be a generalised left eigenvector of rank $m$ of $C$ corresponding to the eigenvalue $\lambda$.
  \begin{enumerate}
  \item\label{item:small-eigenvalue} If $\abs{\lambda}<R$, then
    \begin{equation*}
      wF(N)=wK + (\log_q N)^m w\vartheta_m  + \Oh{N^{\log_q R}}.
    \end{equation*}
  \item\label{item:R-eigenvalue} If $\abs{\lambda}=R$, then
    \begin{equation*}
      wF(N)=wK + (\log_q N)^m w\vartheta_m + \Oh{N^{\log_q R} (\log N)^{m}}.
    \end{equation*}
  \item\label{item:large-eigenvalue} If $\abs{\lambda}>R$, then there are $1$-periodic continuous functions
    $\Phi_k\colon \R\to\C^d$, $0\le k<m$, such that
    \begin{equation*}
      wF(N)=wK + (\log_q N)^mw\vartheta_m + N^{\log_q\lambda} \sum_{0\le k<m}(\log_q N)^k\Phi_k(\fractional{\log_q N})
    \end{equation*}
    for $N\ge q^{m-1}$. The function $\Phi_k$ is Hölder-continuous with any
    exponent smaller than  $\log_q\abs{\lambda}/R$.

    If, additionally, the left eigenvector $w(C-\lambda I)^{m-1}$ of $C$ happens to be a left eigenvector to each matrix
    $A_0$, \ldots, $A_{q-1}$ associated with the eigenvalue~$1$, then
    \begin{equation*}
      \Phi_{m-1}(u)=\frac1{q^{m-1}(m-1)!}w(C-q I)^{m-1}
    \end{equation*}
    is constant.
  \end{enumerate}
  Here, $wK=0$  for $\lambda=1$ and $w\vartheta_m=0$ for $\lambda\neq 1$.
\end{theorem}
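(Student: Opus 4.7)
The overall plan is to represent $wF(N)$ as an inverse Mellin--Perron integral of $w\calF_{n_0}(s)N^s/s$, to continue $\calF_{n_0}$ meromorphically into $\Re s>\log_q R$, and to shift the contour leftward while collecting residues at the poles $s$ with $q^s\in\sigma(C)$. Standard (order-zero) Mellin--Perron summation yields the formulae only heuristically, as in Section~\ref{sec:heuristic}, because the required vertical decay of $\calF_{n_0}$ is not available. I would therefore pass to an iterated Cesàro smoothing of the partial sums of $f$, work with a Mellin--Perron integral of sufficiently high order $p$ where absolute convergence on the shifted vertical line can be ensured, and finally invert the smoothing through a pseudo-Tauberian argument extending the one in \cite{Flajolet-Grabner-Kirschenhofer-Prodinger:1994:mellin} to allow the logarithmic factors in \eqref{eq:formula-X-n}.

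The functional equation for $\calF_{n_0}$ is obtained by splitting $\sum_{n\ge n_0}n^{-s}f(n)$ according to $n\bmod q$, using $f(qn+r)=A_rf(n)$, and Taylor-expanding $(qn+r)^{-s}=q^{-s}n^{-s}(1+r/(qn))^{-s}$. This produces a relation $(I-q^{-s}C)\calF_{n_0}(s)=H(s)$ with $H$ analytic on $\Re s>\log_q R$, giving the meromorphic continuation with poles only where $q^s\in\sigma(C)$. Shifting the contour from $\Re s>\log_q R+1$ leftward to $\Re s=\log_q R$ (slightly to its right in case \itemref{item:R-eigenvalue}) and applying the pseudo-Tauberian inversion, each pole of $w\calF_{n_0}(s)/s$ at $\log_q\lambda+\chi_\ell$ with $\abs{\lambda}>R$ contributes a term of the form $N^{\log_q\lambda}\sum_{0\le k<m}(\log_q N)^k\varphi_{k\ell}\fexp{2\ell\pi i\log_q N}$; summation over $\ell\in\Z$ assembles the Fourier expansion of the $\Phi_k$. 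For $\abs{\lambda}\le R$ the corresponding residues are absorbed in the error term, except for the contribution originating at $s=0$.

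The constants $wK$ and $w\vartheta_m$ collect exactly these residues at the origin. Under the similarity by $T$, the projector $D$ (onto non-unit eigenvalues of $C$) makes $I-C'$ invertible, and the factor $T^{-1}DT(I-C')^{-1}$ reflects the contribution of the block of $C$ without eigenvalue~$1$; the complementary projector $I-D$ isolates the generalised $1$-eigenspace, where $s=0$ becomes a pole of order $m+1$ of $w\calF_{n_0}(s)/s$, producing the $(\log_q N)^m w\vartheta_m$ summand via the factor $(C-I)^{m-1}$. The identities $wK=0$ for $\lambda=1$ and $w\vartheta_m=0$ for $\lambda\neq 1$ are immediate from $wD=0$ and $w(I-D)=0$ respectively in the Jordan basis. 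For Hölder continuity in case \itemref{item:large-eigenvalue}, I would apply the standard truncation argument: bounding the Fourier tail via the error $N^{\log_q R}(\log N)^m$ and balancing against a Lipschitz estimate for the truncated partial Fourier sum at two nearby points yields any exponent below $\log_q(\abs{\lambda}/R)$. For the final constancy claim, the hypothesis that $w(C-\lambda I)^{m-1}$ is a simultaneous $1$-eigenvector of $A_0,\dots,A_{q-1}$ makes the corresponding row satisfy $w(C-\lambda I)^{m-1}C=qw(C-\lambda I)^{m-1}$; substituting this into the leading term of the Laurent expansion of $(I-q^{-s}C)^{-1}$ at $s=\log_q\lambda$ shows that all residues at $\chi_\ell$ with $\ell\neq 0$ vanish, while the residue at $\ell=0$ evaluates to $\frac{1}{q^{m-1}(m-1)!}w(C-qI)^{m-1}$.

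The principal technical obstacle is the pseudo-Tauberian inversion in the presence of logarithmic factors and of poles of varying order along each vertical line. One must establish vertical growth estimates on $w\calF_{n_0}(s)$ strong enough to push the contour across the critical line after the $p$-fold smoothing, yet compatible with preserving the ``fluctuation times growth factor'' decomposition of the statement; this requires iterating the functional equation while carefully tracking the pole orders $m(\lambda)$ and the interaction of Jordan blocks at different eigenvalues $\lambda$ with $\abs{\lambda}\ge R$.
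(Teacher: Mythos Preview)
Your approach is fundamentally different from the paper's, and it contains a genuine circularity.

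The paper does \emph{not} prove Theorem~\ref{theorem:contribution-of-eigenspace} via Dirichlet series or Mellin--Perron summation at all. Instead, it proceeds entirely elementarily from the explicit formula of Lemma~\ref{lemma:explicit-summatory},
\[
  F(N)=\sum_{0\le j<\ell} C^j B_{r_j} A_{r_{j+1}}\dotsm A_{r_{\ell-1}} + \sum_{0\le j<\ell} C^j(I-A_0),
\]
where $r_{\ell-1}\dots r_0$ is the $q$-ary expansion of $N$. Multiplying on the left by $w$ and expanding $wC^j$ along the Jordan chain produces two pieces: the geometric sum in $C^j(I-A_0)$ is computed directly and yields exactly $wK+(\log_q N)^m w\vartheta_m$; the remaining sum is rewritten as a function $\Psi_k$ on the compact space $\Omega$ of infinite digit strings, and the H\"older estimate \eqref{eq:quasi-Hoelder} follows from the bound $\norm{A_{r_1}\cdots A_{r_\ell}}=\Oh{R^\ell}$ by truncating the infinite sum. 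A descent argument then shows $\Psi_k$ is constant on fibres of $\val\colon\Omega\to[0,1]$, giving the continuous periodic $\Phi_k$. No contour integrals, residues, or Tauberian steps are involved.

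Your proposed route has a structural gap. The pseudo-Tauberian proposition in the paper (and in \cite{Flajolet-Grabner-Kirschenhofer-Prodinger:1994:mellin}) does not invert Ces\`aro smoothing: it only says that \emph{if} $F(N)$ already has the fluctuation form, \emph{then} so does its smoothed version, with computable relations between Fourier coefficients. The paper's Theorem~\ref{theorem:use-Mellin--Perron} explicitly takes the decomposition~\eqref{eq:F-N-periodic} as a \emph{hypothesis}. You cannot therefore use it to establish that decomposition; you would need a genuine Tauberian theorem controlling $F(N)$ from its $J$-fold average, and no such control is available here (the error term would blow up under repeated differencing). This is precisely why the paper separates the argument: the elementary proof supplies the fluctuation form, and only afterwards does the Dirichlet-series machinery compute Fourier coefficients.

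Two further issues: your claim that $wK$ and $w\vartheta_m$ arise as residues of $w\calF_{n_0}(s)/s$ at $s=0$ is problematic, since the meromorphic continuation of $\calF_{n_0}$ is only established on $\Re s>\log_q\rho$, which need not contain $0$; in the paper these constants come directly from the finite sum $\sum_{j<\ell}wC^j(I-A_0)$. And deducing H\"older continuity from Fourier tail decay would require $\calF_{n_0}(s)=\Oh{\abs{\Im s}^{-\alpha}}$ along vertical lines, whereas Theorem~\ref{theorem:Dirichlet-series} only gives growth of order at most $1$; the paper obtains H\"older continuity directly from the explicit digit-string representation, not via Fourier analysis.
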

This theorem is proved in Appendix~\ref{section:proof-contribution-of-eigenspace}.
Note that in general, the three summands in the theorem have different growths:
a constant, a logarithmic term and a term whose growth depends essentially
on the joint spectral radius and the eigenvalues larger than the
joint spectral radius, respectively. The vector $w$ is not directly visible in front of
the third summand; instead, the vectors of its Jordan chain are part of the function~$\Phi_k$.

Expressing the identity matrix as linear combinations of generalised left
eigenvalues and summing up the contributions of
Theorem~\ref{theorem:contribution-of-eigenspace} essentially yields the following corollary.

\begin{corollary}\label{corollary:main}
  With the notations above, we have
  \begin{multline*}
    F(N) = \sum_{\substack{\lambda\in\sigma(C)\\\abs{\lambda}>\rho}} N^{\log_q
      \lambda}\sum_{0\le k<m(\lambda)}(\log_q N)^k\Phi_{\lambda
      k}(\fractional{\log_q N})  + (\log_q N)^{m(1)} \vartheta + K\\
    + \Oh[\big]{N^{\log_q R}(\log N)^{\max\setm{m(\lambda)}{\abs{\lambda}=R}}}
  \end{multline*}
  for suitable $1$-periodic continuous functions $\Phi_{\lambda k}$.
  If $1$ is not an eigenvalue of $C$, then $\vartheta=0$.  If
  there are no eigenvalues $\lambda\in\sigma(C)$ with $\abs{\lambda}\le \rho$,
  then the $O$-term can be omitted.

  For $\abs{\lambda}>R$, the function $\Phi_{\lambda k}$ is Hölder continuous with any exponent
  smaller than $\log_q(\abs{\lambda}/R)$.
\end{corollary}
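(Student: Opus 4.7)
The plan is to deduce the corollary from Theorem~\ref{theorem:contribution-of-eigenspace} by applying it to a basis of generalised left eigenvectors of $C$. Concretely, I take the rows $t_1,\ldots,t_d$ of the Jordan transition matrix $T$; each $t_i$ is a generalised left eigenvector of some rank $m_i$ associated to some eigenvalue $\lambda_i\in\sigma(C)$. Since $I=T^{-1}T$, we have $F(N)=T^{-1}(TF(N))$, which expresses $F(N)$ as a linear combination, with column-weights from $T^{-1}$, of the row-wise asymptotic expansions of the $t_iF(N)$.

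For each $i$ I invoke the case of Theorem~\ref{theorem:contribution-of-eigenspace} determined by $\abs{\lambda_i}$ versus $R$. By the choice of $R$ (no eigenvalues in the annulus $\rho<\abs{\lambda}\le R$), the condition $\abs{\lambda_i}>R$ is equivalent to $\abs{\lambda_i}>\rho$. Rows with $\abs{\lambda_i}>\rho$ yield the fluctuating parts; rows with $\abs{\lambda_i}\le\rho$ contribute only to the constant $K$, to the $\vartheta_{m_i}$-logarithmic terms, and to error estimates of order $\Oh{N^{\log_q R}}$ (respectively $\Oh{N^{\log_q R}(\log N)^{m_i}}$ when $\abs{\lambda_i}=R$). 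Taking the $T^{-1}$-combination preserves these bounds and produces the stated error $\Oh{N^{\log_q R}(\log N)^{\max\setm{m(\lambda)}{\abs{\lambda}=R}}}$, which disappears entirely when there are no eigenvalues with $\abs{\lambda}\le\rho$.

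The constant parts reassemble immediately via $T^{-1}(TK)=K$. For the fluctuating parts I group the rows by their eigenvalue $\lambda$ with $\abs{\lambda}>\rho$ and combine, through the relevant columns of $T^{-1}$, the periodic functions produced by the theorem into the $\Phi_{\lambda k}$ of the corollary. Hölder continuity with any exponent below $\log_q(\abs{\lambda}/R)$ transfers directly because it is preserved under finite linear combinations.

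The delicate step is collecting the logarithmic $\vartheta$-contributions, since only rows with $\lambda_i=1$ produce a nonzero $t_i\vartheta_{m_i}$. Summing the rank-$m(1)$ contributions via $T^{-1}$ reconstructs exactly $(\log_q N)^{m(1)}\vartheta$, because the factor $T^{-1}(I-D)T(C-I)^{m(1)-1}$ in the definition of $\vartheta=\vartheta_{m(1)}$ annihilates every generalised eigenvector except the top-rank ones for $\lambda=1$. The lower-rank contributions $(\log_q N)^{m_i}t_i\vartheta_{m_i}$ with $m_i<m(1)$ must then be placed elsewhere: when $R\ge 1$ they fit inside the $O$-term (for $R=1$ the exponent $\max\setm{m(\lambda)}{\abs{\lambda}=R}$ is at least $m(1)$); when $R<1$ the eigenvalue $\lambda=1$ falls under case~\itemref{item:large-eigenvalue} and these deterministic log terms become constant contributions to the $1$-periodic functions $\Phi_{1,k}$ for $k<m(1)$. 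Finally, $\vartheta=0$ when $1\notin\sigma(C)$, because then every diagonal entry of $D$ equals $1$ and so $I-D=0$. I expect this bookkeeping of lower-order logarithmic terms across the different ranges of $R$ to be the main technical nuisance of the argument.
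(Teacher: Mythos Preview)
Your proposal is correct and follows essentially the same route as the paper: decompose $F(N)$ via the rows of $T$, apply Theorem~\ref{theorem:contribution-of-eigenspace} to each, and recombine through the columns of $T^{-1}$, using the factor $(C-I)^{m(1)-1}$ in $\vartheta_{m(1)}$ to see that only the top-rank rows for $\lambda=1$ survive so that the full matrix $\vartheta$ is recovered. The paper handles the lower-rank logarithmic terms for $\lambda=1$ exactly as you suggest, folding them as constant contributions into the definition of $\Phi_{1,k}$ when $1>\rho$ (your $R<1$ case) and leaving them to the error term otherwise.
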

This corollary is proved in Appendix~\ref{section:proof:corollary-main}.

\subsection{Dirichlet Series}
This section gives the required result on the Dirichlet series~$\calF_{n_0}$. For
theoretical purposes, it is enough to study $\calF\coloneqq\calF_1$; for numerical purposes,
however, convergence improves for larger values of $n_0$.

\begin{theorem}\label{theorem:Dirichlet-series}Let $n_0$ be a positive
  integer. Then the Dirichlet series $\calF_{n_0}(s)$
  converges absolutely and uniformly on compact subsets of the half plane $\Re s > \log_q \rho + 1$, thus is analytic there.

  We have
  \begin{equation}\label{eq:analytic-continuation}
    (I-q^{-s}C)\calF_{n_0}(s) = \calG_{n_0}(s)
  \end{equation}
  for $\Re s>\log_q \rho +1$ with
  \begin{equation}\label{eq:Dirichlet-recursion}
    \calG_{n_0}(s) = \sum_{n=n_0}^{qn_0-1}n^{-s}f(n) + q^{-s}\sum_{r=0}^{q-1} A_r \sum_{k\ge
      1} \binom{-s}{k}\Bigl(\frac{r}{q}\Bigr)^k \calF_{n_0}(s+k).
  \end{equation}
  The series in \eqref{eq:Dirichlet-recursion} converge
  absolutely and uniformly on compact sets for $\Re s>\log_q \rho$. Thus \eqref{eq:analytic-continuation} gives a meromorphic
  continuation of $\calF_{n_0}$ to the half plane $\Re s>\log_q \rho$ with
  possible poles at $s=\log_q \lambda + \chi_\ell$ for each
  $\lambda\in \sigma(C)$ with $\abs{\lambda}>\rho$ and $\ell\in\Z$
  whose pole order is at most $m(\lambda)$.

  Let $\delta>0$. For real $z$, we set
  \begin{equation*}
    \mu_\delta(z)= \max\set{ 1 - (z-\log_q \rho -\delta), 0},
  \end{equation*}
  i.e., the linear function on the interval
  $[\log_q\rho+\delta, \log_q\rho+\delta+1]$
  with~$\mu_\delta(\log_q\rho+\delta)=1$ and~$\mu_\delta(\log_q\rho+\delta+1)=0$.
  Then
  \begin{equation}\label{eq:order-F}
    \calF_{n_0}(s) = \Oh[\big]{\abs{\Im s}^{\mu_\delta(\Re s)}}
  \end{equation}
  holds uniformly for $\log_q \rho+\delta\le \Re s$ and $\abs{q^s-\lambda} \ge \delta$
  for all eigenvalues $\lambda\in\sigma(C)$. Here, the implicit $O$-constant
  also depends on $\delta$.
\end{theorem}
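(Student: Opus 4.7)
The plan is to establish the four assertions---absolute convergence, the functional equation, the meromorphic continuation with pole structure, and finally the growth estimate~\eqref{eq:order-F}---in this order, each feeding into the next.

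For absolute convergence on $\Re s > \log_q \rho + 1$, the product formula~\eqref{eq:f-as-product} and the definition of the joint spectral radius give $\norm{f(n)} = \Oh{n^{\log_q R}}$ for any $R > \rho$, so $\sum_n n^{-\Re s} \norm{f(n)}$ is dominated by $\sum n^{\log_q R - \Re s}$; letting $R \downarrow \rho$ yields the claim. For the functional equation, I split $\calF_{n_0}(s)$ into $\sum_{n=n_0}^{qn_0-1}$ and the tail $\sum_{n \geq qn_0}$, write each tail term uniquely as $n = qm + r$ with $m \geq n_0$ and $0 \leq r < q$, apply $f(qm+r) = A_r f(m)$ from~\eqref{eq:regular-matrix-sequence}, and expand
\[
(qm+r)^{-s} = q^{-s} m^{-s}\sum_{k \geq 0}\binom{-s}{k}\Bigl(\frac{r}{qm}\Bigr)^k
\]
(valid since $r/(qm) < 1$). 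Interchanging summations (everything being absolutely convergent for $\Re s > \log_q \rho + 1$) isolates the $k = 0$ contribution $q^{-s} C \calF_{n_0}(s)$; moving it to the left produces~\eqref{eq:analytic-continuation} with~\eqref{eq:Dirichlet-recursion}.

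To extend the validity to $\Re s > \log_q \rho$, I reverse the above interchange to rewrite
\[
\sum_{k \geq 1}\binom{-s}{k}\Bigl(\frac{r}{q}\Bigr)^k \calF_{n_0}(s+k) = \sum_{m \geq n_0} m^{-s} f(m)\Bigl[\bigl(1 + \tfrac{r}{qm}\bigr)^{-s} - 1\Bigr].
\]
The identity $(qm+r)^{-s} - (qm)^{-s} = -s \int_{qm}^{qm+r} t^{-s-1}\,\dd t$ gives $\abs{(1+r/(qm))^{-s} - 1} = \Oh{\abs{s}/m}$, so the right-hand side is majorised by $\abs{s}\sum_m m^{-\Re s - 1}\norm{f(m)}$, which converges absolutely and locally uniformly on $\Re s > \log_q \rho$. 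Hence $\calG_{n_0}$ is holomorphic there. Since $(I - q^{-s}C)^{-1} = -q^s(C - q^s I)^{-1}$ is meromorphic on $\C$ with poles exactly where $q^s \in \sigma(C)$ and of order equal to the size of the largest Jordan block at the corresponding eigenvalue, formula~\eqref{eq:analytic-continuation} supplies the meromorphic continuation of $\calF_{n_0}$ with the stated pole locations and orders.

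The main obstacle is~\eqref{eq:order-F}. On the right line $\Re s = \log_q \rho + \delta + 1$, absolute convergence gives $\calF_{n_0}(s) = \Oh{1}$, matching $\mu_\delta = 0$. On the left line $\Re s = \log_q \rho + \delta$, the rewriting above yields $\norm{\calG_{n_0}(s)} = \Oh{\abs{s}}$, while $\norm{(I - q^{-s}C)^{-1}}$ is uniformly bounded on the region $\log_q \rho + \delta \leq \Re s$, $\abs{q^s - \lambda} \geq \delta$ (since $\abs{q^{-s}}$ is controlled in any vertical strip and tends to zero further right, while the uniform gap $\delta$ from each eigenvalue of $C$ controls the inverse via the Jordan form), giving $\calF_{n_0}(s) = \Oh{\abs{\Im s}}$ on this line. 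A Phragmén--Lindelöf argument on the strip $\log_q \rho + \delta \leq \Re s \leq \log_q \rho + \delta + 1$---applied after excising the $\delta$-neighbourhoods of the finitely many poles in any bounded horizontal slab, as the hypothesis permits---interpolates the two bounds linearly in $\Re s$, producing precisely the exponent $\mu_\delta(\Re s)$. For $\Re s \geq \log_q \rho + \delta + 1$ the bound ($\mu_\delta = 0$) is immediate from absolute convergence.
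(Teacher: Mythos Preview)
Your argument is essentially the paper's own proof: same convergence argument, same derivation of the functional equation via the $q$-adic splitting and binomial expansion, same integral estimate $(qm+r)^{-s}-(qm)^{-s}=\Oh{\abs{s}m^{-\Re s-1}}$ to push analyticity of $\calG_{n_0}$ to $\Re s>\log_q\rho$, and the same Phragm\'en--Lindel\"of/Lindel\"of interpolation for the growth bound. The pole-order argument differs slightly (the paper inducts on the rank of a generalised left eigenvector, you invoke the Jordan form of $(I-q^{-s}C)^{-1}$), but both are fine.

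There is, however, one genuine slip in the last paragraph. You apply Phragm\'en--Lindel\"of directly to $\calF_{n_0}$ on the strip $\log_q\rho+\delta\le\Re s\le\log_q\rho+\delta+1$, proposing to ``excise the $\delta$-neighbourhoods of the finitely many poles in any bounded horizontal slab''. That is not how Phragm\'en--Lindel\"of works: the theorem needs an analytic function on the full strip (with finite order there), and $\calF_{n_0}$ has infinitely many poles along each vertical line $\Re s=\log_q\abs{\lambda}$ inside the strip. Excising neighbourhoods leaves a domain on which no maximum-modulus interpolation is available without additional boundary information on the excised circles---precisely where the function blows up.

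The fix is immediate with what you already have: apply Lindel\"of's theorem to $\calG_{n_0}$, not to $\calF_{n_0}$. You have shown $\calG_{n_0}$ is analytic for $\Re s>\log_q\rho$, that $\calG_{n_0}(s)=\Oh{\abs{s}}$ uniformly in the strip (from your integral bound), and that $\calG_{n_0}(s)=\Oh{1}$ on the right edge (absolute convergence of $\calF_{n_0}$). Lindel\"of then gives $\calG_{n_0}(s)=\Oh{\abs{\Im s}^{\mu_\delta(\Re s)}}$ in the strip. Finally multiply by $(I-q^{-s}C)^{-1}$, which you have already argued is uniformly bounded on $\set{\Re s\ge\log_q\rho+\delta}\cap\set{\abs{q^s-\lambda}\ge\delta\text{ for all }\lambda\in\sigma(C)}$, to obtain~\eqref{eq:order-F}. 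This is exactly the route the paper takes (via its Lemma on shifted Dirichlet series).
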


\begin{remark}\label{remark:Dirichlet-series:bound}
  By the identity theorem for analytic functions, the meromorphic
  continuation of $\calF_{n_0}$ is unique on the domain given in the
  theorem. Therefore, the bound~\eqref{eq:order-F} does not depend on
  the particular expression for the meromorphic continuation given
  in~\eqref{eq:analytic-continuation}
  and~\eqref{eq:Dirichlet-recursion}.
\end{remark}

Theorem~\ref{theorem:Dirichlet-series} is proved in
Appendix~\ref{section:proof:Dirichlet-series}. In the proof we need
Dirichlet series like $\sum_{n\ge n_0} d(n)\, (n+\beta)^{-s}$ and their
differences to standard Dirichlet series $\sum_{n\ge n_0} d(n)\, n^{-s}$.
The following lemma provides some insights. It will turn out
to be useful to have it as result listed in this section and not
buried in the proofs sections.

\begin{lemma}\label{lemma:shifted-Dirichlet}
  Let $\calD(s) = \sum_{n \ge n_0} d(n)/n^s$ be a Dirichlet series with
  coefficients $d(n)=\Oh{n^{-\log_q R'}}$ for all $R'>\rho$.
  Let $\beta\in\C$ with $\abs{\beta}<n_0$ and $\delta>0$. Set
  \begin{equation*}
    \f{\Sigma}{s, \beta, \calD} \coloneqq
    \biggl(\, \sum_{n\ge n_0} \frac{d(n)}{(n+\beta)^s} \biggr) - \calD(s).
  \end{equation*}
  Then
  \begin{equation*}
    \f{\Sigma}{s, \beta, \calD} = \sum_{k\ge 1}
    \binom{-s}{k} \beta^k \calD(s+k),
  \end{equation*}
  where the series converges
  absolutely and uniformly on compact sets for $\Re s>\log_q \rho$,
  thus is analytic there.
  Moreover, with $\mu_\delta$ as in Theorem~\ref{theorem:Dirichlet-series},
  \begin{equation*}
    \f{\Sigma}{s, \beta, \calD}=\Oh[\big]{\abs{\Im s}^{\mu_\delta(\Re s)}}
  \end{equation*}
  as $\abs{\Im s}\to\infty$
  holds uniformly for $\log_q \rho + \delta\le \Re s\le \log_q \rho +\delta+1$.
\end{lemma}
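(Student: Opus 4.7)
The plan proceeds in three phases: derive the series identity, verify convergence, and obtain the $\Oh{\,\cdot\,}$ bound.

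For the identity, since $\abs{\beta}/n \le \abs{\beta}/n_0 < 1$ for every $n\ge n_0$, the generalized binomial theorem yields the pointwise expansion
\[
(n+\beta)^{-s} - n^{-s} = n^{-s}\sum_{k\ge 1}\binom{-s}{k}\Bigl(\frac{\beta}{n}\Bigr)^k.
\]
Summing over $n\ge n_0$ in the region $\Re s > \log_q\rho + 1$ (where $\calD$ converges absolutely) and swapping the two summations yields $\f{\Sigma}{s, \beta, \calD} = \sum_{k\ge 1}\binom{-s}{k}\beta^k\calD(s+k)$.

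For convergence on the larger half-plane $\Re s > \log_q\rho$, note that $\Re(s+k) > \log_q\rho + 1$ for every $k\ge 1$, so each $\calD(s+k)$ converges absolutely. Choosing $R'>\rho$ with $\log_q R' < \Re s$ and using $\abs{d(n)} \le C_{R'}n^{\log_q R'}$, the series $\sum_{n\ge n_0}n^{\log_q R' - \Re s - k}$ is dominated by its $n=n_0$ term for large $k$, giving $\abs{\calD(s+k)} = \Oh{n_0^{-k}}$. Combined with the polynomial-in-$k$ bound $\abs{\binom{-s}{k}} \le \binom{\abs{s}+k-1}{k}$ and the geometric factor $(\abs{\beta}/n_0)^k$, the series of the identity converges absolutely and uniformly on compact subsets of $\Re s > \log_q\rho$, hence defines an analytic function there.

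For the $\Oh{\,\cdot\,}$ bound I would work in the strip $\log_q\rho+\delta \le \Re s \le \log_q\rho+\delta+1$, establish estimates on its two edges, and interpolate via Phragm\'en--Lindel\"of. On the right edge $\Re s = \log_q\rho+\delta+1$, $\calD$ lies in its half-plane of absolute convergence; splitting the $n$-sum at $n \asymp \abs{s}$, using $\abs{(n+\beta)^{-s} - n^{-s}} = \Oh{n^{-\Re s}}$ on the small-$n$ piece and the Taylor estimate $\abs{(n+\beta)^{-s} - n^{-s}} \le C\abs{s\beta}/n^{\Re s+1}$ on the large-$n$ piece, yields $\f{\Sigma}{s, \beta, \calD} = \Oh{1}$ uniformly in $\Im s$. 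On the left edge $\Re s = \log_q\rho+\delta$, differentiating in $\beta$ produces the recursion
\[
\f{\Sigma}{s, \beta, \calD} = -s\beta\calD(s+1) - s\int_0^\beta \f{\Sigma}{s+1, t, \calD}\,\dd t.
\]
Since $\Re(s+1)$ is precisely the right edge of the strip, both $\calD(s+1)$ and $\f{\Sigma}{s+1, t, \calD}$ are $\Oh{1}$ uniformly in $\Im s$ and uniformly in $t\in[0,\beta]$, giving $\f{\Sigma}{s, \beta, \calD} = \Oh{\abs{s\beta}} = \Oh{\abs{\Im s}}$. The function $\f{\Sigma}{\,\cdot\,,\beta,\calD}$ is analytic in the strip and of at most polynomial growth in $\abs{\Im s}$ there, so Phragm\'en--Lindel\"of convexity linearly interpolates the exponent from $1$ on the left edge to $0$ on the right edge, delivering exactly the exponent $\mu_\delta(\Re s)$ in between.

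The main obstacle is the right-edge bound when $\beta\in\C$ is complex: the factor $e^{\Im s\cdot\arg(n+\beta)}$ hidden inside $\abs{(n+\beta)^{-s}}$ can be exponentially large in $\abs{\Im s}$ for small $n$, so a naive absolute bound on $\sum_n d(n)(n+\beta)^{-s}$ is not uniform in $\Im s$. I would handle this by exploiting $\abs{\arg(n+\beta)} = \Oh{1/n}$ uniformly in $n\ge n_0$ (so that the exponential factor stays controlled once $n\gtrsim\abs{\Im s}$) and by treating the remaining small-$n$ contribution via the series representation, where the geometric decay of $\calD(s+k)$ in $k$ balances the otherwise exponentially growing factors $\abs{\binom{-s}{k}}$. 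For the real $\beta = r/q$ appearing in the subsequent application to Theorem~\ref{theorem:Dirichlet-series}, this subtlety does not arise.
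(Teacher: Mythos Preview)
Your proposal is correct and follows essentially the same route as the paper: establish the binomial-series identity, prove absolute and locally uniform convergence on $\Re s>\log_q\rho$ via the decay of $\calD(s+k)$, and then obtain the growth bound by interpolation (Phragm\'en--Lindel\"of / Lindel\"of's convexity theorem for the order of Dirichlet series) between the two edges of the strip.

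The paper is more economical on the edge bounds. It uses the single mean-value estimate
\[
(n+\beta)^{-s}-n^{-s}=\Oh{\abs{s}\,n^{-\Re s-1}}
\]
term by term to get $\f{\Sigma}{s,\beta,\calD}=\Oh{\abs{\Im s}}$ uniformly on the whole strip in one stroke, and obtains the $\Oh{1}$ bound on the right edge simply from absolute convergence of $\calD$ and the shifted series there---no splitting of the $n$-sum at $n\asymp\abs{s}$ and no recursion in $\beta$ are needed. Your integral recursion $\f{\Sigma}{s,\beta,\calD}=-s\beta\,\calD(s+1)-s\int_0^\beta \f{\Sigma}{s+1,t,\calD}\,\dd t$ is a nice device and does the job (and, incidentally, already furnishes the polynomial a~priori growth you need for Phragm\'en--Lindel\"of throughout the strip), but it is more machinery than the paper invokes. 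Your final paragraph is perceptive: the paper's mean-value estimate is only immediately uniform in $\Im s$ when $\beta$ is real, and indeed every application in the paper has $\beta\in\R$ (namely $\beta=r/q$, $\pm\tfrac12$, or $1$).
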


\subsection{Fourier Coefficients}
As discussed in Section~\ref{sec:heuristic}, we would like to apply the zeroth
order Mellin--Perron summation formula but need analytic justification. In the
following theorem we prove that whenever it is known that the result is a
periodic fluctuation, the use of zeroth order Mellin--Perron summation can be
justified. In contrast to the remaining paper, this theorem does \emph{not}
assume that $f(n)$ is a matrix product.

\begin{theorem}\label{theorem:use-Mellin--Perron}
  Let $f(n)$ be a sequence, let $\kappa_0\in\R\setminus\set{0}$ and $\kappa\in\C$ with $\Re \kappa>
  \kappa_0 > -1$, $\delta>0$,
  $q>1$ be real numbers with $\delta \le \pi/(\log q)$
  and $\delta < \Re \kappa-\kappa_0$,
  and let $m$ be a positive integer. Moreover, let $\Phi_k$ be
  Hölder-continuous (with exponent $\alpha$ with
  $\Re\kappa-\kappa_0<\alpha\le 1$) $1$-periodic functions for $0\le k<m$ such that
  \begin{equation}\label{eq:F-N-periodic}
    F(N)\coloneqq \sum_{1\le n< N} f(n) = \sum_{0\le k<m}N^\kappa (\log_q N)^k
    \Phi_{k}(\fractional{\log_q N}) + \Oh{N^{\kappa_0}}
  \end{equation}
  for integers $N\to\infty$.

  For the Dirichlet series $\calF(s)\coloneqq \sum_{n\ge 1}n^{-s}f(n)$
  assume that
  \begin{itemize}
  \item there is some real number $\sigma_a\ge \Re \kappa$ such that $\calF(s)$ converges absolutely for $\Re s>\sigma_a$;
  \item  the Dirichlet series $\calF(s)$
    can be continued to a meromorphic function for $\Re s > \kappa_0-\delta$
    such that poles can only occur at $\kappa+\chi_\ell$ for $\ell\in\Z$ and such that these poles have order at
    most $m$;
  \item there is some real number~$\eta>0$ such that
    for $\kappa_0 \le \Re s  \le \sigma_a$ and
    $\abs{s-\kappa-\chi_\ell}\ge \delta$ for all $\ell\in\Z$, we have
    \begin{equation}\label{eq:Dirichlet-order}
      \calF(s) = \Oh[\big]{\abs{\Im s}^{\eta}}
    \end{equation}
    for $\abs{\Im s}\to\infty$.
  \end{itemize}
  All implicit $O$-constants may depend on $f$, $q$, $m$, $\kappa$, $\kappa_0$,
  $\alpha$, $\delta$, $\sigma_a$ and $\eta$.

  Then
  \begin{equation*}
    \Phi_k(u) = \sum_{\ell\in \Z}\varphi_{k\ell}\exp(2\ell\pi i u)
  \end{equation*}
  for $u\in\R$ where
  \begin{equation}\label{eq:Fourier-coefficient}
    \varphi_{k\ell} = \frac{(\log q)^k}{k!}
    \Res[\Big]{\frac{\calF(s)(s-\kappa-\chi_\ell)^k}{s}}%
    {s=\kappa+\chi_\ell}
  \end{equation}
  for $\ell\in\Z$ and $0\le k<m$.

  If $-1<\kappa_0<0$ and $\kappa\notin \frac{2\pi i}{\log q}\Z$, then $\calF(0)=0$.
\end{theorem}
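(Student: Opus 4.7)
\medskip

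The plan is to circumvent the convergence failure of the zeroth-order Mellin–Perron formula by invoking it at a sufficiently high order. I would fix a positive integer $K > \eta$, large enough that the hypothesised growth bound \eqref{eq:Dirichlet-order} combined with the extra denominator gives
\[
\frac{\calF(s)}{s(s+1)\dotsm(s+K)} = \Oh[\big]{\abs{\Im s}^{\eta-K-1}}
\]
uniformly on vertical strips avoiding small discs around $\set{\kappa+\chi_\ell}_{\ell\in\Z}$ and $\set{0,-1,\dotsc,-K}$, so that the integrand decays integrably along vertical lines. For a real $c>\sigma_a$ the standard $K$th order Mellin–Perron identity then gives the rigorous representation
\[
\tildeA_K(N) \coloneqq \sum_{1\le n<N} f(n)\frac{(N-n)^K}{K!}
= \frac{1}{2\pi i}\int_{c-i\infty}^{c+i\infty} \calF(s)\frac{N^{s+K}}{s(s+1)\dotsm(s+K)}\,\dd s.
\]

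Next I would shift the contour leftwards to $\Re s = \kappa_0-\delta/2$, which lies strictly to the left of the column of poles $\set{\kappa+\chi_\ell}_{\ell\in\Z}$ (and to the left of $s=0$ when $\kappa_0<0$). Using the decay of the integrand, the horizontal connectors contribute $\oh{1}$ as they are sent to $\pm i\infty$, the sum of residues over $\ell\in\Z$ converges absolutely, and the shifted vertical integral is $\Oh{N^{\kappa_0-\delta/2+K}}$. Each residue at $s=\kappa+\chi_\ell$ can be expanded explicitly: writing the Laurent series of $\calF(s)/s$ at $\kappa+\chi_\ell$ as $\sum_{j\ge -m}a_{j\ell}(s-\kappa-\chi_\ell)^j$ and noting that $N^{s+K}=N^{\kappa+\chi_\ell+K}\sum_{j\ge 0}\frac{(\log N)^j}{j!}(s-\kappa-\chi_\ell)^j$, the residue evaluates to
\[
N^{\kappa+\chi_\ell+K}\sum_{0\le k<m}\frac{(\log N)^k}{k!}\,P_{k\ell}(N)
\]
with leading term $N^{\kappa+\chi_\ell+K}\sum_k a_{-k-1,\ell}(\log N)^k/k!$, i.e.\ the quantities $\varphi_{k\ell}$ of \eqref{eq:Fourier-coefficient} times $N^{\kappa+\chi_\ell+K}(\log_q N)^k$, plus polynomial-in-$N^{-1}$ corrections coming from expanding $1/[(s+1)\dotsm(s+K)]$ about $s=\kappa+\chi_\ell$.

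On the other hand, I would compute $\tildeA_K(N)$ independently, starting from the hypothesised expansion \eqref{eq:F-N-periodic} of $F(N)$. Iterated Abel summation converts $\tildeA_K$ into a $K$-fold telescoping sum of $F$; substituting $F(N)=N^\kappa\sum_k(\log_q N)^k\Phi_k(\fractional{\log_q N})+\Oh{N^{\kappa_0}}$ and using the H\"older continuity of $\Phi_k$ (to control the discrete-vs-continuous summation error via an Euler–Maclaurin-type estimate applied piecewise on intervals $[q^j,q^{j+1})$ where $\fractional{\log_q N}$ traverses $[0,1)$), produces the same asymptotic shape $N^{\kappa+K}\sum_k(\log_q N)^k\Psitilde_k(\fractional{\log_q N})+\Oh{N^{\kappa_0+K}}$ with fluctuations $\Psitilde_k$ that are explicit linear combinations of the $\Phi_j$. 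Matching the two expressions and invoking uniqueness of the Fourier expansion on each fixed power $(\log_q N)^k N^{\kappa+K}$ (possible because $\ell\mapsto e^{2\pi i\ell u}$ is an orthonormal basis and the $\Phi_k$ are continuous) identifies the Fourier coefficients as claimed in \eqref{eq:Fourier-coefficient}. The final assertion $\calF(0)=0$ then drops out because, under the stated hypotheses, $s=0$ is a simple pole of $N^{s+K}/[s(s+1)\dotsm(s+K)]$ with residue $\calF(0)N^K/K!$, whereas the independent computation yields only terms of order $N^{\kappa+K}$ and $\Oh{N^{\kappa_0+K}}$, both strictly smaller than $N^K$ (the former because $\kappa\notin\chi_\Z$ prevents a pure $N^K$ contribution from the fluctuations, the latter because $\kappa_0<0$).

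The main obstacle I foresee is the justification of the contour shift together with the termwise integration/residue summation: ensuring that the horizontal tails genuinely vanish and that the sum $\sum_\ell$ of residues converges to the $\tildeA_K(N)$-integral requires a careful uniform version of \eqref{eq:Dirichlet-order} on horizontal segments, and the subsequent inversion from $\tildeA_K(N)$ back to the coefficients of $F(N)$ via iterated Abel summation must preserve enough regularity of the fluctuations so that Fourier uniqueness can actually be applied. This regularity is exactly where the H\"older hypothesis on $\Phi_k$ with exponent $\alpha>\Re\kappa-\kappa_0$ is essential, since it controls the error incurred when each Abel-summation step replaces a Riemann sum of $\Phi_k$ by its integral.
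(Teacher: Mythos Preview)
Your overall strategy is exactly the paper's: pass to a $K$th-order Mellin--Perron formula with $K>\eta$, shift the contour, and compare with an independent computation of the iterated summatory function obtained from \eqref{eq:F-N-periodic} via a pseudo-Tauberian argument. One minor slip: you shift to $\Re s=\kappa_0-\delta/2$, but \eqref{eq:Dirichlet-order} is only assumed on $\kappa_0\le\Re s\le\sigma_a$, so nothing controls $\calF$ on that line; the paper shifts only to $\Re s=\kappa_0$, which already crosses $s=0$ whenever $\kappa_0<0$ and gives the remainder bound $\Oh{N^{\kappa_0+K}}$.

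The substantive gap is your claim that matching ``identifies the Fourier coefficients as claimed in \eqref{eq:Fourier-coefficient}''. What the matching actually yields is that the $\ell$th Fourier coefficient of the level-$K$ fluctuation $\Psitilde_k$ equals
\[
\xi_{k\ell}=\frac{(\log q)^k}{k!}\Res[\Big]{\frac{\calF(s)(s-\kappa-\chi_\ell)^k}{s(s+1)\dotsm(s+K)}}{s=\kappa+\chi_\ell},
\]
a residue with the full denominator $s(s+1)\dotsm(s+K)$, not $s$ alone. Your ``polynomial-in-$N^{-1}$ corrections'' is a misreading: the Taylor expansion of $1/[(s+1)\dotsm(s+K)]$ about $\kappa+\chi_\ell$ does not lower the power of $N$; it keeps all contributions at the same scale $N^{\kappa+K}(\log N)^k$ while mixing the Laurent coefficients $a_{-1,\ell},\dots,a_{-m,\ell}$ of $\calF(s)/s$. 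Getting from $\xi_{k\ell}$ back to $\varphi_{k\ell}$ is a genuine computation that your sketch does not supply. In the paper it comes from the pseudo-Tauberian step (Proposition~\ref{proposition:pseudo-Tauber:combination}), which produces the recurrence
\[
\psi_{jk\ell}=(\kappa+j+1+\chi_\ell)\psi_{(j+1)k\ell}+\frac{k+1}{\log q}\psi_{(j+1)(k+1)\ell}
\]
between Fourier coefficients of the fluctuations at successive levels~$j$ and~$j+1$. Backward induction on $j$ using this recurrence (and the identity $(s_\ell+j+1)+(s-s_\ell)=s+j+1$) shows
\[
\psi_{jk\ell}=\frac{(\log q)^k}{k!}\Res[\Big]{\frac{\calF(s)(s-\kappa-\chi_\ell)^k}{s(s+1)\dotsm(s+j)}}{s=\kappa+\chi_\ell},
\]
stripping one factor $(s+j+1)$ from the denominator at each step until at $j=0$ one lands on \eqref{eq:Fourier-coefficient}. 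Without this identity or an equivalent, the final formula is not reached.
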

This theorem is proved in Appendix~\ref{section:proof:use-Mellin--Perron}.
The theorem is more general than necessary for $q$-regular sequences because
Theorem~\ref{theorem:Dirichlet-series} shows that we could use some $0<\eta<1$.
However, it might be applicable in other cases, so we prefer to state it in
this more general form.

\section{Remarks on the Definitions}\label{sec:motivation-definitions}
In this section, we give some motivation for and comments on the definitions listed in
Appendix~\ref{sec:definitions-notations}.

\subsection{\texorpdfstring{$q$}{q}-Regular Sequences vs.\ Matrix Products}\label{section:q-regular-matrix-product}
We note one significant difference between the study of $q$-regular sequences
as in \eqref{eq:linear-representation} and the study of matrix
products~\eqref{eq:f-as-product}.
The recurrence \eqref{eq:linear-representation} is supposed to hold for
$qn+r=0$, too; i.e. $v(0)=A_0v(0)$. This implies that $v(0)$ is either the zero
vector (which is not interesting at all) or that $v(0)$ is a right eigenvector of
$A_0$ associated with the eigenvalue~$1$.

We do not want to impose this condition in the study of the matrix
product~\eqref{eq:f-as-product}. Therefore, we exclude the case $qn+r=0$ in
\eqref{eq:regular-matrix-sequence}.
This comes at the price of the terms $K$,
$\vartheta_m$, $\vartheta$ in Theorem~\ref{theorem:contribution-of-eigenspace} which
vanish if multiplied by a right eigenvector to the eigenvalue $1$ of $A_0$ from the
right. This is the reason why Theorem~\ref{theorem:simple} has simpler
expressions than those encountered in Theorem~\ref{theorem:contribution-of-eigenspace}.

\subsection{Joint Spectral Radius}
Let
\begin{equation*}
\rho_\ell\coloneqq \sup
\setm[\big]{\norm{A_{r_1}\ldots A_{r_\ell}}^{1/\ell}}{r_1, \ldots, r_\ell\in\set{0, \ldots, q-1}}.
\end{equation*}
Then the
submultiplicativity of the norm and Fekete's subadditivity lemma~\cite{Fekete:1923:ueber-verteil} imply that
$\lim_{\ell\to\infty}\rho_\ell=\inf_{\ell>0}\rho_{\ell}=\rho$,
cf.~\cite{Rota-Strang:1960}. In view of equivalence of norms, this shows that
the joint spectral radius does not depend on the chosen norm. For our purposes,
the important point is that the choice of $R$ ensures that there is an
$\ell_0>0$ such that $\rho_{\ell_0}\le R$, i.e., $\norm{A_{r_1}\ldots
A_{r_{\ell_0}}}\le R^{\ell_0}$ for all $r_j\in\set{0,\ldots, q-1}$. For any $\ell>0$, we use long division to write
$\ell=s\ell_0+r$ and by submultiplicativity of the norm, we get $\norm{A_{r_1}\ldots
    A_{r_\ell}}\le R^{s\ell_0} \rho_{r}^r$ and thus
\begin{equation}\label{eq:bound-prod}
  \norm{A_{r_1}\ldots
    A_{r_\ell}}=\Oh{R^{\ell}}
\end{equation}
for all $r_j\in\set{0,\ldots,q-1}$ and $\ell\to\infty$. We will only use
\eqref{eq:bound-prod} and no further properties of the joint spectral radius.
Note that~\eqref{eq:f-as-product} and \eqref{eq:bound-prod} imply that
\[f(n)=\Oh{R^{\log_q n}}=\Oh{n^{\log_q R}}\]
for $n\to\infty$.

As mentioned, we say that the set of matrices $A_0$, \dots, $A_{q-1}$,
has the \emph{finiteness property} if there is an $\ell>0$ with
$\rho_\ell=\rho$; see~\cite{Jungers:2009:joint-spectral-radius,
  Lagarias-Wang:1995:finiteness-conjecture-jsr}.

\subsection{Constants for Theorem~\ref{theorem:contribution-of-eigenspace}}\label{section:constants-for-theorem}
In contrast to usual conventions, we write matrix representations of
endomorphisms as multiplications $x\mapsto xM$ where $x$ is a (row) vector in
$\C^d$ and $M$ is a matrix. Note that we usually denote this endomorphism
by the corresponding calligraphic letter, for example, the endomorphism represented
by the matrix~$M$ is denoted by $\calM$.

Consider the endomorphism $\calC$ which maps a row vector $x\in\C^d$ to $xC$
and its generalised eigenspaces $W_\lambda$ for $\lambda\in\C$.  (These are the
generalised left eigenspaces of $C$. If $\lambda\notin\sigma(C)$, then
$W_\lambda=\set{0}$.) Then it is well-known that $\calC\rvert_{W_\lambda}$ is an
endomorphism of $W_\lambda$ and that
$\C^d=\bigoplus_{\lambda\in\sigma(C)}W_\lambda$. Let $\calT$ be the basis formed by
the rows of $T$. Then the matrix representation of $\calC$
with respect to~$\calT$ is $J$.

Let now $\calD$ be the endomorphism of $\C^d$ which acts as identity on
$W_\lambda$ for $\lambda\neq 1$ and as zero on $W_1$. Its matrix representation
with respect to the basis $\calT$ is $D$; its matrix representation with
respect to the standard basis is $T^{-1}DT$.

Finally, let $\calC'$ be the endomorphism $\calC'=\calC \circ \calD$. As
$\calC$ and $\calD$ decompose along
$\C^d=\bigoplus_{\lambda\in\sigma(C)}W_\lambda$ and $\calD$ commutes with every
other endomorphism on $W_\lambda$ for all $\lambda$, we clearly also have
$\calC'=\calD\circ\calC$. Thus the matrix representation of $\calC'$ with
respect to $\calT$ is $DJ=JD$; its matrix representation with respect to the
standard basis is $T^{-1}DJT=C'$.

Now consider a generalised left eigenvector $w$ of $C$.  If it is associated to the eigenvalue $1$, then $w
T^{-1}DT=\calD(w)=0$, $wK=0$ and $wC'=\calC'(w)=0$. Otherwise, that is, if $w$ is associated to an eigenvalue not equal to $1$, we have $wT^{-1}DT=\calD(w)=w$,
$wC'=\calC'(w)=\calC(w)=wC$,
$w{C'}^j={\calC'}^j(w)=\calC^j(w)=wC^j$ for $j\ge 0$ and $w\vartheta_m=0$. Also note that
$1$ is not an eigenvalue of $C'$, thus $I-C'$ is indeed regular.
If $1$ is not an eigenvalue of $C$, then everything is simpler:
$D$ is the identity matrix, $C'=C$, $K=(I-C)^{-1}(I-A_0)$ and $\vartheta=0$.

} 

\section{Sequences Defined by Transducer Automata}
\label{sec:transducer}
Let $q\ge 2$ be a positive integer. We consider a complete deterministic
subsequential transducer  $\calT$ with input alphabet $\set{0, \ldots, q-1}$
and output alphabet $\C$, see \cite[Chapter~1]{Berthe-Rigo:2010:combin}.
Recall that a transducer is said to be \emph{deterministic} and \emph{complete}
if for every state and every digit of the input alphabet, there is exactly one
transition starting in this state with this input label. A \emph{subsequential}
transducer has a final output label for every state.

For a non-negative integer $n$, let $\calT(n)$ be the sum of the output labels
(including the final output label) encountered when the transducer reads the
$q$ary expansion of $n$. This concept has been thoroughly studied in
\cite{Heuberger-Kropf-Prodinger:2015:output}: there, $\calT(n)$ is considered
as a random variable defined on the probability space $\set{0, \ldots, N-1}$
equipped with uniform distribution. The expectation in this model corresponds
(up to a factor of~$N$) to our summatory function $\sum_{0\le n<N}\calT(n)$.
We remark that in \cite{Heuberger-Kropf-Prodinger:2015:output}, the variance
and limiting distribution of the random variable $\calT(n)$ have also been
investigated. Most of the results there are also valid for higher dimensional input.

The purpose of this section is to show that $\calT(n)$ is a $q$-regular
sequence and to see that our results here coincide with the corresponding
results in \cite{Heuberger-Kropf-Prodinger:2015:output}. We note that the
binary sum of digits considered in Example~\ref{example:binary-sum-of-digits}
is the special case of $q=2$ and the transducer consisting of a single state
which implements the identity map. For additional special cases of this
concept, see \cite{Heuberger-Kropf-Prodinger:2015:output}. Note that our result
here for the summatory function contains (fluctuating) terms for all
eigenvalues $\lambda$ of the adjacency matrix of the underlying digraph with
$1<\abs{\lambda}$ whereas in  \cite{Heuberger-Kropf-Prodinger:2015:output} only
contributions of those eigenvalues $\lambda$ with $\abs{\lambda}=q$ are
available, all other contributions are absorbed by the error term there.

\newcommand{\movepfa}{We will need the following consequence of Perron--Frobenius theory.}
By a \emph{component} of a digraph we always mean a strongly
connected component.  We call a component  \emph{final} if there
are no arcs leaving the component. The \emph{period} of a component
is the greatest common divisor of its cycle lengths. The \emph{final period} of
a digraph is the least common multiple of the periods of its final components.

\newcommand{\movepfb}{
\begin{lemma}\label{lemma:Perron--Frobenius-again}
  Let $D$ be a directed graph  where each
  vertex has outdegree $q$. Let $M$ be its adjacency matrix and $p$ be its
  final period.
  Then $M$ has spectral radius $q$, $q$ is an
  eigenvalue of $M$ and for all eigenvalues $\lambda$ of $M$ of modulus $q$, the
  algebraic and geometric multiplicities coincide and $\lambda = q\zeta$ for
  some $p$th root of unity $\zeta$.
\end{lemma}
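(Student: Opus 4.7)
The first assertions I would handle by a short row-sum argument. Since every vertex has outdegree $q$, every row of $M$ sums to $q$, so $M\mathbf{1}=q\mathbf{1}$ shows that $q$ is an eigenvalue of $M$, while Gerschgorin's disc theorem (equivalently, the row-sum bound on the spectral radius of a nonnegative matrix) yields $\abs{\lambda}\le q$ for every eigenvalue, so the spectral radius equals $q$.

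The remaining assertions I would extract from the Perron--Frobenius theorem applied one strongly connected component at a time. Order the vertices so that the vertices of the final components $C_1,\dots,C_s$ appear first, grouped by component, and the remaining vertices appear after them. Since no arc leaves a final component,
\begin{equation*}
  M = \begin{pmatrix} \widetilde M & 0 \\ S & N \end{pmatrix},\qquad \widetilde M = \operatorname{diag}(M_1,\dots,M_s),
\end{equation*}
where $M_i$ is the adjacency matrix of the subgraph induced by $C_i$ and $N$ is the adjacency matrix of the subgraph induced by the non-final vertices. For each final component $C_i$, the matrix $M_i$ is nonnegative and irreducible, and all of its row sums equal $q$. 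The Perron--Frobenius theorem for irreducible matrices thus tells us that the eigenvalues of $M_i$ of modulus $q$ are exactly the numbers $q\zeta$ with $\zeta^{p_i}=1$, each a simple eigenvalue, so algebraic and geometric multiplicities coincide for $M_i$; since $p_i\mid p$, each such $\zeta$ is a $p$-th root of unity.

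It remains to verify that every eigenvalue of $N$ has modulus strictly less than $q$. Then the block triangular structure yields $\det(M-\lambda I)=\det(\widetilde M-\lambda I)\det(N-\lambda I)$, so the algebraic multiplicity in $M$ of any $\lambda$ with $\abs{\lambda}=q$ equals its algebraic multiplicity in $\widetilde M$, hence the sum of the algebraic multiplicities in the $M_i$. Moreover, invertibility of $N-\lambda I$ allows one to solve the bottom half of $(M-\lambda I)\binom{u}{v}=0$ uniquely for $v$ in terms of $u$, so that $\dim\ker(M-\lambda I)=\dim\ker(\widetilde M-\lambda I)$; combined with the simplicity result for each $M_i$, this gives the desired equality of algebraic and geometric multiplicities for $\lambda$ in $M$.

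To establish $\rho(N)<q$, I would decompose $N$ analogously into its strongly connected components, which are precisely the non-final components of the original digraph, producing a block triangular form whose diagonal blocks are the matrices $M_j$ for non-final $j$. For each such $C_j$, at least one vertex possesses an arc leaving $C_j$, so at least one row sum of $M_j$ is strictly less than $q$ while the remaining row sums are at most $q$. When $\abs{C_j}\ge 2$, the matrix $M_j$ is irreducible, and the sharpened row-sum bound (for an irreducible nonnegative matrix the spectral radius is strictly less than the maximum row sum whenever the row sums are not all equal) gives $\rho(M_j)<q$; when $\abs{C_j}=1$, the $1\times 1$ matrix $M_j$ has its sole entry strictly less than $q$. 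Either way $\rho(M_j)<q$, and so $\rho(N)<q$. The main obstacle is not a single deep deduction but the careful bookkeeping between the two nested decompositions (final versus non-final components, and further decomposition of $N$) and the interplay of algebraic and geometric multiplicities across the block triangular form; once this is in place, everything reduces to standard Perron--Frobenius.
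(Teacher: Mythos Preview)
Your argument is correct and follows essentially the same strategy as the paper: block-triangularise $M$ according to the strongly connected components, apply Perron--Frobenius to the (irreducible) final blocks, show the non-final blocks have spectral radius strictly below $q$, and read off the multiplicity statement from the block structure. The only differences are cosmetic---the paper orders the final components last (upper triangular) rather than first, bounds $\rho(M_j)<q$ for non-final $j$ by adding loops and invoking strict monotonicity of the spectral radius under $M_j\lneq\tildeM_j$ rather than the row-sum inequality, and verifies the multiplicity claim by zero-padding left eigenvectors of the final blocks rather than your explicit right-kernel computation.
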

This lemma follows from setting $t=0$ in
\cite[Lemma~2.3]{Heuberger-Kropf-Prodinger:2015:output}. As
\cite[Lemma~2.3]{Heuberger-Kropf-Prodinger:2015:output} proves more than we
need here and depends on the notions of that article, we extract the relevant
parts of \cite{Heuberger-Kropf-Prodinger:2015:output} to provide a
self-contained (apart from Perron--Frobenius theorem) proof of
Lemma~\ref{lemma:Perron--Frobenius-again}.
}

  We consider the states of $\calT$ to be numbered by $\set{1, \ldots, d}$ for some
  positive integer $d\ge 1$ such that the initial state is state~$1$. We set
  $\calT_j(n)$ to be the sum of the output labels (including the final output
  label) encountered when the transducer reads the $q$ary expansion of $n$ when
  starting in state~$j$. By construction, we have $\calT(n)=\calT_1(n)$ and
  $\calT_j(0)$ is the final output label of state~$j$. We set
  $y(n)=(\calT_1(n), \ldots, \calT_d(n))$.
  For $0\le r<q$, we define the $(d\times d)$-$\set{0, 1}$-matrix $P_r$ in such a
  way that there is a one in row~$j$, column~$k$ if and only if there is a
  transition from state~$j$ to state~$k$ with input label $r$. The vector $o_r$
  is defined by setting its $j$th coordinate to be the output label of the transition
  from state~$j$ with input label $r$.

For $n_0\ge 1$, we set
\begin{equation*}
\calX(s)=\sum_{n\ge 1}n^{-s}\calT(n),\qquad
\calY_{n_0}(s)=\sum_{n\ge n_0}n^{-s}y(n),\qquad
\zeta_{n_0}(s, \alpha)=\sum_{n\ge n_0}(n+\alpha)^{-s}.
\end{equation*}
The last Dirichlet series is a truncated version of the Hurwitz zeta function.

\begin{corollary}\label{corollary:transducer}
  Let $\calT$ be a transducer as described at the beginning of
  this section. Let $M$ and $p$ be the adjacency matrix and the final period of
  the underlying digraph, respectively. For $\lambda\in\C$ let $m(\lambda)$
  be the size of the largest Jordan block associated with the eigenvalue
  $\lambda$ of $M$.

  Then
  $(\calT(n))_{n\ge 0}$ is a $q$-regular sequence and
  \begin{equation}\label{eq:transducer:summatory-as-fluctuation}
  \begin{aligned}
    \sum_{0\le n<N}\calT(n) = e_\calT N\log_q N  &+ N\Phi(\log_q N)\\
    &+ \sum_{\substack{\lambda\in\sigma(M)\\
        1<\abs{\lambda}<q
      }} N^{\log_q \lambda} \sum_{0\le k<m(\lambda)}(\log_q N)^k\Phi_{\lambda
      k}(\log_q N)\\
    &+ \Oh[\big]{(\log N)^{\max\setm{m(\lambda)}{\abs{\lambda}=1}}}
  \end{aligned}
  \end{equation}
  for some continuous $p$-periodic function $\Phi$, some continuous
  $1$-periodic functions $\Phi_{\lambda k}$ for $\lambda\in\sigma(M)$ with $1<\abs{\lambda}<q$ and $0\le
  k<m(\lambda)$ and some constant
  $e_\calT$.

  Furthermore,
  \begin{equation*}
    \Phi(u)=\sum_{\ell\in\Z}\varphi_\ell\exp\Bigl(\frac{2\ell\pi i}{p}u\Bigr)
  \end{equation*}
  with
  \begin{equation*}
    \varphi_\ell = \Res[\Big]{\frac{\calX(s)}{s}}{s=1+\frac{2\ell\pi
        i}{p\log q}}
  \end{equation*}
  for $\ell\in\Z$.
  The Fourier series expansion of $\Phi_{\lambda k}$ for $\lambda\in\sigma(M)$
  with $1<\abs{\lambda}<q$ is given in Theorem~\ref{theorem:simple}.

  The Dirichlet series $\calY_{n_0}(s)$ satisfies the functional equation
    \begin{align*}
    (I-q^{-s}M)\calY_{n_0}(s) &= \sum_{n_0\le n<qn_0} n^{-s}y(n)
    + q^{-s}\sum_{0\le r<q}\zeta_{n_0}\bigl(s, \tfrac{r}{q}\bigr)o_r\\
    &\phantom{={}}+ q^{-s}\sum_{0\le r<q}P_r\sum_{k\ge 1}\binom{-s}{k}\Bigl(\frac rq\Bigr)^k\calY_{n_0}(s+k).
  \end{align*}
\end{corollary}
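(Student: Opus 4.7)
The plan is to construct a $q$-linear representation whose first coordinate is $\calT(n)$ and then invoke Theorem~\ref{theorem:simple} together with its accompanying Dirichlet-series functional equation. When the transducer reads the $q$-ary expansion of $qn+r$ starting from state $j$, it first follows the input-$r$ transition out of $j$ (emitting output $(o_r)_j$ and moving to some state $j'$), then reads the expansion of $n$ from $j'$; since the final output of the terminal state is identical in both computations, this yields $\calT_j(qn+r)=(o_r)_j+\calT_{j'}(n)$, or $y(qn+r)=o_r+P_r y(n)$ in vector form (for $qn+r\ge 1$). Adjoining a constant coordinate to absorb the affine offset, $v(n)\coloneqq\binom{y(n)}{1}$ satisfies
\[
  v(qn+r)=\tildeA_r v(n),\qquad
  \tildeA_r = \begin{pmatrix}P_r & o_r \\ 0 & 1\end{pmatrix},
\]
exhibiting $(\calT(n))_{n\ge 0}$ as $q$-regular of dimension $d+1$.

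Each $P_r$ is a $0/1$ matrix with exactly one $1$ per row, so $\norm{P_{r_1}\dotsm P_{r_\ell}}_\infty\le 1$ for every product, and the top-right block of a length-$\ell$ $\tildeA$-product accumulates only linearly in~$\ell$. Hence $\norm{\tildeA_{r_1}\dotsm\tildeA_{r_\ell}}=\Oh{\ell}$, so any $R>1$ is admissible in Theorem~\ref{theorem:simple}. The matrix $\widetilde C\coloneqq\tildeA_0+\dotsb+\tildeA_{q-1}$ equals $\bigl(\begin{smallmatrix}M & \sum_r o_r \\ 0 & q\end{smallmatrix}\bigr)$, with spectrum $\sigma(M)\cup\set{q}$. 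Perron--Frobenius theory applied to~$M$ (whose row sums all equal $q$) identifies the eigenvalues of $M$ of modulus $q$ as exactly the numbers $q\zeta$ where $\zeta$ ranges over the $p$th roots of unity, each semisimple, and the remaining eigenvalues of $M$ have modulus $<q$. Theorem~\ref{theorem:simple} therefore produces fluctuations for every eigenvalue of $\widetilde C$ of modulus $>1$: those of modulus in $(1,q)$ give the middle sum of \eqref{eq:transducer:summatory-as-fluctuation} directly, while those of modulus exactly $q$ are combined in the next step.

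For each $p$th root of unity $\zeta_k=e^{2\pi ik/p}$ one has $\log_q(q\zeta_k)=1+2\pi ik/(p\log q)$, so $N^{\log_q(q\zeta_k)}=N\exp(2\pi ik\log_q N/p)$. Summing the $p$ individual size-$1$ contributions merges them into a single term $N\,\Phi(\log_q N)$ with
\[
  \Phi(u)=\sum_{k=0}^{p-1}\exp(2\pi iku/p)\,\Phi_{q\zeta_k,0}(\fractional{u}),
\]
which is $p$-periodic. Expanding each $\Phi_{q\zeta_k,0}$ by \eqref{eq:Fourier-coefficient:simple} and reindexing $m=k+p\ell$ collects the residues into the stated Fourier coefficient $\varphi_\ell=\Res{\frac{\calX(s)}{s}}{s=1+\frac{2\ell\pi i}{p\log q}}$ (the $x(0)/s$ summand contributes no residue at these points). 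The $e_\calT N\log_q N$ term arises when the eigenvalue $q$ of $\widetilde C$ has a Jordan block of size $2$---which may occur because $q$ lies simultaneously in $\sigma(M)$ and in the bottom-right corner of $\widetilde C$---and its coefficient is the \emph{constant} $e_\calT$ because $(0,\dotsc,0,1)$ is a common left eigenvector of every $\tildeA_r$ for the eigenvalue $1$; by the refined eigenvector criterion accompanying Theorem~\ref{theorem:simple}, this forces the top-order Fourier coefficient at $\lambda=q$ to be constant.

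Finally, the Dirichlet-series functional equation for $\calY_{n_0}$ follows from specialising the generalisation of \eqref{eq:functional-equation-V} with starting index $n_0$ (derived by the same parity-of-index splitting as in the proof of Theorem~\ref{theorem:simple}) to the matrices $\tildeA_r$ and reading off the first $d$ coordinates of the vector equation; the constant coordinate of $v$ contributes Dirichlet series of the shape $\sum_{n\ge n_0}(n+r/q)^{-s}=q^s\zeta_{n_0}(s,r/q)$, producing the Hurwitz-like term $q^{-s}\sum_r\zeta_{n_0}(s,r/q)o_r$. The main obstacle lies entirely at $\lambda=q$: controlling the possibly-size-$2$ Jordan block so as to extract the $N\log_q N$ term cleanly, verifying the constancy of $e_\calT$ via the common left eigenvector $(0,\dotsc,0,1)$ rather than only the bare conclusion of Theorem~\ref{theorem:simple}, and sharpening the error to $\Oh{(\log N)^{\max\setm{m(\lambda)}{\abs{\lambda}=1}}}$ rather than the coarser $\Oh{N^{\log_q R}(\log N)^{\ldots}}$ valid for $R>1$; this last sharpening depends on the finiteness property of the joint spectral radius of $\{\tildeA_r\}$, for which $\rho=1$.
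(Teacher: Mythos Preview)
Your approach is essentially the paper's: build the block matrices $\begin{pmatrix}P_r & o_r\\0 & 1\end{pmatrix}$, compute the joint spectral radius~$1$, identify the eigenvalues of the resulting $C$ via Perron--Frobenius and Lemma~\ref{lemma:Perron--Frobenius-again}, merge the $p$ fluctuations at $\lambda=q\zeta$ into a single $p$-periodic $\Phi$, and use the common left eigenvector $(0,\dots,0,1)$ to force the $N\log_q N$ coefficient to be constant.

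There is one technical slip. You note correctly that the recurrence $v(qn+r)=\tildeA_r v(n)$ holds only for $qn+r\ge 1$, but then invoke Theorem~\ref{theorem:simple}, whose hypotheses (the linear representation~\eqref{eq:linear-representation}) require it also at $n=r=0$, i.e.\ $v(0)=\tildeA_0 v(0)$. This would force $\calT_j(0)=\calT_{t(j,0)}(0)+o(j,0)$ for every state~$j$, which a general subsequential transducer need not satisfy. The paper separates the two issues: for the bare $q$-regularity claim it adjoins an extra coordinate $\iverson{n=0}$ to obtain a genuine $(d+2)$-dimensional linear representation, whereas for the asymptotics it stays with the $(d+1)$-dimensional matrices and applies Theorem~\ref{theorem:contribution-of-eigenspace} (the matrix-product version, which explicitly excludes $qn+r=0$). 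That theorem is also where the ``refined eigenvector criterion'' you invoke actually lives. Once you route the argument through Theorem~\ref{theorem:contribution-of-eigenspace} instead of Theorem~\ref{theorem:simple}, everything you wrote goes through as in the paper, including the eigenspace analysis at $\lambda=q$ (the two cases $w_jb=0$ for all~$j$ versus not), the combination into the $p$-periodic $\Phi$, the Fourier reindexing, and the functional equation for $\calY_{n_0}$.
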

\begin{proof}
  The proof is split into several steps.

  \proofparagraph{Recursive Description}
  We set
  $v(n)=\bigl(\calT_1(n), \ldots, \calT_d(n), 1\bigr)^\top$.
  For $1\le j\le d$ and $0\le r<q$, we define $t(j, r)$ and $o(j, r)$ to be the
  target state and output label of the unique transition from
  state $j$ with input label $r$, respectively. Therefore,
  \begin{equation}\label{eq:transducer-to-matrix-product}
    \calT_j(qn+r) = \calT_{t(j, r)}(n) + o(j, r)
  \end{equation}
  for $1\le j\le d$, $n\ge 0$, $0\le r<q$ with $qn+r>0$.

  For $0\le r<q$, define $A_r=(a_{rjk})_{1\le j,\, k\le d+1}$ by
  \begin{equation*}
    a_{rjk} =
    \begin{cases}
      \iverson{t(j, r) = k}& \text{if $j$, $k\le d$,}\\
      o(j, r)& \text{if $j\le d$, $k=d+1$,}\\
      \iverson{k=d+1}& \text{if $j=d+1$.}
    \end{cases}
  \end{equation*}
  Then \eqref{eq:transducer-to-matrix-product} is equivalent to
  \begin{equation*}
    v(qn+r) = A_r v(n)
  \end{equation*}
  for $n\ge 0$, $0\le r<q$ with $qn+r>0$.

  \proofparagraph{$q$-Regular Sequence}
  If we insist on a proper formulation as a regular sequence, we rewrite
  \eqref{eq:transducer-to-matrix-product} to
  \begin{equation}\label{eq:transducer-to-regular-sequence}
    \calT_j(qn+r)= \calT_{t(j,r)}(n) + o(j, r) +
    \iverson{r=0}\iverson{n=0}(\calT_j(0)-\calT_{t(j,0)}(0)-o(j, 0))
  \end{equation}
  for $1\le j\le d$, $n\ge 0$, $0\le r<q$. Setting $\tildev(n)=(\calT_1(n), \ldots,
  \calT_d(n), 1, \iverson{n=0})$ and
  $\tildeA_r=(\tildea_{rjk})_{1\le j,\, k\le d+2}$ with
  \begin{equation*}
    \tildea_{rjk} =
    \begin{cases}
      \iverson{t(j, r) = k}& \text{if $j$, $k\le d$,}\\
      o(j, r)& \text{if $j\le d$, $k=d+1$,}\\
      \iverson{r=0}(\calT_j(0)-\calT_{t(j,0)}(0)-o(j, 0))& \text{if $j\le d$, $k=d+2$,}\\
      \iverson{k=d+1}& \text{if $j=d+1$,}\\
      \iverson{k=d+2}\iverson{r=0}& \text{if $j=d+2$,}
    \end{cases}
  \end{equation*}
  the system~\eqref{eq:transducer-to-regular-sequence} is equivalent to
  \begin{equation*}
    \tildev(qn+r) = \tildeA_r \tildev(n)
  \end{equation*}
  for $n\ge 0$, $0\le r<q$.

  The rest of the proof (relating the eigenvalues of $M$ with those of $C$) can
  be found in Appendix~\ref{appendix:transducers}.

\gdef\movetransducer{
  \proofparagraph{Eigenvalue~$1$}
  By construction, the matrices $A_r$ have the shape
  \begin{equation*}
    A_r = \left(
      \begin{array}{c|c}
        P_r&o_r\\\hline
        0&1
      \end{array}
      \right).
  \end{equation*}
  It is
  clear that $(0, \ldots, 0, 1)$ is a left eigenvector of $A_r$ associated with
  the eigenvalue~$1$.

  \proofparagraph{Joint Spectral Radius}
  We claim that $A_0, \ldots, A_{q-1}$ have joint spectral radius $1$. Let
  $\inftynorm{\,\cdot\,}$ denote the maximum norm of complex vectors as well as the induced
  matrix norm, i.e., the maximum row sum norm. Let $j_1$, \ldots,
  $j_\ell\in\set{0,\ldots, q-1}$. It is easily shown by induction on $\ell$
  that
  \begin{equation*}
    A_{j_1}\dotsm A_{j_\ell}=\left(
      \begin{array}{c|c}
        P&b\\\hline
        0&1
      \end{array}
\right)
  \end{equation*}
  for some $P\in\C^{d\times d}$ and $b\in\C^d$ with $\inftynorm{P}\le 1$ and $\inftynorm{b}\le \ell \max_{0\le
    r<q}\inftynorm{o_r}$.
  Thus, we obtain
  \begin{equation*}
    \inftynorm{A_{j_1}\dotsm A_{j_\ell}}\le 1+\ell\max_{0\le
      r<q}\inftynorm{o_r}.
  \end{equation*}
  As $1$ is an eigenvalue of each matrix~$A_r$ for $0\le r<q$,
  the joint spectral radius equals~$1$, which proves the claim.

  \proofparagraph{Eigenvectors and Asymptotics}
  We now consider $C=\sum_{0\le r<q}A_r$. It has the shape
  \begin{equation*}
    C = \left(
      \begin{array}{c|c}
        M&b\\\hline
        0&q
      \end{array}
      \right)
  \end{equation*}
  where $b$ is some complex vector.

  Let $w_1$, \ldots, $w_\ell$ be a linearly independent system of left
  eigenvectors of $M$ associated with the eigenvector $q$.
  If $w_j b=0$ for $1\le j\le \ell$, then $(w_1, 0)$,
  \ldots, $(w_\ell, 0), (0, 1)$ is a linearly independent system of left
  eigenvectors of $C$ associated with the eigenvalue $q$. In that case
  and because of Lemma~\ref{lemma:Perron--Frobenius-again},
  algebraic and geometric multiplicities of $q$ as an eigenvalue of $C$ are
  both equal to $\ell+1$.

  Otherwise, assume w.l.o.g.\ that $w_1 b=1$. Then
  \begin{equation*}
    (w_2 - (w_2 b)w_1, 0),\, \ldots,\, (w_\ell - (w_\ell b)w_1, 0),\, (0, 1)
  \end{equation*}
  is a linearly independent
  system of left eigenvectors of $C$ associated with the eigenvalue
  $q$. Additionally, $(w_1, 0)$ is a generalised left eigenvector of rank $2$
  of $C$ associated with the eigenvalue $q$ with $(w_1, 0)(C-qI)=(0, 1)$. As
  noted above, the vector
  $(0, 1)$ is a left eigenvector to each matrix $A_0$, \ldots, $A_{q-1}$.

  Similarly, it is easily seen that any left eigenvector of $M$ associated with
  some eigenvalue $\lambda\neq q$ can  be extended uniquely to a left
  eigenvector of $C$ associated with the same eigenvalue. The same is true for
  chains of generalised left eigenvectors associated with $\lambda\neq q$.

  Therefore, in both of the above cases, Theorem~\ref{theorem:contribution-of-eigenspace}
  yields
  \begin{equation*}
    \begin{aligned}
    \sum_{0\le n<N}\calT(N) = e_\calT N\log_q N &+ \sum_{\zeta} N^{\log_q
      (q\zeta)}\Phi_{q\zeta}(\fractional{\log_q N}) \\
    &+ \sum_{\substack{\lambda\in\sigma(M)\\
        1<\abs{\lambda}<q
      }} N^{\log_q \lambda} \sum_{0\le k<m(\lambda)}(\log_q N)^k\Phi_{\lambda
      k}(\log_q N)\\
    &+ \Oh[\big]{(\log N)^{\max\setm{m(\lambda)}{\abs{\lambda}=1}}}
    \end{aligned}
  \end{equation*}
  for some constant $e_\calT$ (which vanishes in the first case) and some
  $1$-periodic continuous functions $\Phi_{q\zeta}$ and $\Phi_{\lambda k}$ where $\zeta$ runs through
  the $p$th roots of unity and $\lambda$ through the eigenvalues of $M$ with
  $1<\abs{\lambda}<q$ and $0\le k<m(\lambda)$. Writing $N^{\log_q(q\zeta)}=N\zeta^{\log_q N}$
  and setting
  \begin{equation*}
    \Phi(u)\coloneqq \sum_\zeta \zeta^u\Phi_{\zeta q}(u)
  \end{equation*}
  leads to \eqref{eq:transducer:summatory-as-fluctuation}.

  \proofparagraph{Fourier Coefficients}
  By Theorem~\ref{theorem:simple}, we have
  \begin{equation*}
    \Phi_{\zeta q}(u)=\sum_{\ell\in\Z}\varphi_{(\zeta q)\ell}\exp(2\ell\pi i u)
  \end{equation*}
  with
  \begin{equation*}
    \varphi_{(\zeta q)\ell}=\Res[\Big]{\frac{\calT(0)+\calX(s)}{s}}{s=1+\log_q \zeta + \frac{2\ell\pi
        i}{\log q}}
  \end{equation*}
  for a $p$th root of unity $\zeta$ and $\ell\in\Z$. Writing
  $\zeta=\exp(2k\pi i/p)$ for a suitable $0\le k<p$, we get
  \begin{equation*}
    \Phi(u)=\sum_{\ell\in\Z} \sum_{0\le k<p} \Res[\Big]{\frac{\calT(0)+\calX(s)}{s}}{s=1 + \frac{2(\ell+\frac{k}{p})\pi
        i}{\log q}} \f[\Big]{\exp}{2\pi i \Bigl(\ell+\frac kp\Bigr)u}.
  \end{equation*}
  Replacing $\ell p+k$ by $\ell$ and noting that $\calT(0)$ does not contribute
  to the residue leads to the Fourier series given in the
  corollary.

  \proofparagraph{Functional Equation}
  By \eqref{eq:transducer-to-matrix-product}, we have
  \begin{align*}
    \calY_{n_0}(s) &= \sum_{n_0\le n<qn_0} n^{-s}y(n) + \sum_{n\ge
      n_0}\sum_{0\le r<q}(qn+r)^{-s}y(qn+r)\\
    &= \sum_{n_0\le n<qn_0} n^{-s}y(n) + \sum_{n\ge
      n_0}\sum_{0\le r<q}(qn+r)^{-s}\bigl(P_r y(n) + o_r\bigr)\\
    &= \sum_{n_0\le n<qn_0} n^{-s}y(n) + q^{-s}\sum_{0\le r<q}P_r
\sum_{n\ge
      n_0}\Bigl(n+\frac{r}{q}\Bigr)^{-s}y(n) \\
    &\hspace*{8.95em}
    + q^{-s}\sum_{0\le r<q}\zeta_{n_0}\bigl(s, \tfrac{r}{q}\bigr)o_r.
  \end{align*}
  Using Lemma~\ref{lemma:shifted-Dirichlet}
  yields the result.
} 
\end{proof}


\section{Pascal's Rhombus}
\label{sec:pascal}
We consider Pascal's rhombus~$\mathfrak{R}$ which is,
for integers~$i\geq0$ and $j$, the array with entries $r_{i,j}$, where
\begin{itemize}
\item $r_{0,j} = 0$ all $j$,
\item $r_{1,0}=1$ and $r_{1,j}=0$ for all $j\neq0$,
\item and
\begin{equation*}
  r_{i,j} = r_{i-1,j-1} + r_{i-1,j} + r_{i-1,j+1} + r_{i-2,j}
\end{equation*}
for $i \geq 1$.
\end{itemize}

\begin{figure}
  \centering
  \includegraphics[width=0.5\linewidth]{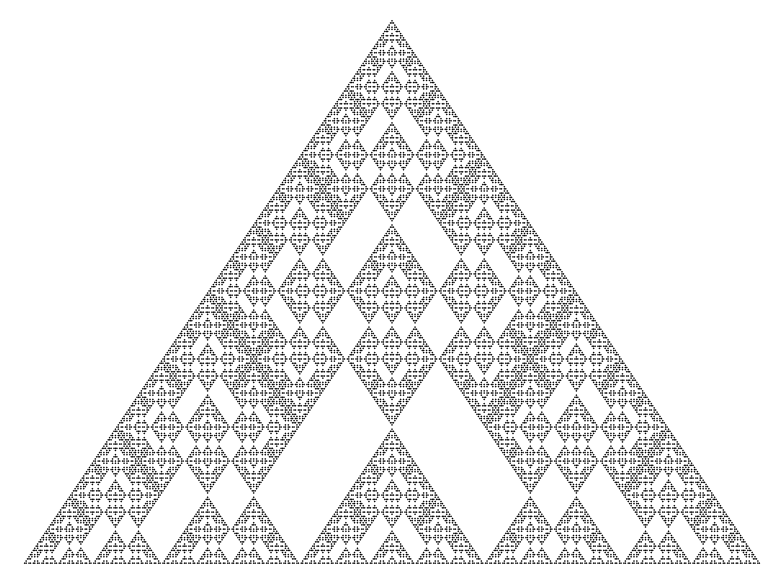}
  \caption{Pascal's rhombus modulo~$2$.}
  \label{fig:pascal-one}
\end{figure}

Let $\mathfrak{X}$ be equal to $\mathfrak{R}$ but with
entries takes modulo~$2$; see also Figure~\ref{fig:pascal-one}.
We partition $\mathfrak{X}$ into the four sub-arrays
\begin{itemize}
\item $\mathfrak{E}$ consisting only of the rows and columns of
  $\mathfrak{X}$ with even indices, i.e., the entries~$r_{2i, 2j}$,
\item $\mathfrak{Y}$ consisting only of the rows with odd indices and
  columns with even indices, i.e., the entries~$r_{2i-1, 2j}$,
\item $\mathfrak{Z}$ consisting only of the rows with even indices and
  columns with odd indices, i.e., the entries~$r_{2i, 2j-1}$, and
\item $\mathfrak{N}$ consisting only of the rows and columns with odd
  indices, i.e., the entries~$r_{2i-1, 2j-1}$.
\end{itemize}
Note that $\mathfrak{E} = \mathfrak{X}$ and $\mathfrak{N}=0$;
see~\cite{Goldwasser-Klostermeyer-Mays-Trapp:1999:Pascal-rhombus}.

\subsection{Recurrence Relations and $2$-Regular Sequences}
\label{sec:recurrences}

Let $X(N)$, $Y(N)$ and $Z(N)$ be the number of ones in the first $n$ rows
(starting with row index~$1$)
of $\mathfrak{X}$, $\mathfrak{Y}$ and $\mathfrak{Z}$ respectively.

\def\movepascalrecurrences{

Goldwasser, Klostermeyer, Mays and
Trapp~\cite[(12)--(14)]{Goldwasser-Klostermeyer-Mays-Trapp:1999:Pascal-rhombus}
get the recurrence relations
\begin{align*}
  X(N) &= X(\floor{\tfrac N2}) + Y(\ceil{\tfrac N2}) + Z(\floor{\tfrac N2}), \\
  Y(N) &= X(\ceil{\tfrac N2}) + X(\floor{\tfrac N2}-1) + Z(\floor{\tfrac N2}) + Z(\ceil{\tfrac N2}-1), \\
  Z(N) &= 2 X(\floor{\tfrac N2}) + 2 Y(\ceil{\tfrac N2}).
\end{align*}
for $N\ge2$, and $X(0)=Y(0)=Z(0)=0$, $X(1)=1$, $Y(1)=1$ and $Z(1)=2$
(cf.~\cite[Figures~2 and~3]{Goldwasser-Klostermeyer-Mays-Trapp:1999:Pascal-rhombus}).
Distinguishing between even and odd indices gives
\begin{align*}
  X(2N) &= X(N) + Y(N) + Z(N), \\
  X(2N+1) &= X(N) + Y(N+1) + Z(N), \\
  Y(2N) &= X(N) + X(N-1) + Z(N) + Z(N-1), \\
  Y(2N+1) &= X(N+1) + X(N-1) + 2Z(N), \\
  Z(2N) &= 2X(N) + 2Y(N), \\
  Z(2N+1) &= 2X(N) + 2Y(N+1)
\end{align*}
for all $N\ge1$.
Now we build the backward differences
$x(n) = X(n) - X(n-1)$, $y(n) = Y(n) - Y(n-1)$ and $z(n) = Z(n) - Z(n-1)$.
These $x(n)$, $y(n)$ and $z(n)$ are the number
of ones in the $n$th row of $\mathfrak{X}$, $\mathfrak{Y}$ and
$\mathfrak{Z}$ respectively and clearly
\begin{equation*}
  X(N) = \sum_{1\leq n \leq N} x(n),
  \qquad
  Y(N) = \sum_{1\leq n \leq N} y(n),
  \qquad
  Z(N) = \sum_{1\leq n \leq N} z(n).
\end{equation*}

} 

Using results by Goldwasser, Klostermeyer, Mays and
Trapp~\cite{Goldwasser-Klostermeyer-Mays-Trapp:1999:Pascal-rhombus}
leads to recurrence relations for the backward differences
$x(n) = X(n) - X(n-1)$, $y(n) = Y(n) - Y(n-1)$ and $z(n) = Z(n) - Z(n-1)$,
namely
\begin{subequations}
  \label{eq:rec-pascal-rhombus:main}
  \begin{align}
    x(2n)&=x(n)+z(n), &
    x(2n+1)&=y(n+1), \label{eq:rec-x}\\
    y(2n)&= x(n-1)+z(n), &
    y(2n+1)&=x(n+1) +z(n), \label{eq:rec-y}\\
    z(2n)&= 2x(n), &
    z(2n+1)&=2y(n+1) \label{eq:rec-z}
  \end{align}
\end{subequations}
for $n\ge1$, and $x(0)=y(0)=z(0)=0$, $x(1)=1$, $y(1)=1$ and $z(1)=2$.
(See Appendix~\ref{appendix:pascal:recurrence} for details.)

Let use write our coefficients as the
vector
\begin{equation}\label{eq:pascal:vec-v}
  v(n) = \bigl(x(n), x(n+1), y(n+1), z(n), z(n+1)\bigr)^\top.
\end{equation}
It turns out that the components included into $v(n)$ are
sufficient for a self-contained linear representation of~$v(n)$.
In particular, it is not necessary to include~$y(n)$.
By using the recurrences~\eqref{eq:rec-pascal-rhombus:main}, we find that
\begin{equation*}
  v(2n) = A_0 v(n)
  \qquad\text{and}\qquad
  v(2n+1) = A_1 v(n)
\end{equation*}
for all\footnote{ Note that $v(0) = A_0 v(0)$ and $v(1) = A_1 v(0)$ are indeed
  true.}
 $n\ge0$ with the matrices
\begin{equation*}
  A_0 =
  \begin{pmatrix}
    1 & 0 & 0 & 1 & 0 \\
    0 & 0 & 1 & 0 & 0 \\
    0 & 1 & 0 & 1 & 0 \\
    2 & 0 & 0 & 0 & 0 \\
    0 & 0 & 2 & 0 & 0
  \end{pmatrix}
  \qquad\text{and}\qquad
  A_1 =
  \begin{pmatrix}
    0 & 0 & 1 & 0 & 0 \\
    0 & 1 & 0 & 0 & 1 \\
    1 & 0 & 0 & 0 & 1 \\
    0 & 0 & 2 & 0 & 0 \\
    0 & 2 & 0 & 0 & 0
  \end{pmatrix},
\end{equation*}
and with $v(0) = (0,1,1,0,2)^\top$.
Therefore, the sequences $x(n)$, $y(n)$ and $z(n)$ are $2$-regular.

\subsection{Asymptotics}
\label{sec:asymptotics}

\begin{corollary}\label{corollary:pascal-rhombus:main}
  We have
  \begin{equation}\label{eq:pascal-rhombus:main-asy}
    X(N) = \sum_{1\leq n \leq N} x(n)
    = N^\kappa \f{\Phi}{\fractional{\log_2 N}} + \Oh{N \log_2 N}
  \end{equation}
  with $\kappa = \log_2 \bigl(3+\sqrt{17}\,\bigr)-1 = 1.83250638358045\ldots$ and
  a $1$-periodic function $\Phi$ which is Hölder continuous with
  any exponent smaller than $\kappa-1$.

  Moreover, we can effectively compute the Fourier coefficients
  of~$\Phi$.
\end{corollary}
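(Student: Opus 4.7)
The plan is to apply Theorem~\ref{theorem:simple} to the $2$-regular sequence $x(n)$, whose linear representation is given in Section~\ref{sec:recurrences} by the matrices $A_0, A_1$ and the vector $v(n)$ from~\eqref{eq:pascal:vec-v}, of which $x(n)$ is the first component.

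First I would analyse the spectrum of $C = A_0 + A_1$. The coordinate permutation $\sigma$ that swaps $1\leftrightarrow 2$ and $4\leftrightarrow 5$ commutes with $C$, so $C$ decomposes along the corresponding $\pm 1$-eigenspaces. The restriction of $C$ to the three-dimensional symmetric subspace (with basis $(1,1,0,0,0)$, $(0,0,1,0,0)$, $(0,0,0,1,1)$) has matrix
\begin{equation*}
  \begin{pmatrix} 1 & 1 & 1 \\ 2 & 0 & 2 \\ 2 & 2 & 0 \end{pmatrix}
  \qquad\text{with characteristic polynomial}\qquad (\lambda+2)(\lambda^2-3\lambda-2),
\end{equation*}
while the restriction to the two-dimensional antisymmetric subspace is $\bigl(\begin{smallmatrix} 1 & 1 \\ 2 & 0\end{smallmatrix}\bigr)$ with characteristic polynomial $(\lambda-2)(\lambda+1)$. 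Hence $C$ has the five simple eigenvalues $\tfrac{1}{2}(3+\sqrt{17})$, $2$, $-1$, $\tfrac{1}{2}(3-\sqrt{17})$, $-2$. The dominant one is $\lambda_\star=\tfrac{1}{2}(3+\sqrt{17})$, and $\log_2\lambda_\star = \log_2(3+\sqrt{17})-1=\kappa$. Next I would choose $R=2$: since $\inftynorm{A_0}=\inftynorm{A_1}=2$ in the maximum-row-sum norm, submultiplicativity gives $\inftynorm{A_{r_1}\dotsm A_{r_\ell}}\le 2^\ell$, so $R=2$ is admissible in the sense of Theorem~\ref{theorem:simple}. The only eigenvalue of $C$ with $\abs{\lambda}>R$ is $\lambda_\star$, with $m(\lambda_\star)=1$; the eigenvalues on the circle $\abs{\lambda}=R$, namely $\pm 2$, are simple, so Theorem~\ref{theorem:simple} yields~\eqref{eq:pascal-rhombus:main-asy} with $\Phi$ continuous and $1$-periodic, Hölder continuous with any exponent below $\log_2(\lambda_\star/2)=\kappa-1$.

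For the effective computability of the Fourier coefficients of $\Phi$, formula~\eqref{eq:Fourier-coefficient:simple} expresses the $\ell$-th coefficient as the residue of $\bigl(x(0)+\calX(s)\bigr)/s$ at $s=\kappa+\tfrac{2\ell\pi i}{\log 2}$. At each such $s$ one has $2^s=\lambda_\star$, so $I-2^{-s}C$ has a one-dimensional left null space; pairing that null vector with the right-hand side of the functional equation~\eqref{eq:functional-equation-V}, which converges absolutely in a neighbourhood of $s$ because $\Re(s+k)\ge\kappa+1>\log_2 R+1$ for every $k\ge 1$, and reading off the first component yields the residue in closed form (up to evaluation of known Hurwitz-type Dirichlet sums and of $\calV$ at points with large real part, where its defining series converges). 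The main obstacle is the spectral analysis of $C$, in particular ruling out Jordan blocks of size $\ge 2$ on $\abs{\lambda}=2$; the symmetry decomposition above furnishes an explicit diagonalisation that takes care of this, after which the corollary is a direct application of the already-proven Theorem~\ref{theorem:simple}.
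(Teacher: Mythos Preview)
Your proposal is correct and follows essentially the same route as the paper: apply Theorem~\ref{theorem:simple} after checking $R=2$ via the row-sum norm and computing the (simple) spectrum of $C$, then extract the Fourier coefficients from the functional equation~\eqref{eq:functional-equation-V}. The only point you gloss over is that Theorem~\ref{theorem:simple} gives the asymptotics of $\sum_{0\le n<N}x(n)$ rather than of $\sum_{1\le n\le N}x(n)$; the paper notes explicitly that this index shift is absorbed by the $O(N\log_2 N)$ error term since $x(N)=O(N)$.
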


We get analogous results for the sequences~$Y(N)$ and $Z(N)$ (each with
its own periodic function~$\Phi$, but the same exponent $\kappa$).
The fluctuation~$\Phi$ of $X(N)$ is visualized in Figure~\ref{fig:fluct-a} and its
first few Fourier coefficients are shown in Table~\ref{table:pascal-rhombus:fourier}.

\begin{figure}
  \centering
  \includegraphics[width=0.75\linewidth]{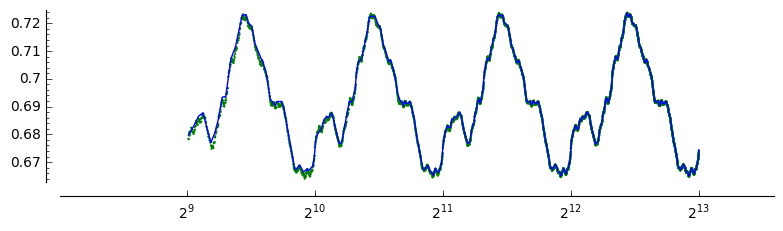}
  \caption{Fluctuation in the main term of the asymptotic expansion of $X(N)$.
    The figure shows $\f{\Phi}{\log_2 N}$ (blue) approximated by
    its trigonometric polynomial of degree~$99$ and
    $X(N) / N^\kappa$ (green).}
  \label{fig:fluct-a}
\end{figure}
\begin{table}
  \centering
  \begin{equation*}\footnotesize
    \begin{array}{r|l}
\multicolumn{1}{c|}{\ell} &
\multicolumn{1}{c}{\alpha_\ell} \\
\hline
0 & \phantom{-}0.6911615112341912755021246 \\
1 & -0.01079216311240407872950510 - 0.0023421761940286789685827i \\
2 & \phantom{-}0.00279378637350495172116712 - 0.00066736128659728911347756i \\
3 & -0.00020078258323645842522640 - 0.0031973663977645462669373i \\
4 & \phantom{-}0.00024944678921746747281338 - 0.0005912995467076061497650i \\
5 & -0.0003886698612765803447578 + 0.00006723866319930148568431i \\
6 & -0.0006223575988893574655258 + 0.00043217220614939859781542i \\
7 & \phantom{-}0.00023034317364181383130476 - 0.00058663168772856091427688i \\
8 & \phantom{-}0.0005339060804798716172593 - 0.0002119380802590974909465i \\
9 & \phantom{-}0.0000678898389770175928529 - 0.00038307823285486235280185i \\
10 & -0.00019981745997355255061991 - 0.00031394569060142799808175i \\
    \end{array}
  \end{equation*}
  \caption{Fourier coefficients of~$\Phi$
    (Corollary~\ref{corollary:pascal-rhombus:main}). All stated digits are
  correct.}
\label{table:pascal-rhombus:fourier}
\end{table}

At this point, we only prove~\eqref{eq:pascal-rhombus:main-asy} of
Corollary~\ref{corollary:pascal-rhombus:main}. We deal with the
Fourier coefficients in Appendix~\ref{sec:fourier}. As in the
introductory example of the binary sum-of-digits functions
(Example~\ref{example:binary-sum-of-digits}), we could get Fourier
coefficients by Theorem~\ref{theorem:simple} and the $2$-linear
representation of Section~\ref{sec:recurrences} directly. However,
the information in the vector~$v(n)$ (see \eqref{eq:pascal:vec-v})
is redundant with respect to the asymptotic main term as it contains
$x(n)$ and $z(n)$ as well as $x(n+1)$ and $z(n+1)$; both pairs are
asymptotically equal in the sense of~\eqref{eq:pascal-rhombus:main-asy}.
Therefore, we head for an only $3$-dimensional functional system of equations
for our Dirichlet series of $x(n)$, $y(n)$ and $z(n)$
(instead of a $5$-dimensional system).

\begin{proof}[Proof of~\eqref{eq:pascal-rhombus:main-asy}]
    We use Theorem~\ref{theorem:simple}.

    \proofparagraph{Joint Spectral Radius}
    First we compute the joint spectral radius $\rho$ of
    $A_0$ and $A_1$. Both matrices have a maximum absolute
    row sum equal to $2$, thus $\rho\leq 2$, and both
    matrices have~$2$ as an eigenvalue. Therefore we obtain
    $\rho=2$. Moreover, the finiteness property of the linear
    representation is satisfied by considering only products with exactly one
    matrix factor $A_0$ or $A_1$.

    Thus, we have $R=\rho=2$.

    \proofparagraph{Eigenvalues}
    Next, we compute the spectrum~$\sigma(C)$ of $C=A_0+A_1$. The
    matrix $C$ has the eigenvalues~$\lambda_1=\bigl(3+\sqrt{17}\,\bigr)/2=3.5615528128088\ldots$,
    $\lambda_2=2$, $\lambda_3=-2$, $\lambda_4=-1$ and
    $\lambda_5=\bigl(3-\sqrt{17}\,\bigr)/2=-0.5615528128088\ldots$ (each with multiplicity one).
    (Note that $\lambda_1$ and $\lambda_5$ are the zeros of the
    polynomial~$U^2-3U-U$.)

    \proofparagraph{Asymptotic Formula}
    By using Theorem~\ref{theorem:simple}, we obtain an
    asymptotic formula for $X(N-1)$. Shifting from $N-1$ to $N$ does not
    change this asymptotic formula, as this shift is absorbed by the
    error term $\Oh{n \log_2 N}$.
\end{proof}

\subsection{Dirichlet Series and Meromorphic Continuation}
\label{sec:meromorphic}

Let $n_0\ge2$ be an integer and define
\begin{align*}
  \f{\calX_{n_0}}{s} &= \sum_{n\geq n_0} \frac{x(n)}{n^s}, &
  \f{\calY_{n_0}}{s} &= \sum_{n\geq n_0} \frac{y(n)}{n^s}, &
  \f{\calZ_{n_0}}{s} &= \sum_{n\geq n_0} \frac{z(n)}{n^s}.
\end{align*}

\begin{lemma}\label{lemma:meromorphic}
  Set
  \begin{equation*}
    C = I -
    \begin{pmatrix}
      2^{-s} & 2^{-s} & 2^{-s} \\
      2^{1-s} & 0 & 2^{1-s} \\
      2^{1-s} & 2^{1-s} & 0 \\
    \end{pmatrix}.
  \end{equation*}
  Then
  \begin{equation}\label{eq:pascal:functional-equation}
    C
    \begin{pmatrix}
      \f{\calX_{n_0}}{s} \\ \f{\calY_{n_0}}{s} \\ \f{\calZ_{n_0}}{s}
    \end{pmatrix}
    =
    \begin{pmatrix}
      \f{\calJ_{n_0}}{s} \\ \f{\calK_{n_0}}{s} \\ \f{\calL_{n_0}}{s}
    \end{pmatrix}\!,
  \end{equation}
  where
  \begin{align*}
    \f{\calJ_{n_0}}{s} &= 2^{-s} \f{\Sigma}{s, -\tfrac12, \calY_{n_0}}
    + \calI_{\calJ_{n_0}}(s), \\
    &\;\calI_{\calJ_{n_0}}(s) = - \frac{y(n_0)}{(2n_0-1)^s}
    + \sum_{n_0\leq n<2n_0} \frac{x(n)}{n^s}, \\
    \f{\calK_{n_0}}{s} &=
    2^{-s} \f{\Sigma}{s, 1, \calX_{n_0}} + 2^{-s} \f{\Sigma}{s, -\tfrac12, \calX_{n_0}}
    + 2^{-s} \f{\Sigma}{s, \tfrac12, \calZ_{n_0}}
    + \calI_{\calK_{n_0}}(s), \\
    &\;\calI_{\calK_{n_0}}(s) = \frac{x(n_0-1)}{(2n_0)^s} - \frac{x(n_0)}{(2n_0-1)^s}
    + \sum_{n_0\leq n<2n_0} \frac{y(n)}{n^s}, \\
    \f{\calL_{n_0}}{s} &= 2^{1-s} \f{\Sigma}{s, -\tfrac12, \calY_{n_0}}
    + \calI_{\calL_{n_0}}(s), \\
    &\;\calI_{\calL_{n_0}}(s) = - \frac{2 y(n_0)}{(2n_0-1)^s}
    + \sum_{n_0\leq n<2n_0} \frac{z(n)}{n^s},
  \end{align*}
  with
  \begin{equation*}
    \f{\Sigma}{s, \beta, \calD} = \sum_{k\ge 1}
    \binom{-s}{k} \beta^k \calD(s+k)
  \end{equation*}
  provides meromorphic continuations
  of the Dirichlet series~$\f{\calX_{n_0}}{s}$, $\f{\calY_{n_0}}{s}$,
  and $\f{\calZ_{n_0}}{s}$ for $\Re s > \kappa_0=1$ with the only possible
  poles at $\kappa + \chi_\ell$ for $\ell\in\Z$,
  all of which are simple poles.
\end{lemma}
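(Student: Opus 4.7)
The plan is to derive the functional equation~\eqref{eq:pascal:functional-equation} by splitting each Dirichlet series according to the parity of the summation index, applying the recurrences~\eqref{eq:rec-pascal-rhombus:main}, and repackaging the resulting shifted series via $\f{\Sigma}{s,\beta,\calD}$ from Lemma~\ref{lemma:shifted-Dirichlet}. The meromorphic continuation then follows by inverting the matrix $C$ from the lemma statement and reading off the zeros of $\det C$.

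Concretely, for $\calX_{n_0}$ I would start from
\begin{equation*}
\calX_{n_0}(s) = \sum_{n_0\le n<2n_0}\frac{x(n)}{n^s}
 + \sum_{n\ge n_0}\frac{x(2n)}{(2n)^s}
 + \sum_{n\ge n_0}\frac{x(2n+1)}{(2n+1)^s}
\end{equation*}
and insert $x(2n)=x(n)+z(n)$ and $x(2n+1)=y(n+1)$ from~\eqref{eq:rec-x}. The even part is immediately $2^{-s}\bigl(\calX_{n_0}(s)+\calZ_{n_0}(s)\bigr)$; for the odd part, the index shift $m=n+1$ together with the rewriting $(2m-1)^s=2^s(m-\tfrac12)^s$ yields $2^{-s}\bigl(\calY_{n_0}(s)+\f{\Sigma}{s,-\tfrac12,\calY_{n_0}}\bigr)$ up to the boundary term $-y(n_0)/(2n_0-1)^s$, which combines with the finite initial sum into $\calI_{\calJ_{n_0}}(s)$. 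Collecting $\calX_{n_0}$, $\calY_{n_0}$, $\calZ_{n_0}$ on the left reproduces the first row of~\eqref{eq:pascal:functional-equation}. The equations for $\calY_{n_0}$ and $\calZ_{n_0}$ follow the same template: the three shifts $x(n-1)$, $x(n+1)$, $z(n)$ in~\eqref{eq:rec-y} create exactly the three $\Sigma$-terms with arguments $+1$, $-\tfrac12$, $+\tfrac12$ appearing in $\calK_{n_0}$, and~\eqref{eq:rec-z} reduces to twice the odd-index calculation already performed for~$x$, giving $\calL_{n_0}$.

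For the meromorphic continuation, Lemma~\ref{lemma:shifted-Dirichlet} together with $\rho=R=2$ (established in the proof of~\eqref{eq:pascal-rhombus:main-asy}) ensures that each $\f{\Sigma}{s,\beta,\cdot}$ on the right-hand side is analytic on $\Re s>1$, so $\calJ_{n_0}$, $\calK_{n_0}$, $\calL_{n_0}$ are analytic there. Since~\eqref{eq:pascal:functional-equation} holds on $\Re s>2$ where all series converge absolutely, it extends by analytic continuation to $\Re s>1$, and inverting $C(s)$ provides meromorphic continuations of $\calX_{n_0}$, $\calY_{n_0}$, $\calZ_{n_0}$ with possible poles at the zeros of $\det C(s)$. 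Writing $C(s)=I-2^{-s}\widetilde M$ with
\begin{equation*}
\widetilde M = \begin{pmatrix} 1 & 1 & 1 \\ 2 & 0 & 2 \\ 2 & 2 & 0 \end{pmatrix},
\end{equation*}
a direct computation shows that the characteristic polynomial of $\widetilde M$ factors as $(U^2-3U-2)(U+2)$, so $\sigma(\widetilde M)=\bigl\{(3+\sqrt{17})/2,\,(3-\sqrt{17})/2,\,-2\bigr\}$. Among these, only $2^s=(3+\sqrt{17})/2$ can be solved in $\Re s>1$, which yields exactly the poles $s=\log_2\bigl((3+\sqrt{17})/2\bigr)+\chi_\ell=\kappa+\chi_\ell$ as claimed; their simplicity follows from the simplicity of $(3+\sqrt{17})/2$ as an eigenvalue of $\widetilde M$.

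The main obstacle will be the bookkeeping in the derivation of the three functional equations: one must carefully track how the finite initial sums and the single-term boundary corrections produced by the shifts $m=n\pm1$ assemble into the stated $\calI_{\calJ_{n_0}}$, $\calI_{\calK_{n_0}}$, $\calI_{\calL_{n_0}}$, and which of the rewrites $(2m\pm1)^s=2^s(m\pm\tfrac12)^s$ and $(2m)^s=2^s m^s$ is used in each case. The remaining content---extending the identity to $\Re s>1$ and localising the poles via the spectrum of $\widetilde M$---is a routine application of Lemma~\ref{lemma:shifted-Dirichlet} and elementary linear algebra.
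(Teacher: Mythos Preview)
Your proposal is correct and follows essentially the same approach as the paper's proof: split each Dirichlet series by parity of the summation index, insert the recurrences~\eqref{eq:rec-pascal-rhombus:main}, rewrite the resulting shifted sums via Lemma~\ref{lemma:shifted-Dirichlet}, and then invert $C(s)$ to obtain the meromorphic continuation with poles at the zeros of $\det C(s)$. Your computation of the poles via the eigenvalues of $\widetilde M$ is exactly the paper's determinant calculation $\Delta(s)=2^{-3s}(2^{2s}-3\cdot 2^s-2)(2^s+2)$ rewritten, and your identification of the bookkeeping of boundary terms as the main obstacle is accurate.
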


The proof of Lemma~\ref{lemma:meromorphic} can be found in
Appendix~\ref{appendix:pascal}. The idea is to rewrite the Dirichlet
series corresponding to \eqref{eq:rec-x}, \eqref{eq:rec-y} and
\eqref{eq:rec-z} to obtain the functional equation. The meromorphic
continuation uses Lemma~\ref{lemma:shifted-Dirichlet}; its poles come
from
\begin{equation*}
  \f{\Delta}{s} = \det C
  = 2^{-3s} (2^{2s} - 3\cdot 2^s - 2) (2^s + 2).
\end{equation*}

The Fourier coefficients (rest of
Corollary~\ref{corollary:pascal-rhombus:main}) can then be computed by
applying Theorem~\ref{theorem:simple}.

\def\moveproofpascalmeromorphic{
\begin{proof}[Proof of Lemma~\ref{lemma:meromorphic}]
  We split the proof into several steps.
  Note that rewriting $\f{\Sigma}{s, \beta, \calD}$ as binomial series
  is done in Lemma~\ref{lemma:shifted-Dirichlet}.

  \proofparagraph{Functional Equation}
  From \eqref{eq:rec-x} we obtain
  \begin{equation*}
    \f{\calX_{n_0}}{s} = \sum_{n_0\leq n<2n_0} \frac{x(n)}{n^s}
    + \sum_{n\geq n_0} \frac{x(n)}{(2n)^s}
    + \sum_{n\geq n_0} \frac{z(n)}{(2n)^s}
    + \sum_{n\geq n_0} \frac{y(n+1)}{(2n+1)^s}
  \end{equation*}
  The second and third summands become $2^{-s} \f{\calX_{n_0}}{s}$ and $2^{-s} \f{\calZ_{n_0}}{s}$.
  respectively, and we are left to rewrite the fourth summand. By
  using Lemma~\ref{lemma:shifted-Dirichlet} with $\beta=-1/2$ we
  get
  \begin{align*}
    \sum_{n\geq n_0} \frac{y(n+1)}{(2n+1)^s}
    &= 2^{-s} \sum_{n\geq n_0} \frac{y(n)}{(n-\frac12)^s}
    - \frac{y(n_0)}{(2n_0-1)^s} \\
    &= 2^{-s} \f{\calY_{n_0}}{s}
    + 2^{-s} \f{\Sigma}{s, -\tfrac12, \calY_{n_0}} - \frac{y(n_0)}{(2n_0-1)^s}.
  \end{align*}
  The first row of \eqref{eq:pascal:functional-equation} now follows.

  Similarly, from \eqref{eq:rec-y} we obtain
  \begin{equation}\label{eq:func:Ys}
    \begin{split}
    \f{\calY_{n_0}}{s} &= \sum_{n_0\leq n<2n_0} \frac{y(n)}{n^s}
    + \sum_{n\geq n_0} \frac{x(n-1)}{(2n)^s}
    + \sum_{n\geq n_0} \frac{z(n)}{(2n)^s} \\
    &\phantom{=}\hphantom{0}
    + \sum_{n\geq n_0} \frac{x(n+1)}{(2n+1)^s}
    + \sum_{n\geq n_0} \frac{z(n)}{(2n+1)^s} \\
    &= \sum_{n_0\leq n<2n_0} \frac{y(n)}{n^s}
    + 2^{-s} \sum_{n\geq n_0} \frac{x(n)}{(n+1)^s} + \frac{x(n_0-1)}{(2n_0)^s}
    + 2^{-s} \sum_{n\geq n_0} \frac{z(n)}{n^s} \\
    &\phantom{=}\hphantom{0}
    + 2^{-s} \sum_{n\geq n_0} \frac{x(n)}{(n-\frac12)^s} - \frac{x(n_0)}{(2n_0-1)^s}
    + 2^{-s} \sum_{n\geq n_0} \frac{z(n)}{(n+\frac12)^s}.
    \end{split}
  \end{equation}
  The second row of \eqref{eq:pascal:functional-equation} again
  follows by using Lemma~\ref{lemma:shifted-Dirichlet}.

  Similarly, \eqref{eq:rec-z} yields
  \begin{align*}
    \f{\calZ_{n_0}}{s} &= \sum_{n_0\leq n<2n_0} \frac{z(n)}{n^s}
    + 2 \sum_{n\geq n_0} \frac{x(n)}{(2n)^s}
    + 2 \sum_{n\geq n_0} \frac{y(n+1)}{(2n+1)^s} \\
    &= \sum_{n_0\leq n<2n_0} \frac{z(n)}{n^s}
    + 2^{1-s} \sum_{n\geq n_0} \frac{x(n)}{n^s}
    + 2^{1-s} \sum_{n\geq n_0} \frac{y(n)}{(n-\tfrac12)^s} - \frac{2 y(n_0)}{(2n_0-1)^s},
  \end{align*}
  and the third row of \eqref{eq:pascal:functional-equation} follows.

  \proofparagraph{Determinant and Zeros}
  The determinant of $C$ is
  \begin{equation*}
    \f{\Delta}{s} = \det C
    = 2^{-3s} \bigl(2^{2s} - 3\cdot 2^s - 2\bigr) \bigl(2^s + 2\bigr).
  \end{equation*}
  It is an entire function.

  All zeros of $\Delta$ are simple zeros.
  In particular, solving $\f{\Delta}{s} = 0$ gives $2^s = 3/2 \pm \sqrt{17}/2$ (the two zeros of $X^2-3X-2$) and $2^s = -2$.
  A solution $\f{\Delta}{s_0} = 0$
  implies that $s_0 + 2\pi i \ell/\log 2$ with $\ell\in\Z$ satisfies
  the same equation as well.

  Moreover, set $\kappa=\log_2 \bigl(3+\sqrt{17}\,\bigr) - 1 = 1.8325063835804\dots$.
  Then the only zeros with $\Re s > \kappa_0=1$ are at
  $\kappa + \chi_\ell$ with $\chi_\ell = 2\pi i \ell / \log 2$ for $\ell\in\Z$.

  It is no surprise that the $\kappa$ of this lemma and the $\kappa$
  in the proof of Corollary~\ref{corollary:main} which comes from the
  $2$-linear representation of Section~\ref{sec:recurrences} coincide.

  \proofparagraph{Meromorphic Continuation}
  The Dirichlet series
  $\calD_{n_0}\in\set{\calX_{n_0},\calY_{n_0},\calZ_{n_0}}$ is
  analytic for $\Re s > 2 = \log_2 \rho + 1$ with $\rho=2$ being the
  joint spectral radius by Theorem~\ref{theorem:Dirichlet-series}.
  We use the
  functional equation~\eqref{eq:pascal:functional-equation} which
  provides the continuation, as we write $\f{\calD_{n_0}}{s}$ in terms of
  $\f{\calJ_{n_0}}{s}$, $\f{\calK_{n_0}}{s}$ and $\f{\calL_{n_0}}{s}$.
  By Lemma~\ref{lemma:shifted-Dirichlet},
  these three functions are analytic for $\Re s > 1$.

  The zeros (all are simple zeros)
  of the denominator~$\f{\Delta}{s}$ are the only possibilities
  for the poles of $\f{\calD_{n_0}}{s}$ for $\Re s > 1$.
\end{proof}

} 

\def\movesectionpascalfourier{

\subsection{Fourier Coefficients}
\label{sec:fourier}

We are now ready to prove the rest of Corollary~\ref{corollary:main}.

\begin{proof}[Proof of Corollary~\ref{corollary:main}]
  We verify that we can apply Theorem~\ref{theorem:use-Mellin--Perron}.

  The steps of this proof in Section~\ref{sec:asymptotics} provided us
  already with an asymptotic
  expansion~\eqref{eq:pascal-rhombus:main-asy}. Lemma~\ref{lemma:meromorphic}
  gives us the meromorphic function for $\Re s>\kappa_0=1$ which comes from
  the Dirichlet series
  $\bigl(\f{\calX_{n_0}}{s}, \f{\calY_{n_0}}{s}, \f{\calZ_{n_0}}{s}\bigr)^\top$.
  It has simple poles at $\kappa + \chi_\ell$ for all $\ell\in\Z$ and
  satisfies the assumptions in
  Theorem~\ref{theorem:use-Mellin--Perron} by
  Theorem~\ref{theorem:Dirichlet-series} and
  Remark~\ref{remark:Dirichlet-series:bound}.

  Therefore a computation of the Fourier coefficients via computing
  residues (see \eqref{eq:Fourier-coefficient}) is possible by
  Theorem~\ref{theorem:use-Mellin--Perron}, and this residue may be
  computed from~\eqref{eq:pascal:functional-equation} via Cramer's
  rule.
\end{proof}

} 

\def\movesectionpascalexplicitbounds{

\subsection{Explicit Bounds and Computation of the Fourier Coefficients}
\label{sec:fourier-compute}

It turns out that it is more convenient to use the following set of
functional equations (in particular, they provide more stable
numerical calculations).

\begin{remark}\label{remark:modified-functional-equation}
  We modify the functional equation~\eqref{eq:pascal:functional-equation}
  of Lemma~\ref{lemma:meromorphic} in the following way.
  We can expand further on in Equation~\eqref{eq:func:Ys} and obtain
  \begin{align*}
    \sum_{n\geq n_0} \frac{x(n)}{(n+1)^s}
    &= \sum_{n_0\leq n<2n_0} \frac{x(n)}{(n+1)^s}
    + \sum_{n\geq n_0} \frac{x(2n)}{(2n+1)^s}
    + \sum_{n\geq n_0} \frac{x(2n+1)}{(2n+2)^s} \\
    &= \sum_{n_0\leq n<2n_0} \frac{x(n)}{(n+1)^s}
    + 2^{-s} \sum_{n\geq n_0} \frac{x(n)}{(n+\frac12)^s}
    + 2^{-s} \sum_{n\geq n_0} \frac{z(n)}{(n+\frac12)^s} \\
    &\phantom{=}\hphantom{0}
    + 2^{-s} \sum_{n\geq n_0} \frac{y(n+1)}{(n+1)^s} \\
    &= 2^{-s} \bigl( \f{\calX_{n_0}}{s} + \f{\calY_{n_0}}{s} + \f{\calZ_{n_0}}{s} \bigr)
    + \sum_{n_0\leq n<2n_0} \frac{x(n)}{(n+1)^s} \\
    &\phantom{=}\hphantom{0}
    + 2^{-s} \f{\Sigma}{s, \tfrac12, \calX_{n_0}}
    + 2^{-s} \f{\Sigma}{s, \tfrac12, \calZ_{n_0}} - \frac{y(n_0)}{(2n_0)^s}
  \end{align*}
  This changes our matrix~$C$ and the function~$\f{\calK_{n_0}}{s}$ which are now
  \begin{equation*}
    C = I -
    \begin{pmatrix}
      2^{-s} & 2^{-s} & 2^{-s} \\
      2^{-2s}+2^{-s} & 2^{-2s} & 2^{-2s}+2^{1-s} \\
      2^{1-s} & 2^{1-s} & 0 \\
    \end{pmatrix}
  \end{equation*}
  and
  \begin{align*}
    \f{\calK_{n_0}}{s} &= 2^{-2s} \f{\Sigma}{s, \tfrac12, \calX_{n_0}}
    + 2^{-s} \f{\Sigma}{s, -\tfrac12, \calX_{n_0}} \\
    &\hspace*{8.6em}
    + 2^{-s} \bigl(2^{-s}+1\bigr) \f{\Sigma}{s, \tfrac12, \calZ_{n_0}} + \calI_{\calK_{n_0}}(s), \\
    &\;\calI_{\calK_{n_0}}(s) = \frac{x(n_0-1)}{(2n_0)^s} - \frac{x(n_0)}{(2n_0-1)^s}
    - \frac{2^{-s} y(n_0)}{(2n_0)^s} \\
    &\phantom{\;\calI_{\calK_{n_0}}(s) =}\hphantom{0}
    + 2^{-s} \sum_{n_0\leq n<2n_0} \frac{x(n)}{(n+1)^s}
    + \sum_{n_0\leq n<2n_0} \frac{y(n)}{n^s},
  \end{align*}
  and we have $\calJ_{n_0}$ and $\calL_{n_0}$ as in Lemma~\ref{lemma:meromorphic}.
  Our Dirichlet series is now again the solution of
  \begin{equation*}
    C
    \begin{pmatrix}
      \f{\calX_{n_0}}{s} \\ \f{\calY_{n_0}}{s} \\ \f{\calZ_{n_0}}{s}
    \end{pmatrix}
    =
    \begin{pmatrix}
      \f{\calJ_{n_0}}{s} \\ \f{\calK_{n_0}}{s} \\ \f{\calL_{n_0}}{s}
    \end{pmatrix}\!.
  \end{equation*}
  Note that this new system does not influence the zeros of the
  determinant $\f{\Delta}{s} = \det C$ (see step ``Determinant and
  Zeros'' in the proof of Lemma~\ref{lemma:meromorphic}).

  Using Cramer's rule yields
\begin{align*}
  \f{\calX_{n_0}}{s} &= \frac{1}{\f{\Delta}{s}}
  \bigl( (-2^{1-3s} - 5\cdot2^{-2s} + 1) \f{\calJ_{n_0}}{s} \\
  &\hspace*{4em}
  + 2^{-s}(1 + 2^{1-s}) \f{\calK_{n_0}}{s}
  + 2^{-s}(1 + 2^{1-s}) \f{\calL_{n_0}}{s} \bigr), \\
  \f{\calY_{n_0}}{s} &= \frac{1}{\f{\Delta}{s}}
  \bigl( 2^{-s}(2^{1-2s} + 5\cdot2^{-s} + 1) \f{\calJ_{n_0}}{s} \\
  &\hspace*{4em}
  + (1 - 2^{1-s})(1 + 2^{-s}) \f{\calK_{n_0}}{s}
  + 2^{1-s} \f{\calL_{n_0}}{s} \bigr), \\
  \f{\calZ_{n_0}}{s} &= \frac{1}{\f{\Delta}{s}}
  \bigl( 2^{1-s}(2^{-s} + 1) \f{\calJ_{n_0}}{s} \\
  &\hspace*{4em}
  + 2^{1-s} \f{\calK_{n_0}}{s}
  + (1 - 2^{1-s})(1 + 2^{-s}) \f{\calL_{n_0}}{s} \bigr).
\end{align*}
\end{remark}

\begin{remark}\label{remark:modified-functional-equation-residue}
  Corollary~\ref{corollary:pascal-rhombus:main} provides an
  asymptotic expansion for $x(n)$ in
  \eqref{eq:pascal-rhombus:main-asy} containing a continuous and
  $1$-periodic function
  \begin{equation*}
    \f{\Phi}{u} = \sum_{\ell\in\Z} \varphi_\ell \fexp{2\ell\pi i u}.
  \end{equation*}
  By using the functional equation of
  Remark~\ref{remark:modified-functional-equation} and Cramer's rule,
  we get the expression
  \begin{align*}
    \varphi_\ell = \frac{1}{\f{\Delta'}{\kappa+\chi_\ell}}
    \bigl( &(-2^{1-3\kappa-3\chi_\ell} - 5\cdot2^{-2\kappa-2\chi_\ell} + 1) \f{\calJ_{n_0}}{\kappa+\chi_\ell} \\
    &+ 2^{-\kappa-\chi_\ell}(1 + 2^{1-\kappa-\chi_\ell}) \f{\calK_{n_0}}{\kappa+\chi_\ell} \\
    &+ 2^{-\kappa-\chi_\ell}(1 + 2^{1-\kappa-\chi_\ell}) \f{\calL_{n_0}}{\kappa+\chi_\ell} \bigr).
  \end{align*}
  for the $\ell$th Fourier coefficient. Here $\f{\Delta'}{\kappa+\chi_\ell}$
  denotes the derivative of
  \begin{equation*}
    \f{\Delta}{s} = \det C
    = -(2\cdot 2^{-2s} + 3\cdot 2^{-s} - 1) (2\cdot 2^{-s} + 1)
  \end{equation*}
  evaluated at $\kappa+\chi_\ell$.
\end{remark}

To evaluate the expression for $\varphi_\ell$, thus obtaining the
values in Table~\ref{table:pascal-rhombus:fourier}, we need explicit
bounds. These bounds are provided by the following lemmata.

\begin{lemma}\label{lem:bound-dirichlet}
  Let $n_0\geq2$, and set $\sigma=\Re s$ and suppose $\sigma>2$.
  Let $\calD_{n_0}\in\set{\calX_{n_0},\calY_{n_0},\calZ_{n_0}}$.
  Then
  \begin{equation*}
    \abs{\f{\calD_{n_0}}{s}} \leq \frac{2(n_0-1)^{2-\sigma}}{\sigma-2}.
  \end{equation*}
\end{lemma}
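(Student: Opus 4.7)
The plan is to establish the pointwise bound $|d(n)| \le 2n$ for each $d\in\{x,y,z\}$ and $n\ge 1$, and then obtain the stated bound on the Dirichlet series by the standard integral comparison. The relation $\abs{\calD_{n_0}(s)} \le \sum_{n\ge n_0} |d(n)|/n^\sigma$ (for $\sigma = \Re s$) reduces the lemma to these two ingredients.

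The first step, and the only one requiring any work, is a simultaneous induction on $n$ for the bounds
\begin{equation*}
  x(n) \le 2n-1, \qquad y(n) \le 2n-1, \qquad z(n) \le 2n, \qquad n\ge 1,
\end{equation*}
directly from the recurrences~\eqref{eq:rec-pascal-rhombus:main}. The base case $n=1$ is immediate from $x(1)=y(1)=1$ and $z(1)=2$. For the inductive step, one feeds the bounds into each of the six recurrences in~\eqref{eq:rec-x}--\eqref{eq:rec-z}; for instance $x(2n)=x(n)+z(n)\le (2n-1)+2n = 4n-1 = 2(2n)-1$, $y(2n+1) = x(n+1)+z(n) \le (2n+1)+2n = 4n+1 = 2(2n+1)-1$, and $z(2n+1)=2y(n+1)\le 2(2n+1) = 2(2n+1)$; the remaining three lines are equally direct (the case $y(2n)$ with $n=1$ uses $x(0)=0$). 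This gives in particular $|d(n)|\le 2n$ for $d\in\{x,y,z\}$ and $n\ge 1$.

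The second step is a routine integral comparison. Since $t\mapsto t^{1-\sigma}$ is decreasing for $\sigma>2$,
\begin{equation*}
  \sum_{n\ge n_0} \frac{1}{n^{\sigma-1}}
  \;\le\; \int_{n_0-1}^{\infty} t^{1-\sigma}\,\dd t
  \;=\; \frac{(n_0-1)^{2-\sigma}}{\sigma-2}.
\end{equation*}
Combining with $|d(n)|\le 2n$ yields
\begin{equation*}
  \abs{\calD_{n_0}(s)}
  \;\le\; 2 \sum_{n\ge n_0} \frac{1}{n^{\sigma-1}}
  \;\le\; \frac{2(n_0-1)^{2-\sigma}}{\sigma-2},
\end{equation*}
which is the claim.

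There is no serious obstacle here; the only point that requires a moment of care is choosing the induction hypothesis sharp enough that $z(2n+1) = 2y(n+1)$ closes up on the nose (whence the asymmetric choice $y(n)\le 2n-1$ rather than $y(n)\le 2n$). Everything else is bookkeeping.
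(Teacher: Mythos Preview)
Your proof is correct and follows the same structure as the paper's: bound the coefficients by $2n$, then use the integral comparison $\sum_{n\ge n_0} n^{1-\sigma}\le \int_{n_0-1}^{\infty} t^{1-\sigma}\,\dd t$. The only difference is in how the bound $d(n)\le 2n$ is obtained: the paper dispatches it in one line by noting that $r_{i,j}=0$ for $\abs{j}>i$, so each relevant row has at most $2n$ potentially nonzero entries, whereas you run a simultaneous induction through the six recurrences~\eqref{eq:rec-pascal-rhombus:main}. Your route is slightly longer but entirely self-contained from the recurrences, while the paper's is shorter but appeals back to the combinatorial definition of Pascal's rhombus; both are perfectly adequate here.
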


\begin{proof}
  The coefficients $x(n)$, $y(n)$ and $z(n)$ are bounded by $2n$
  which follows from $r_{i,j}=0$ for $\abs{j}>i$. With $d(n)$ being the
  coefficients corresponding to $\f{\calD_{n_0}}{s}$, we obtain
  \begin{align*}
    \abs{\f{\calD_{n_0}}{s}} &\leq \sum_{n\geq n_0} d(n) n^{-\sigma}
    \leq 2 \sum_{n\geq n_0} n^{1-\sigma} \\
    &\leq 2 \int_{n=n_0-1}^\infty n^{1-\sigma} \dd n
    = 2 \left.\frac{n^{2-\sigma}}{2-\sigma}\right\rvert_{n=n_0-1}^\infty
    = \frac{2(n_0-1)^{2-\sigma}}{\sigma-2},
  \end{align*}
  and the result follows.
\end{proof}

\begin{lemma}\label{lem:bound:shift-sum}
  Suppose $\Re s > 1$, and let $\beta \neq 0$ be a complex number
  and $n_0 > 1 + \abs{\beta}$ be an integer.
  With $\gamma_{k_0}=\abs{1+(s-1)/(k_0+1)}$, choose $k_0\in\N$ such that
  \begin{equation}\label{eq:bound:shift-sum:cond}
    \gamma_{k_0} < \frac{n_0-1}{\abs\beta}.
  \end{equation}
  Let $\calD_{n_0}\in\set{\calX_{n_0},\calY_{n_0},\calZ_{n_0}}$.
  Then
  \begin{equation*}
    \abs[\Bigg]{\sum_{k\geq k_0} \binom{-s}{k} \beta^k \f{\calD_{n_0}}{s+k}}
    \leq \frac{2\, \abs\beta^{k_0} (n_0-1)^{3-\sigma-k_0}}{
      (\sigma+k_0-2)(n_0-1-\gamma_{k_0}\abs\beta)}
    \,\abs[\bigg]{\binom{-s}{k_0}}.
  \end{equation*}
\end{lemma}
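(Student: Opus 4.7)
The plan is to combine the uniform bound provided by Lemma~\ref{lem:bound-dirichlet} on each shifted series $\calD_{n_0}(s+k)$ with an explicit control of the binomial coefficients $\binom{-s}{k}$, and then to sum the resulting geometric series.

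First, I note that $\sigma+k-2>0$ for all $k\geq k_0$ (this is implicit in the hypotheses: since $n_0 > 1+\abs\beta$, condition~\eqref{eq:bound:shift-sum:cond} forces $k_0$ to be large enough that $\gamma_{k_0}$ is sufficiently close to~$1$, and we may additionally assume $k_0$ satisfies $\sigma+k_0>2$). Therefore Lemma~\ref{lem:bound-dirichlet} applies to every $\calD_{n_0}(s+k)$ and yields
\[
  \abs{\calD_{n_0}(s+k)} \leq \frac{2(n_0-1)^{2-\sigma-k}}{\sigma+k-2} \leq \frac{2(n_0-1)^{2-\sigma-k}}{\sigma+k_0-2}.
\]

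Next, I would analyse the ratio of consecutive binomial coefficients. From $\binom{-s}{k+1}=\binom{-s}{k}\cdot\frac{-(s+k)}{k+1}$ we obtain
\[
  \frac{\abs[\big]{\binom{-s}{k+1}}}{\abs[\big]{\binom{-s}{k}}} = \frac{\abs{s+k}}{k+1} = \abs[\Big]{1+\frac{s-1}{k+1}}.
\]
Writing $z_j\coloneqq(s-1)/(j+1)$, the hypothesis $\Re s > 1$ gives $\Re z_j>0$, and both $\Re z_j$ and $\abs{z_j}^2$ are positive and strictly decreasing in~$j$. Since $\abs{1+z_j}^2=1+2\Re z_j+\abs{z_j}^2$, the function $j\mapsto \abs{1+z_j}$ is strictly decreasing. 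Telescoping therefore yields, for every $k\geq k_0$,
\[
  \abs[\Big]{\binom{-s}{k}} \leq \abs[\Big]{\binom{-s}{k_0}}\,\gamma_{k_0}^{k-k_0}.
\]

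Substituting both estimates into the sum and pulling out the $k$-independent factors gives
\[
  \sum_{k\geq k_0} \abs[\Big]{\binom{-s}{k}} \abs\beta^k \abs{\calD_{n_0}(s+k)}
  \leq \frac{2(n_0-1)^{2-\sigma}\,\abs[\big]{\binom{-s}{k_0}}\,\gamma_{k_0}^{-k_0}}{\sigma+k_0-2}
  \sum_{k\geq k_0}\Bigl(\frac{\gamma_{k_0}\abs\beta}{n_0-1}\Bigr)^{k}.
\]
By~\eqref{eq:bound:shift-sum:cond} the geometric ratio is strictly less than one, so the tail evaluates to $(\gamma_{k_0}\abs\beta)^{k_0}(n_0-1)^{1-k_0}/(n_0-1-\gamma_{k_0}\abs\beta)$. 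Multiplying through cancels the factor $\gamma_{k_0}^{-k_0}$ and produces exactly the claimed expression.

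The main technical point is the monotonicity $\gamma_k\leq \gamma_{k_0}$ for $k\geq k_0$; this is precisely where the hypothesis $\Re s>1$ enters, as it ensures that $\Re z_j$ and $\abs{z_j}^2$ are simultaneously positive and decreasing. Everything else is bookkeeping of geometric-series sums and absolute values.
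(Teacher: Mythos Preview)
Your proof is correct and takes essentially the same approach as the paper: bound each $\calD_{n_0}(s+k)$ via Lemma~\ref{lem:bound-dirichlet}, control $\bigl|\binom{-s}{k}\bigr|$ by $\bigl|\binom{-s}{k_0}\bigr|\gamma_{k_0}^{\,k-k_0}$ through the telescoping product of ratios $\bigl|1+(s-1)/k'\bigr|$ (your explicit verification of their monotonicity via $|1+z|^2=1+2\Re z+|z|^2$ is in fact more detailed than the paper's), and then sum the resulting geometric series. One minor quibble: your opening justification for $\sigma+k_0>2$ is off—condition~\eqref{eq:bound:shift-sum:cond} does not force $k_0$ to be large—but the needed inequality follows immediately from $\sigma>1$ once $k_0\ge 1$, which is the intended range since the series $\Sigma$ being bounded starts at $k=1$.
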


\begin{proof}
  Let $k \ge k_0$. We rewrite the binomial coefficient as
  \begin{equation*}
    \binom{-s}{k} = \binom{-s}{k_0}
    (-1)^{k-k_0} \prod_{k'=k_0+1}^k \left(1+\frac{s-1}{k'}\right).
  \end{equation*}
  As condition~\eqref{eq:bound:shift-sum:cond} and $\Re s\geq1$ imply
  \begin{equation*}
    \abs[\bigg]{1+\frac{s-1}{k'}} \leq \gamma_{k_0}
  \end{equation*}
  for all $k'\geq k_0+1$, we obtain the bound
  \begin{equation*}
    \abs[\bigg]{\binom{-s}{k}}
    \leq \abs[\bigg]{\binom{-s}{k_0}} \gamma_{k_0}^{k-k_0}.
  \end{equation*}
  Using the above estimate and Lemma~\ref{lem:bound-dirichlet} yields
  \begin{align*}
    \abs[\bigg]{\sum_{k\geq k_0} \binom{-s}{k} \beta^k \f{\calD_{n_0}}{s+k}}
    &\leq \abs[\bigg]{\binom{-s}{k_0}}
    \sum_{k\geq k_0} \gamma_{k_0}^{k-k_0} \abs\beta^k 
    \frac{2 (n_0-1)^{2-\sigma-k}}{\sigma+k-2} \\
    &\leq \frac{2\, \abs\beta^{k_0} (n_0-1)^{2-\sigma-k_0}}{\sigma+k_0-2}
    \,\abs[\bigg]{\binom{-s}{k_0}}
    \sum_{k\geq k_0} \left(\frac{\gamma_{k_0}\abs\beta}{n_0-1}\right)^{k-k_0}.
  \end{align*}
  The result follows by evaluating the geometric sum.
\end{proof}

Lemma~\ref{lem:bound:shift-sum} is the key for computing the Fourier
coefficients using reliable arithmetic.
Following an approach found in Grabner and Hwang~\cite{Grabner-Hwang:2005:digit}
and Grabner and Heuberger~\cite{Grabner-Heuberger:2006:Number-Optimal}, we
choose $k_0$ sufficiently large such that Lemma~\ref{lem:bound:shift-sum}
yields a sufficiently good bound for computing the Dirichlet series as
explained in Remark~\ref{remark:modified-functional-equation}.
We do this recursively: for large $\Re s$, the bounds of
Lemma~\ref{lem:bound:shift-sum} are sufficient to compute the value of the
Dirichlet series with high precision. Then the formul\ae{} in
Remark~\ref{remark:modified-functional-equation} are used recursively to
compute the series also for smaller $\Re s$. Finally, we compute the required
residues using Remark~\ref{remark:modified-functional-equation-residue}.
} 


\bibliography{bib/cheub}
\bibliographystyle{bibstyle/amsplainurl}

\clearpage
\appendix

\movesectionresults

\section{Additional Notations}\label{additional-notation}
We use Iverson's convention $\iverson{\mathit{expr}}=1$ if
$\mathit{expr}$ is true and $0$ otherwise, popularised by Graham, Knuth, and Patashnik~\cite{Graham-Knuth-Patashnik:1994}. We use the notation
$z^{\underline{\ell}}\coloneqq z(z-1)\dotsm (z-\ell+1)$ for falling factorials.
We use $\binom{n}{k_1, \dotsc, k_r}$ for multinomial coefficients. We sometimes
write a binomial coefficient $\binom{n}{a}$ as $\bibinom{n}{a}{b}$ with $a+b=n$ when we want
to emphasise the symmetry and analogy to a multinomial coefficient.

\section{Decomposition into Periodic Fluctuations: Proof of Theorem~\ref{theorem:contribution-of-eigenspace}}
\label{section:proof-contribution-of-eigenspace}
\subsection{Upper Bound for Eigenvalues of \texorpdfstring{$C$}{C}}
We start with an upper bound for the eigenvalues of $C$.
\begin{lemma}\label{lemma:eigenvalue-spectral-radius-bound}
  Let $\lambda\in\sigma(C)$. Then $\abs{\lambda}\le q\rho$.
\end{lemma}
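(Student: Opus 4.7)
The plan is to bound the spectral radius of $C$ itself by $q\rho$ and then use the standard fact that every eigenvalue is at most the spectral radius. Expanding powers of $C$ as a sum over words in the $A_r$ will provide the link to the joint spectral radius.

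First I would expand
\begin{equation*}
  C^\ell = \Bigl(\sum_{r=0}^{q-1} A_r\Bigr)^\ell
  = \sum_{(r_1,\ldots,r_\ell)\in\{0,\ldots,q-1\}^\ell} A_{r_1}\cdots A_{r_\ell},
\end{equation*}
a sum of exactly $q^\ell$ matrix products. By the triangle inequality and the definition of $\rho_\ell$,
\begin{equation*}
  \norm{C^\ell} \le q^\ell \sup_{r_1,\ldots,r_\ell}\norm{A_{r_1}\cdots A_{r_\ell}} = q^\ell \rho_\ell^\ell,
\end{equation*}
so $\norm{C^\ell}^{1/\ell}\le q\,\rho_\ell$.

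Next I would invoke Gelfand's formula for a single matrix, which gives $\lim_{\ell\to\infty}\norm{C^\ell}^{1/\ell}$ equal to the spectral radius of $C$, together with the fact (recalled right after the definition of the joint spectral radius in the excerpt) that $\lim_{\ell\to\infty}\rho_\ell=\rho$. Passing to the limit $\ell\to\infty$ on both sides of the displayed inequality yields
\begin{equation*}
  \max_{\mu\in\sigma(C)}\abs{\mu} = \lim_{\ell\to\infty}\norm{C^\ell}^{1/\ell} \le q\,\rho.
\end{equation*}
In particular, $\abs{\lambda}\le q\rho$ for every $\lambda\in\sigma(C)$, which is the claim.

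I don't anticipate any real obstacle here: both of the limit statements used are standard (Gelfand's formula and the Fekete-type argument already cited in the paper for the joint spectral radius), and the algebra is a one-line multinomial expansion. The only mild subtlety is that one should not confuse $\rho$ with the chosen upper bound $R$; the estimate is cleanest in terms of $\rho$ itself, and the independence of $\rho$ from the particular matrix norm (noted in the paper) makes it unnecessary to worry about which norm is used in $\norm{\,\cdot\,}$.
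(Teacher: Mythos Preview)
Your proof is correct and follows essentially the same approach as the paper: expand $C^\ell$ as a sum of $q^\ell$ products of the $A_r$, bound $\norm{C^\ell}$ accordingly, and pass to $\ell$th roots and the limit. The only cosmetic difference is that the paper routes the estimate through the auxiliary $R>\rho$ (obtaining $\abs{\lambda}\le qR$ for every such $R$ and then letting $R\downarrow\rho$), whereas you work directly with $\rho_\ell$ and invoke Gelfand's formula; both arrive at the same inequality by the same mechanism.
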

\begin{proof}
  For $\ell\to\infty$, we have
  \begin{equation*}
    \abs{\lambda}^\ell
    \le \max \setm{\abs{\lambda}}{\lambda \in \sigma(C)}^\ell
    = \Oh[\big]{\norm{C^\ell}}
  \end{equation*}
  and
  \begin{equation*}
    \norm{C^\ell}
    \le \sum_{0\le r_1, \ldots, r_\ell<q}
    \norm{A_{r_1}\dotsm A_{r_\ell}}
    = \Oh{q^\ell R^\ell}
  \end{equation*}
  by \eqref{eq:bound-prod}. Taking $\ell$th roots and the limit $\ell\to\infty$
  yields $\abs{\lambda}\le qR$. This last inequality does not depend on
  our particular (cf.\@ Appendix~\ref{sec:definitions-notations}) choice
  of $R>\rho$, so the inequality is valid for all $R>\rho$, and
  we get the result.
\end{proof}

\subsection{Explicit Expression for the Summatory Function}
In this section, we give an explicit formula for $F(N)=\sum_{0\le n<N} f(n)$ in
terms of the matrices $A_r$, $B_r$ and $C$.

\begin{lemma}\label{lemma:explicit-summatory}
  Let $N$ be an integer with $q$-ary expansion
  $r_{\ell-1}\ldots r_0$. Then
  \begin{equation*}
    F(N)=\sum_{0\le j<\ell} C^j B_{r_j} A_{r_{j+1}}\dotsm
    A_{r_{\ell-1}} + \sum_{0\le j<\ell} C^j (I-A_0).
  \end{equation*}
\end{lemma}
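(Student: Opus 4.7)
My plan is to establish the formula by first deriving a one-step recursion for $F$ and then unfolding it along the $q$-ary digits of $N$.

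\emph{Step~1: the recursion.} I would first show that for all integers $M\ge 0$ and $0\le r<q$ with $qM+r\ge 1$,
\begin{equation*}
  F(qM+r) = C\,F(M) + B_r f(M) + (I-A_0).
\end{equation*}
To derive this, I split the summation range of $F(qM+r)$ by residue mod~$q$: for $r'<r$ the index $qm+r'$ runs over $0\le m\le M$, and for $r'\ge r$ over $0\le m\le M-1$. On each such index with $qm+r'\neq 0$ I use $f(qm+r')=A_{r'}f(m)$, while the exceptional term $f(0)=I$ differs from the value $A_0 f(0)$ predicted by the recursion by exactly $I-A_0$. Collecting the contributions yields $B_r F(M+1)+(C-B_r)F(M)+(I-A_0)$, and using $F(M+1)-F(M)=f(M)$ together with $B_r+(C-B_r)=C$ rearranges this into the displayed form.

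\emph{Step~2: iteration.} Setting $N_j\coloneqq\floor{N/q^j}$, the integer $N_j$ has $q$-ary expansion $r_{\ell-1}\dotsm r_j$, so $N_j=qN_{j+1}+r_j$; moreover $N_j\ge 1$ for $0\le j<\ell$ (because $r_{\ell-1}\neq 0$) and $N_\ell=0$. Iterating the recursion of Step~1 $\ell$ times along $N_0,N_1,\dotsc,N_\ell$ yields
\begin{equation*}
  F(N)=C^{\ell}F(N_\ell)+\sum_{0\le j<\ell}C^j\bigl(B_{r_j}f(N_{j+1})+(I-A_0)\bigr).
\end{equation*}
Since $F(N_\ell)=F(0)=0$ and $f(N_{j+1})=A_{r_{j+1}}\dotsm A_{r_{\ell-1}}$ by~\eqref{eq:f-as-product} (with empty product~$I$ when $j=\ell-1$), this is exactly the claim.

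\emph{Main obstacle.} The only delicate point is the constant correction $(I-A_0)$ in the recursion: it appears because~\eqref{eq:regular-matrix-sequence} explicitly excludes $qn+r=0$, so that $f(0)=I$ rather than~$A_0 I$. Once this correction is tracked correctly at every step, propagating it through $\ell$ levels of the recursion automatically produces the second sum $\sum_{0\le j<\ell} C^j(I-A_0)$ in the statement; without it one would obtain only the first sum, which is incorrect already for small examples such as $N=1$.
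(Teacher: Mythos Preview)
Your proof is correct and follows essentially the same approach as the paper: derive the one-step recursion $F(qM+r)=CF(M)+B_r f(M)+(I-A_0)$ (the paper writes the correction as $(I-A_0)\iverson{qN+r>0}$), then iterate it along the $q$-ary digits of $N$. Your derivation of Step~1 via $B_r F(M+1)+(C-B_r)F(M)$ is a minor presentational variant of the paper's computation, but the content is identical.
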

\begin{proof}
  We claim that
  \begin{equation}\label{eq:sum-recursion}
    F(qN+r)=C F(N) + B_r f(N) + (I-A_0)\iverson{qN+r > 0}
  \end{equation}
  holds for non-negative integers $N$ and $r$  with $0\le r<q$.

  We now prove \eqref{eq:sum-recursion}: Using
  \eqref{eq:regular-matrix-sequence} and $f(0)=I$ yields
  \begin{align*}
    F(qN+r)
    &= f(0)\, \iverson{qN+r > 0}
    + \sum_{\substack{0<qn+r'<qN+r\\0\le n\\ 0\le r'<q}} f(qn+r')\\
    &= f(0)\, \iverson{qN+r > 0}
    + \sum_{\substack{0<qn+r'<qN+r\\0\le n\\ 0\le r'<q}} A_{r'}f(n)\\
    &= \bigl(f(0)-A_{0}f(0)\bigr) \iverson{qN+r > 0}
    + \sum_{\substack{0\le qn+r'<qN+r\\0\le n\\ 0\le r'<q}} A_{r'}f(n)\\
    &= (I-A_0) \iverson{qN+r > 0}
    + \sum_{0\le n<N}\sum_{0\le r'<q} A_{r'}f(n)
    + \sum_{0\le r'<r} A_{r'}f(N)\\
    &= (I-A_0) \iverson{qN+r > 0}
    + CF(N)+B_{r}f(N).
  \end{align*}
  This concludes the proof of \eqref{eq:sum-recursion}.

  Iteration of \eqref{eq:sum-recursion} and using~\eqref{eq:f-as-product} yield the assertion of the lemma,
  cf.~\cite[Proposition~3.6]{Heuberger-Kropf-Prodinger:2015:output}.
\end{proof}

\subsection{Proof of Theorem~\ref{theorem:contribution-of-eigenspace}}

\begin{proof}[Proof of Theorem~\ref{theorem:contribution-of-eigenspace}]
  For readability, this proof is split into several
  steps.

  \proofparagraph{Setting}
  Before starting the actual proof, we introduce the setting which will be used
  to define the fluctuations $\Phi_k$.
  We will first introduce
  functions $\Psi_k$ defined on the infinite product space
  \begin{equation*}
    \Omega\coloneqq
    \setm[\big]{\bfx=(x_0, x_1, \ldots)}{
      x_j\in\set{0, \ldots,q-1} \text{ for $j\ge 0$}, x_0\neq 0}.
  \end{equation*}
  We equip it with the metric such that two elements~$\bfx\neq\bfx'$ with
  a common prefix of length~$j$ and $x_j\neq x'_j$ have distance~$q^{-j}$.
  We consider the map $\val\colon \Omega\to [0, 1]$ with
  \begin{equation*}
    \val(\bfx) \coloneqq \log_q\sum_{j\ge 0}x_jq^{-j},
  \end{equation*}
  cf. Figure~\ref{fig:commutative-diagram}. By using the assumption that the
  zeroth component of elements of $\Omega$ is assumed to be non-zero, we easily check that $\val$ is
  Lipschitz-continuous; i.e.,
  \begin{equation}\label{eq:lval:Lipschitz}
    \abs[\big]{\val(\bfx)-\val(\bfx')} = \Oh{q^{-j}}
  \end{equation}
  for $\bfx\neq\bfx'$ with a common prefix of length~$j$.

  \begin{figure}[htbp]
    \centering
    \begin{tikzcd}
      \Omega \arrow{rr}{\Psi}\arrow[shift left=0.5ex]{rd}{\val}&&\C^d\\
      &{[0, 1]}\arrow{ru}{\Phi}\arrow[dotted,shift left=0.5ex]{lu}{\repr}
    \end{tikzcd}
    \caption{Maps}
    \label{fig:commutative-diagram}
  \end{figure}
  For $y\in[0, 1)$, let $\repr(y)$ be the unique $\bfx\in\Omega$ with
  $\val(\bfx)=y$ such that $\bfx$ does not end on infinitely many $q-1$'s, i.e.,
  $\repr(y)$ represents a $q$-ary expansion of $q^y$. This means that
  $\val\circ\repr$ is the identity on $[0, 1)$.

  From the definition of the metric on $\Omega$,
  recall that a function
  $\Psi\colon \Omega\to\C^d$ is continuous if and only if for each
  $\varepsilon>0$, there is a $j$ such that
  $\norm{\Psi(\bfx')-\Psi(\bfx)}<\varepsilon$ holds for all $\bfx$ and
  $\bfx'$ that have a common prefix of length $j$.
  Further recall from the universal property of quotients that if such a continuous function $\Psi$ satisfies
  $\Psi(\bfx)=\Psi(\bfx')$ whenever $\val(\bfx)=\val(\bfx')$, then there is a
  unique continuous function $\Phi\colon [0, 1]\to\C^d$ such that $\Phi\circ\val=\Psi$.
  This will be used in the ``Descent''-step of the proof.

  \proofparagraph{Notation}
  Let $N$ have the $q$-ary expansion
  $r_{\ell-1}\ldots r_0$ and set
  \begin{equation*}
    F_1(N) \coloneqq \sum_{0\le j<\ell} C^j B_{r_j} A_{r_{j+1}}\ldots
    A_{r_{\ell-1}}, \qquad F_2(N) \coloneqq \sum_{0\le j<\ell} C^j(I-A_0)
  \end{equation*}
  so that $F(N)=F_1(N)+F_2(N)$ by Lemma~\ref{lemma:explicit-summatory}.

  We consider the Jordan chain $w=v_{0}'$, \ldots, $v_{m-1}'$ generated by $w$,
  i.e., $v_k'=w(C-\lambda I)^k$ for $0\le k<m$ and $v_{m-1}'$ is a left eigenvector
  of $C$. Thus we have $wC^j=\sum_{0\le
    k<m}\binom{j}{k}\lambda^{j-k}v_k'$  for all $j\ge 0$.
  If $\lambda\neq 0$, choose vectors $v_0$, \ldots, $v_{m-1} \in \C^d$ such that
  \begin{equation}\label{eq:C-sum-eigenvectors}
    wC^j=\lambda^j\sum_{0\le k<m}j^kv_k
  \end{equation}
  holds for all $j\ge 0$. These vectors are
  suitable linear combinations of the vectors $v_0'$, \ldots,
  $v_{m-1}'$. We note that we have
  \begin{equation}\label{eq:v_m-1-expression}
    v_{m-1}=\frac1{\lambda^{m-1}(m-1)!}v_{m-1}'.
  \end{equation}

  \proofparagraph{Second Summand}
  We claim that
  \begin{multline}\label{eq:constant-term}
    wF_2(N) = wK + N^{\log_q \lambda}\sum_{0\le k<m} (\log_q
    N)^k\Phi^{(2)}_{k}(\fractional{\log_q N}) \\
    + (\log_q N)^mw\vartheta_m
    + \iverson{\lambda = 0} \Oh{N^{\log_q R}}
  \end{multline}
  for suitable continuously differentiable functions $\Phi^{(2)}_{ k}$ on $\R$,
  $0\le k<m$. If $R=0$, then $\Oh{N^{\log_q R}}$ shall mean that the error
  vanishes for almost all $N$.

  Consider first the case that $\lambda \neq 1$.
  Because of $wC^j=w{C'}^j$ and $wT^{-1}DT=w$, see Appendix~\ref{section:constants-for-theorem}, we have
  \begin{align*}
    wF_2(N)&=\sum_{0\le j<\ell}w{C'}^j (I-A_0)\\
    &=w(I-{C'}^\ell)(I-C')^{-1}(I-A_0) = wK - wC^\ell (I-C')^{-1}(I-A_0).
  \end{align*}
  If $\lambda=0$, then $wC^\ell=0$ for almost all $\ell$. We may set
  $\Phi^{(2)}_k=0$ for $0\le k<m$ and \eqref{eq:constant-term} is shown.
  Otherwise, as
  we have $\ell-1=\floor{\log_q N}=\log_q N - \fractional{\log_q N}$ and
  by~\eqref{eq:C-sum-eigenvectors}, we can
  rewrite $wC^\ell$ as
  \begin{equation*}
    wC^\ell=\lambda^{\ell}\sum_{0\le k'<m}\ell^{k'}
    v_{k'}=\lambda^{1+\log_q N-\fractional{\log_q N}}\sum_{0\le k'<m}(\log_q
    N+1-\fractional{\log_q N})^{k'} v_{k'}.
  \end{equation*}
  Let
  \begin{equation*}
    G_2(L, \nu)\coloneqq-\lambda^{1-\nu}\sum_{0\le k'<m}(L+1-\nu)^{k'} v_{k'}(I-C')^{-1}(I-A_0)
  \end{equation*}
  for reals $L$ and $\nu$,
  i.e.,
  \begin{equation*}
    wF_2(N)=wK + \lambda^{\log_q N} G_2(\log_q N, \fractional{\log_q N}).
  \end{equation*}
  By the multinomial theorem, we have
  \begin{equation*}
    G_2(L, \nu)=-\lambda^{1-\nu}\sum_{0\le k<m}L^k\sum_{\substack{0\le r,\ 0\le s\\ k+r+s<m}}\trinom{k+r+s}{k}{r}{s}(-\nu)^s v_{k+r+s}(I-C')^{-1}(I-A_0).
  \end{equation*}
  This leads to a representation
  $G_2(L, \nu)=\sum_{0\le k<m}L^k\Phi^{(2)}_{ k}(\nu)$ for continuously differentiable functions
  \begin{equation*}
    \Phi_k^{(2)}(\nu)=-\lambda^{1-\nu}\sum_{\substack{0\le r,\ 0\le s\\ r+s<m-k}}\trinom{k+r+s}{k}{r}{s}(-\nu)^s v_{k+r+s}(I-C')^{-1}(I-A_0)
  \end{equation*}
  for $0\le k<m$. As the functions~$\Phi^{(2)}_{k}$ are continuously differentiable,
  they are Lipschitz
  continuous on compact subsets of $\R$. We note that in the case $k=m-1$, the
  only occurring summand is with $r=0$ and $s=0$, which implies that
  \begin{equation}\label{eq:fluctuation-2-m-1}
    \Phi_{m-1}^{(2)}(\nu) = -\lambda^{1-\nu}v_{m-1}(I-C')^{-1}(I-A_0).
  \end{equation}
  Rewriting $\lambda^{\log_q N}$ as
  $N^{\log_q \lambda}$ and recalling that $w\vartheta_m=0$ yields \eqref{eq:constant-term} for $\lambda\neq 1$.

  We now turn to the case $\lambda=1$. We use $wC^j=\sum_{0\le
    k<m}\binom{j}{k}v_k'$ for $j\ge 0$ as above.
  Thus
  \begin{align*}
    wF_2(N) &= \sum_{0\le j<\ell}\sum_{0\le k<m}\binom{j}{k}v'_{k}(I-A_0)\\
    &= \sum_{0\le k<m}v'_k(I-A_0)\sum_{0\le j<\ell}\binom{j}{k}\\
    &= \sum_{0\le k<m}v'_k(I-A_0) \binom{\ell}{k+1},
  \end{align*}
  where the identity \cite[(5.10)]{Graham-Knuth-Patashnik:1994} (``summation on the upper index'')
  has been used in the last step.

  Thus $wF_2(N)$ is a polynomial in $\ell$ of degree $m$. By writing
  $\ell=1+\log_qN-\fractional{\log_q N}$, we can again rewrite this as a
  polynomial in $\log_q N$ whose coefficients depend on $\fractional{\log_q N}$.
  The coefficient of $(\log_q N)^m$ comes from
  $v_{m-1}'(I-A_0)\binom{\ell}{m}$, therefore, this coefficient is
  \begin{equation*}
    \frac1{m!}v_{m-1}'(I-A_0)=\frac1{m!}w(C-I)^{m-1}(I-A_0)=w\vartheta_m.
  \end{equation*}
  The additional factor $T^{-1}(I-D)T$ in $\vartheta_m$ has been introduced in order
  to annihilate generalised eigenvectors to other eigenvalues.  By construction
  of $K$, we have $wK=0$. Thus we have shown \eqref{eq:constant-term} for
  $\lambda=1$, too.

  \proofparagraph{Lifting the Second Summand}
  For later use---at this point, this may seem to be quite artificial---we
  set $\Psi^{(2)}_{k}=\Phi^{(2)}_{k}\circ \val$. As $\Phi^{(2)}_{k}$ is continuously differentiable, it
  is Lipschitz continuous on $[0, 1]$. As $\val$ is also Lipschitz continuous,
  so is $\Psi_k^{(2)}$.

  \proofparagraph{First Summand}
  We now turn to $wF_1(N)$.
  If $\lambda=0$, we certainly have $\abs{\lambda}\le R$ and we are in one of
  the first two cases of this theorem. Furthermore, we have
  $wC^j=0$ for $j\ge m$, thus
  \begin{equation*}
    wF_1(N)=\Oh[\bigg]{\sum_{0\le j<m} R^{\ell-j}}
    = \Oh{R^{\ell}}
    = \Oh{N^{\log_q R}}
  \end{equation*}
  by using~\eqref{eq:bound-prod}.
  Together with~\eqref{eq:constant-term}, the result follows.

  From now on, we may assume that $\lambda\neq 0$.
  By using~\eqref{eq:C-sum-eigenvectors}, we have
  \begin{equation}\label{eq:w-F-1-n}
    wF_1(N)=\sum_{0\le j<\ell} \lambda^j\biggl(\sum_{0\le k<m}j^kv_k \biggr)B_{r_j} A_{r_{j+1}}\ldots
    A_{r_{\ell-1}}.
  \end{equation}
  We first consider the case that $\abs{\lambda}<R$ (corresponding to
  Theorem~\ref{theorem:contribution-of-eigenspace}, \itemref{item:small-eigenvalue}). We get
  \begin{align*}
    wF_1(N) &= \Oh[\bigg]{\sum_{0\le j<\ell}\abs{\lambda}^j j^{m-1}
    R^{\ell-j}} \\
    &= \Oh[\bigg]{R^{\ell} \sum_{0\le j<\ell}
    j^{m-1}\Bigl(\frac{\abs{\lambda}}{R}\Bigr)^j}
    =\Oh{R^{\ell}}
    =\Oh{N^{\log_q R}},
  \end{align*}
  where \eqref{eq:bound-prod} was used.
  Together with \eqref{eq:constant-term}, the result follows.

  Next, we consider the case that $\abs{\lambda}=R$
  (Theorem~\ref{theorem:contribution-of-eigenspace}, \itemref{item:R-eigenvalue}).
  In that case, we get
  \begin{equation*}
    wF_1(N)=\Oh[\bigg]{\sum_{0\le j<\ell}\abs{\lambda}^j j^{m-1}
      R^{\ell-j}}
    = \Oh[\bigg]{R^\ell\sum_{0\le j<\ell}j^{m-1}}
    =\Oh{R^\ell \ell^m}.
  \end{equation*}
  Again, the result follows.

  From now on, we may assume that $\abs{\lambda}>R$. We set $Q\coloneqq
  \abs{\lambda}/R$ and note that $1<Q\le q$ by assumption and Lemma~\ref{lemma:eigenvalue-spectral-radius-bound}.
  We claim that there are continuous functions $\Psi^{(1)}_{k}$ on $\Omega$ for $0\le k<m$
  such that
  \begin{equation}\label{eq:first-term}
    wF_1(N) = N^{\log_q \lambda}\sum_{0\le k<m} (\log_q N)^k
    \f[\big]{\Psi^{(1)}_{k}}{\repr(\fractional{\log_q N})}
  \end{equation}
  and such that
  \begin{equation}\label{eq:quasi-Hoelder}
    \norm[\big]{\Psi^{(1)}_{ k}(\bfx)-\Psi^{(1)}_{ k}(\bfx')}=\Oh{j^{m-1} Q^{-j}}
  \end{equation}
  when the first $j$ entries of $\bfx$ and $\bfx'\in\Omega$ coincide.

    Write $N=q^{\ell-1+\fractional{\log_q N}}$ and let $\bfx=\repr(\fractional{\log_q N})$,
    i.e., $\bfx$ is the $q$-ary expansion of $q^{\fractional{\log_q N}}=N/q^{\ell-1}\in[1, q)$
    ending on infinitely many zeros. This means that $x_j=r_{\ell-1-j}$
    for $0\le j<\ell$ and $x_j=0$ for $j\ge \ell$.
    Reversing the order of summation in \eqref{eq:w-F-1-n} yields
    \begin{align*}
      wF_1(N)=\lambda^{\ell-1}\sum_{0\le j<\ell}\lambda^{-j}\biggl(\sum_{0\le k<m}(\ell-1-j)^kv_k \biggr)B_{x_j} A_{x_{j-1}}\ldots
    A_{x_0}.
    \end{align*}
    For $j\ge \ell$, we have $x_j=0$ and therefore $B_{x_j}=0$. Thus we may
    extend the sum to run over all $j\ge 0$, i.e.,
    \begin{equation*}
      wF_1(N)=\lambda^{\ell-1}\sum_{j\ge 0}\lambda^{-j}\biggl(\sum_{0\le k<m}(\ell-1-j)^kv_k \biggr)B_{x_j} A_{x_{j-1}}\ldots
    A_{x_0}.
    \end{equation*}
    We insert $\ell-1=\log_q N - \fractional{\log_q N}$ and obtain
    \begin{equation*}
      wF_1(N)=\lambda^{\log_q
        N}
      \f[\big]{G_1}{\log_q N, \repr(\fractional{\log_q N})}
    \end{equation*}
    where
    \begin{align*}
      G_1(L, \bfx)&=\lambda^{-\val(\bfx)}\sum_{j\ge 0}\lambda^{-j}\biggl(\sum_{0\le
        k<m}(L-\val(\bfx) - j)^kv_k \biggr)B_{x_j} A_{x_{j-1}}\ldots
    A_{x_0}\\
&=\lambda^{-\val(\bfx)}\sum_{j\ge 0}\lambda^{-j}\biggl(\sum_{\substack{0\le a,\ 0\le r,\
  0\le s\\a+r+s<m}}L^a
  (-j)^r \trinom{a+r+s}{a}{r}{s}\\&\hspace*{11.225em}\times\bigl(-\val(\bfx)\bigr)^{s}v_{a+r+s} \biggr)B_{x_j} A_{x_{j-1}}\ldots
    A_{x_0}
    \end{align*}
    for $L\in\R$ and $\bfx\in\Omega$. Note that in contrast to $G_2$, the second argument of $G_1$ is an element of
    $\Omega$ instead of $\R$.
    Collecting $G_1(L, \bfx)$ by powers of $L$, we get
    \begin{equation*}
      G_1(L, \bfx) = \sum_{0\le k<m} L^k \Psi^{(1)}_{ k}(\bfx)
    \end{equation*}
    where
    \begin{equation*}
      \Psi^{(1)}_k(\bfx) = \sum_{j\ge 0}\lambda^{-j}\sum_{0\le r<m-k}j^r
      \f[\big]{\psi_{kr}}{\val(\bfx)} B_{x_j}A_{x_{j-1}}\ldots A_{x_0}
    \end{equation*}
    for functions
    \begin{equation*}
      \psi_{kr}(\nu)=\lambda^{-\nu}
  (-1)^r\sum_{0\le s<m-k-r} \trinom{k+r+s}{k}{r}{s}(-\nu)^{s}v_{k+r+s}
    \end{equation*}
    which are continuously differentiable and therefore Lipschitz
    continuous on the unit interval.
    This shows \eqref{eq:first-term}.
    For $k=m-1$, only summands with $r=s=0$ occur, thus
    \begin{equation}\label{eq:fluctuation-1-m-1}
      \Psi_{m-1}^{(1)}(\bfx)=\sum_{j\ge 0}\lambda^{-j-\val(\bfx)}v_{m-1}B_{x_j}A_{x_{j-1}}\ldots A_{x_0}.
    \end{equation}

    Note that $\Psi^{(1)}_{ k}(\bfx)$ is majorised by
    \begin{equation*}
      \Oh[\bigg]{\sum_{j\ge 0} \abs{\lambda}^{-j} j^{m-1} R^{j}}
    \end{equation*}
    according to \eqref{eq:bound-prod}.
    We now prove \eqref{eq:quasi-Hoelder}. So let $\bfx$ and $\bfx'$ have a
    common prefix of length $i$. Consider the summand of $\Psi^{(1)}_k(\bfx)$ with index $j$.
    First consider the case that $j<i$. For all $r$, we have
    \begin{equation*}
      \norm[\big]{\f[\big]{\psi_{kr}}{\val(\bfx)}-\f[\big]{\psi_{kr}}{\val(\bfx')}}
      = \Oh{q^{-i}}
    \end{equation*}
    due to Lipschitz continuity of $\psi_{kr}\circ \val$.
    As the matrix product~$A_{x_{j-1}} \ldots A_{x_0}$
    is the same for $\bfx$ and $\bfx'$, the
    difference with respect to this summand is bounded by
    \begin{equation*}
      \Oh[\big]{\abs{\lambda}^{-j}j^{m-1}q^{-i}R^{j}}
      = \Oh{q^{-i}j^{m-1}Q^{-j}}.
    \end{equation*}
    Thus the total
    contribution of all summands with $j<i$ is $\Oh{q^{-i}}$.
    Any summand with $j \ge i$ is bounded by
    $\Oh[\big]{\abs{\lambda}^{-j}j^{m-1}R^{j}} = \Oh{j^{m-1}Q^{-j}}$,
    which leads to a total contribution of $\Oh{i^{m-1}Q^{-i}}$.
    Adding the two bounds leads to a bound of $\Oh{i^{m-1}Q^{-i}}$, as
    requested.

    \proofparagraph{Descent}
    By setting $\Psi_k(\bfx)=\Psi^{(1)}_{k}(\bfx)+\Psi^{(2)}_{k}(\bfx)$, we obtain
    \begin{equation}\label{eq:w-F-n}
      wF(N)=wK + N^{\log_q\lambda} \sum_{0\le k<m}(\log_q
      N)^k\Psi_k(\repr(\fractional{\log_q N})) +(\log_q N)^mw\vartheta_m
    \end{equation}
    and
    \begin{equation}\label{eq:Psi-continuity}
      \norm{\Psi_{k}(\bfx)-\Psi_{k}(\bfx')}=\Oh{j^{m-1}Q^{-j}}
    \end{equation}
    whenever $\bfx$ and $\bfx'\in\Omega$ have a common prefix of length $j$.

    It remains to show that $\Psi_k(\bfx)=\Psi_k(\bfx')$ holds whenever
    $\val(\bfx)=\val(\bfx')$ or $\val(\bfx)=0$ and $\val(\bfx')=1$.

    Choose $\bfx$ and $\bfx'$ such that one of the above
    two conditions on $\val$ holds and such that $x_j=0$ for $j\ge j_0$ and
    $x'_j=q-1$ for $j\ge j_0$. Be aware that now the prefixes of
    $\bfx$ and $\bfx'$ of length $j_0$ do not coincide except for the trivial
    case $j_0=0$.

    Fix some $j\ge j_0$ and set $\bfx''$ to be
    the prefix of $\bfx'$ of length $j$, followed by infinitely many zeros.
    Note that we have $q^{\val(\bfx'')}=q^{\val(\bfx')}-q^{-(j-1)}$. Set
    $n=q^{j-1+\val(\bfx'')}$. By construction, we have
    $n+1=q^{j-1+\val(\bfx)+\iverson{\val(\bfx)=0}}$. This implies
    $\repr(\fractional{\log_q n})=\bfx''$ and
    $\repr(\fractional{\log_q(n+1)})=\bfx$. Taking the difference of
    \eqref{eq:w-F-n} for $n+1$ and $n$ yields
    \begin{multline*}
      wf(n)=(n+1)^{\log_q \lambda} \sum_{0\le k<m}(\log_q (n+1))^k
      \Psi_k(\bfx)  - n^{\log_q \lambda} \sum_{0\le k<m} (\log_q n)^k
      \Psi_k(\bfx'') \\+\big((\log_q(n+1))^m-(\log_q n)^m\big)w\vartheta_m.
    \end{multline*}
    We estimate $n+1$ as $n(1+\Oh{1/n})$ and get
    \begin{equation}\label{eq:Tenenbaum-2}
      wf(n)=n^{\log_q \lambda }\sum_{0\le k<m} (\log_q n)^k
      \bigl(\Psi_k(\bfx)-\Psi_k(\bfx'')\bigr)
      + \Oh[\big]{n^{\log_q \abs{\lambda} -1}(\log n)^{m-1}}.
    \end{equation}
    We have $wf(n)=\Oh{R^j}=\Oh{R^{\log_q n}}=\Oh{n^{\log_q R}}$ by~\eqref{eq:f-as-product} and~\eqref{eq:bound-prod}. By \eqref{eq:Psi-continuity},
    \begin{equation*}
      \norm[\big]{\Psi_k(\bfx'')-\Psi_k(\bfx')}
      = \Oh[\big]{(\log n)^{m-1}n^{-\log_q Q}}
    \end{equation*}
    which is used below to replace $\bfx''$ by $\bfx'$.
    Inserting these estimates in \eqref{eq:Tenenbaum-2} and dividing by
    $n^{\log_q \lambda}$ yields
    \begin{equation}\label{eq:Tenenbaum-3}
      \sum_{0\le k<m}(\log_q n)^k(\Psi_k(\bfx')-\Psi_k(\bfx))
      = \Oh[\big]{n^{-\log_q Q} (\log n)^{2m-2}}.
    \end{equation}
    Note that $\Psi_k(\bfx')-\Psi_k(\bfx)$ does not depend on $j$. Now we let
    $j$ (and therefore $n$) tend to infinity. We see that
    \eqref{eq:Tenenbaum-3} can only remain true if
    $\Psi_k(\bfx')=\Psi_k(\bfx)$ for $0\le k<m$, which we had set out to show.

    Therefore, $\Psi_k$ descends to a continuous function $\Phi_k$ on $[0, 1]$ with
    $\Phi_k(0)=\Phi_k(1)$; thus $\Phi_k$ can be extended to a $1$-periodic continuous
    function.

    \proofparagraph{Hölder Continuity}For the proof of Hölder continuity, we
    first claim that for $0\le y<y'''<1$,
    we have
    \begin{equation}\label{eq:Hoelder-1}
      \norm[\big]{\Phi_k(y''')-\Phi_k(y)}
      = \Oh[\big]{(\log(q^{y'''}-q^{y}))^{m-1}(q^{y'''}-q^y)^{\log_q Q}}
    \end{equation}
    as $y'''\to y$.
    To prove this, let $\bfx\coloneqq \repr(y)$ and $\bfx'''\coloneqq
    \repr(y''')$.
    Let $\ell$ be the length of the longest common prefix of $\bfx$ and
    $\bfx'''$ and choose $j\ge 0$ such that $q^{-j}\le q^{y'''}-q^y< q^{-j+1}$.
    We define $\bfx'$ and $\bfx''\in\Omega$ such that
    \begin{alignat*}{4}
      \bfx&=(x_0,x_1,\ldots, x_{\ell-1}, x_{\ell},{}&& x_{\ell+1},{}&& x_{\ell+2},{}&& \ldots),\\
      \bfx'&=(x_0, x_1, \ldots, x_{\ell-1}, x_{\ell},{}&& q-1,{}&& q-1,{}&& \ldots),\\
      \bfx''&=(x_0, x_1, \ldots, x_{\ell-1}, x_{\ell}+1,{}&& 0,{}&& 0,{}&& \ldots),\\
      \bfx'''&=(x_0, x_1, \ldots, x_{\ell-1}, x'''_{\ell},{}&& x'''_{\ell+1},{}&&
      x'''_{\ell+2},{}&& \ldots)
    \end{alignat*}
    and set $y'\coloneqq\val(\bfx')$ and $y''\coloneqq\val(\bfx'')$.
    As $\val(\bfx)=y<y'''=\val(\bfx''')$, we have $x'''_\ell>x_\ell$. We
    conclude that $y\leq y'=y''\leq y'''$. Therefore,
    \begin{equation*}
      q^{y'}-q^{y} \le q^{y'''}-q^{y}< q^{-j+1},
    \end{equation*}
    so in view of the fact that each entry of
    $\bfx'$ is greater or equal than the corresponding entry of $\bfx$,
    the expansions $\bfx$ and $\bfx'$ must have a common prefix of length $j$.
    Similarly, the expansions $\bfx''$ and $\bfx'''$ must have a common prefix
    of length~$j$. Thus \eqref{eq:Psi-continuity} implies that
    \begin{align*}
      \norm[\big]{\Phi_k(y''')-\Phi_k(y)}
      &\le \norm[\big]{\Phi_k(y''')-\Phi_k(y'')}+
      \norm[\big]{\Phi_k(y')-\Phi_k(y)}\\
      &= \norm[\big]{\Psi_k(\bfx''')-\Psi_k(\bfx'')}+
      \norm[\big]{\Psi_k(\bfx')-\Psi_k(\bfx)}
      = \Oh{j^{m-1}Q^{-j}}.
    \end{align*}
    Noting that $-j = \log_q(q^{y'''}-q^y) + \Oh{1}$ leads to~\eqref{eq:Hoelder-1}.

    In order to prove Hölder continuity with exponent $\alpha<\log_q Q$, we first
    note that Lipschitz-continuity of $y\mapsto q^y$ on the interval $[0, 1]$
    shows that \eqref{eq:Hoelder-1} implies
    \begin{equation*}
      \norm[\big]{\Phi_k(y''')-\Phi_k(y)}
      = \Oh[\big]{(y'''-y)^{\alpha}}.
    \end{equation*}
    This can then easily be extended to arbitrary reals $y<y'''$
    by periodicity of $\Phi_k$ because it is sufficient to consider small
    $y'''-y$ and the interval may be subdivided at an integer between $y$ and $y'''$.

    \proofparagraph{Constant Dominant Fluctuation} Under the additional
    assumption that the vector~$w(C-I)^{m-1}=v_{m-1}'$ is a left eigenvector to all
    matrices $A_0$, \ldots, $A_{q-1}$ associated with the eigenvalue $1$, the same holds for $v_{m-1}$
    by~\eqref{eq:v_m-1-expression}. Then $v_{m-1}$ is also a left eigenvector of
    $C$ associated with the eigenvalue $q$. In particular, $\lambda=q\neq 1$.

    We can compute $\Phi_{m-1}^{(2)}(\nu)$ using
    \eqref{eq:fluctuation-2-m-1}. As $v_{m-1}\in W_{q}$, $v_{m-1}C=v_{m-1}C'$
    by definition of $C'$, see Appendix~\ref{section:constants-for-theorem}, which implies that
    $v_{m-1}(I-C')^{-1}=\frac1{1-q}v_{m-1}$. As $v_{m-1}(I-A_0)=0$ by
    assumption, we conclude that $\Phi_{m-1}^{(2)}(\nu)=0$ in this case.

    We use \eqref{eq:fluctuation-1-m-1} to compute $\Psi_{m-1}^{(1)}(\bfx)$.
    By assumption, $v_{m-1}B_{x_j}=x_j v_{m-1}$ which implies that
    \begin{equation*}
      \Psi_{m-1}^{(1)}(\bfx)
      = q^{-\val(\bfx)} \biggl(\sum_{j\ge 0}q^{-j}x_j\biggr) v_{m-1}
      =q^{-\val(\bfx)}q^{\val(\bfx)}v_{m-1}=v_{m-1}
    \end{equation*}
    by definition of $\val$.

    Together with \eqref{eq:v_m-1-expression}, we obtain the assertion.
\end{proof}

\subsection{Proof of Corollary~\ref{corollary:main}}\label{section:proof:corollary-main}

\begin{proof}[Proof of Corollary~\ref{corollary:main}]
  We denote the rows of $T$ as $w_1$, \ldots, $w_d$ and the columns of $T^{-1}$
  by $t_1$, \ldots, $t_d$. Thus $\sum_{j=1}^d t_jw_j=I$ and $w_j$ is a
  generalised left eigenvector of $C$ of some rank $m_j$ corresponding to some
  eigenvalue
  $\lambda_j\in\sigma(C)$. Theorem~\ref{theorem:contribution-of-eigenspace} and
  the fact that there are no eigenvalues of $C$ of absolute value between
  $\rho$ and $R$
  then immediately imply that
  \begin{align*}
    F(N) &= \sum_{j=1}^d t_j w_j F(N) \\
    &= K + \sum_{j=1}^d (\log_q N)^{m_j} t_jw_j \vartheta_{m_j}\\
    &\phantom{= K}\; + \sum_{\substack{1\le j\le d\\\abs{\lambda_j}>\rho }} N^{\log_q \lambda_j}
    \sum_{0\le k<m_j}(\log_q N)^k t_j\Psi_{jk}(\fractional{\log_q N}) \\
    &\phantom{= K}\; + \iverson{\exists \lambda\in\sigma(C)\colon \abs{\lambda}\le\rho}
    \Oh[\big]{N^{\log_q R}(\log N)^{\max\set{0}\cup \setm{m_j}{\abs{\lambda_j}=R}}}
  \end{align*}
  for some  $1$-periodic Hölder-continuous functions $\Psi_{jk}$ with exponent
  less than $\log_q\abs{\lambda_j}/R$. The first summand $K$ as well as the
  error term already coincide with the result stated in the corollary.
  From Appendix~\ref{section:constants-for-theorem} we recall that $w_j\vartheta_{m_j}=0$
  for $\lambda_j\neq 1$.

  We set
  \begin{equation*}
    \Phi_{\lambda k}(u)\coloneqq  \sum_{\substack{1\le j\le
        d\\\lambda_j=\lambda\\k<m_j}}
    \bigl(t_j\Psi_{jk}(u) +\iverson{\lambda=1}\iverson{m_j=k}t_jw_j\vartheta_{m_j}\bigr)
  \end{equation*}
  for $\lambda\in\sigma(C)$ with $\abs{\lambda}>\rho$ and $0\le k<m(\lambda)$.

  Then we still have to account for
  \begin{equation}\label{eq:phi-m-1-sum}
    (\log_q N)^{m(1)}\sum_{\substack{1\le j\le d\\\lambda_j=1\\m_j=m(1)}}t_jw_j\vartheta_{m(1)}.
  \end{equation}
  The factor $(C-I)^{m(1)-1}$ in the definition of $\vartheta_{m(1)}$ implies that
  $w_j\vartheta_{m(1)}$ vanishes unless $\lambda_j=1$ and $m_j=m(1)$. Therefore, the
  sum in \eqref{eq:phi-m-1-sum} equals $\vartheta$.
\end{proof}

\section{Meromorphic Continuation of the Dirichlet Series: Proof of
  Theorem~\ref{theorem:Dirichlet-series}}
\label{section:proof:Dirichlet-series}
For future use, we state an estimate for the binomial
coefficient. Unsurprisingly, it is a consequence of Stirling's formula.
\begin{lemma}\label{lemma:binomial-coefficient-asymptotics}
  Let $k\in\Z$, $k\ge 0$. Then
  \begin{equation}\label{eq:binomial-coefficient-estimate}
    \abs[\bigg]{\binom{-s}{k}}\sim \frac1{\Gamma(s)}k^{\Re s-1}
  \end{equation}
  uniformly for $s$ in a compact subset of $\C$ and $k\to\infty$.
\end{lemma}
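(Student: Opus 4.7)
The plan is to rewrite the binomial coefficient in terms of Gamma functions and then invoke the standard consequence of Stirling's formula for quotients of Gamma values.

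First I would use the identity
\begin{equation*}
  \binom{-s}{k} = \frac{(-s)(-s-1)\dotsm (-s-k+1)}{k!}
  = (-1)^k \frac{\Gamma(s+k)}{\Gamma(s)\,\Gamma(k+1)},
\end{equation*}
valid away from the non-positive integers, with the evident continuous interpretation (both sides vanish for large enough $k$) at those exceptional points. This reduces matters to controlling the ratio $\Gamma(s+k)/\Gamma(k+1)$.

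Next I would apply the classical asymptotic
\begin{equation*}
  \frac{\Gamma(z+a)}{\Gamma(z+b)} \sim z^{a-b} \qquad (z\to\infty)
\end{equation*}
uniformly for $a$, $b$ in compact subsets of $\C$ and $z$ in any sector avoiding the negative real axis; specialising to $z=k$, $a=s$, $b=1$ yields
\begin{equation*}
  \frac{\Gamma(s+k)}{\Gamma(k+1)} \sim k^{s-1}
\end{equation*}
uniformly for $s$ in a compact subset of $\C$. Since $k$ is a positive integer, $\abs{k^{s-1}} = k^{\Re s - 1}$, and taking absolute values produces \eqref{eq:binomial-coefficient-estimate}.

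There is essentially no obstacle beyond correctly invoking Stirling; the only minor nuisance is the behaviour at poles of $\Gamma(s)$. A clean way to handle this uniformly is to phrase the intermediate assertion as a statement about the entire function
\begin{equation*}
  \Gamma(s)\binom{-s}{k} = (-1)^k\,\frac{\Gamma(s+k)}{\Gamma(k+1)},
\end{equation*}
to which Stirling's estimate applies on every compact set in $\C$ without exception, and then to take absolute values at the very end.
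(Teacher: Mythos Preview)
Your proof is correct and follows essentially the same route as the paper: rewrite $\binom{-s}{k}$ as $(-1)^k\Gamma(s+k)/\bigl(\Gamma(s)\Gamma(k+1)\bigr)$ and then invoke the Stirling-type asymptotic $\Gamma(z+a)/\Gamma(z+b)\sim z^{a-b}$ (this is exactly the DLMF reference the paper cites). Your additional remark about phrasing the estimate in terms of the entire function $\Gamma(s)\binom{-s}{k}$ to handle compact sets that meet the non-positive integers is a nice clarification that the paper leaves implicit.
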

\begin{proof}
  By \cite[(5.14)]{Graham-Knuth-Patashnik:1994} (``negating the upper index''), we rewrite the binomial coefficient as
  \begin{equation*}
    \binom{-s}{k}=(-1)^{k}\binom{s+k-1}{k}=\frac{(-1)^k}{\Gamma(s)}\frac{\Gamma(s+k)}{\Gamma(k+1)}.
  \end{equation*}
  Thus~\eqref{eq:binomial-coefficient-estimate} follows by \DLMF{5.11}{12}
  (which is an easy consequence of Stirling's formula for the Gamma function).
\end{proof}

\begin{proof}[Proof of Lemma~\ref{lemma:shifted-Dirichlet}]
  We have
  \begin{equation}\label{eq:Dirichlet-shifted:Sigma-as-diff}
    \f{\Sigma}{s, \beta, \calD}
    = \sum_{n\ge n_0} \bigl((n+\beta)^{-s}-n^{-s}\bigr) d(n)
  \end{equation}
  for $\Re s>\log_q R'+ 1$.
  We note that
  \begin{equation*}
    (n+\beta)^{-s} - n^{-s}
    = n^{-s}\Bigl(\Bigl(1+\frac{\beta}{n}\Bigr)^{-s} - 1 \Bigr)
    = \Oh[\big]{\abs{s}n^{-\Re s-1}}.
  \end{equation*}
  Therefore,
  \begin{equation*}
    \f{\Sigma}{s, \beta, \calD} = \Oh[\bigg]{\abs{s}\sum_{n\ge n_0}n^{\log_q R'-\Re s-1}},
  \end{equation*}
  and the series
  converges for $\Re s>\log_q R'$. As this holds for all $R'>\rho$, we obtain
  $\f{\Sigma}{s, \beta, \calD}=\Oh{\abs{\Im s}}$ as $\abs{\Im s}\to\infty$
  uniformly for $\log_q \rho + \delta \le \Re s \le \log_q \rho+\delta+1$.
  In the language of \cite[\S~III.3]{Hardy-Riesz:1915},
  $\f{\Sigma}{s, \beta, \calD}$ has order at most $1$ for $\log_q \rho + \delta \le \Re s \le \log_q
  \rho+\delta+1$. As $\log_q \rho+\delta+1$ is larger
  than the abscissa of absolute convergence of $\f{\Sigma}{s, \beta, \calD}$, it is clear that
  $\f{\Sigma}{s, \beta, \calD}=\Oh{1}$ for $\Re s=\log_q \rho+\delta+1$, i.e., $\f{\Sigma}{s, \beta, \calD}$ has order at most $0$ for
  $\Re s=\log_q \rho+\delta+1$. By Lindelöf's theorem
  (see \cite[Theorem~14]{Hardy-Riesz:1915}), we conclude that
  $\f{\Sigma}{s, \beta, \calD}=\Oh[\big]{\abs{\Im s}^{\mu_\delta(\Re s)}}$ for $\log_q \rho + \delta\le \Re s\le
  \log_q \rho +\delta+1$.

  For $\Re s > \log_q R' + 1$, we may
  rewrite~\eqref{eq:Dirichlet-shifted:Sigma-as-diff}
  using the binomial series as
  \begin{align}\label{eq:shifted-Dirichlet:diff:inner-sum}
    \f{\Sigma}{s, \beta, \calD} &=
    \sum_{n\ge n_0}{n^{-s}}\sum_{k\ge 1}\binom{-s}{k}\frac{\beta^k}{n^k} d(n)\notag\\
    &= \sum_{k\ge 1}
    \binom{-s}{k} \beta^k \sum_{n\ge n_0} n^{-(s+k)} d(n).
  \end{align}
  Switching the order of summation was legitimate because
  \begin{align*}
    \norm[\bigg]{\sum_{n\ge n_0} n^{-(s+k)} d(n)}
    &\le \sum_{n\ge n_0} n^{-(\Re s+k)} \norm{d(n)}\\
    &= \sum_{n\ge n_0} \Oh[\big]{n^{\log_q R'-\Re s -k}}
    = \Oh[\big]{n_0^{\log_q R'-\Re s-k+1}}
  \end{align*}
  for $\Re s+k>\log_q R'+1$ and Lemma~\ref{lemma:binomial-coefficient-asymptotics} imply absolute and
  uniform convergence for $s$ in a compact set.
  Noting that the previous arguments hold again for all $R'>\rho$ and that
  the inner sum in \eqref{eq:shifted-Dirichlet:diff:inner-sum}
  is $\calD(s+k)$ completes the proof.
\end{proof}

\begin{proof}[Proof of Theorem~\ref{theorem:Dirichlet-series}]
  As $f(n)=\Oh{R^{\log_q n}}=\Oh{n^{\log_q R}}$ by \eqref{eq:f-as-product} and
  \eqref{eq:bound-prod}, the Dirichlet series
  $\calF_{n_0}(s) = \sum_{n \ge n_0} n^{-s} f(n)$
  (see Appendix~\ref{sec:definitions-notations})
  converges absolutely and uniformly on compact sets for $\Re s>\log_q R+1$.
  As this holds for all $R>\rho$, i.e., does not depend on
  our particular (cf.\@ Appendix~\ref{sec:definitions-notations}) choice
  of $R>\rho$, this convergence result holds
  for $\Re s>\log_q \rho+1$.

  We use~\eqref{eq:regular-matrix-sequence} and
  Lemma~\ref{lemma:shifted-Dirichlet} (including its notation)
  to rewrite $\calF_{n_0}$ as
  \begin{align*}
    \calF_{n_0}(s) &= \sum_{n=n_0}^{qn_0-1}n^{-s}f(n) + \sum_{r=0}^{q-1 }\sum_{n\ge n_0} (qn+r)^{-s} f(qn+r)\\
    &= \sum_{n=n_0}^{qn_0-1} n^{-s}f(n) + q^{-s} \sum_{r=0}^{q-1} A_r
    \sum_{n\ge n_0} \Bigl(n+\frac{r}{q}\Bigr)^{-s} f(n)\\
    &= \sum_{n=n_0}^{qn_0-1} n^{-s}f(n) + q^{-s}
    \sum_{r=0}^{q-1} A_r \calF_{n_0}^{(r/q)}(s)\\
    &= \sum_{n=n_0}^{qn_0-1} n^{-s}f(n) + q^{-s}C\calF_{n_0}(s) + \calH_{n_0}(s)
  \end{align*}
  with
  \begin{equation*}
    \calH_{n_0}(s)\coloneqq q^{-s} \sum_{r=0}^{q-1}A_r \f[\big]{\Sigma}{s, \tfrac{r}{q}, \calF_{n_0}}
  \end{equation*}
  for $\Re s>\log_q R+ 1$.
  Thus
  \begin{equation}\label{eq:functional-equation-H}
    (I-q^{-s}C)\calF_{n_0}(s) = \sum_{n=n_0}^{qn_0-1}n^{-s}f(n)+\calH_{n_0}(s)
  \end{equation}
  for $\Re s>\log_q R+ 1$.
  By Lemma~\ref{lemma:shifted-Dirichlet} we have
  $\calH_{n_0}(s)=\Oh[\big]{\abs{\Im s}^{\mu_\delta(\Re s)}}$
  for $\log_q \rho + \delta\le \Re s\le
  \log_q \rho +\delta+1$.
  Rewriting the expression for $\calH_{n_0}(s)$ using the binomial series
  (see Lemma~\ref{lemma:shifted-Dirichlet} again) yields
  \begin{equation*}
    \calH_{n_0}(s)
    = q^{-s}\sum_{r=0}^{q-1} A_r \sum_{k\ge
      1}\binom{-s}{k}\Bigl(\frac{r}{q}\Bigr)^k \calF_{n_0}(s+k)
  \end{equation*}
  Combining this with~\eqref{eq:functional-equation-H}
  yields the expression~\eqref{eq:Dirichlet-recursion} for $\calG_{n_0}$.

  Solving \eqref{eq:analytic-continuation} for $\calF_{n_0}(s)$ yields the
  meromorphic continuation of $\calF_{n_0}(s)$ to $\Re s>\log_q R$ (and thus to $\Re
  s>\log_q \rho$) with possible poles where
  $q^s$ is an eigenvalue of $C$. As long as $q^s$ keeps a fixed positive
  distance $\delta$ from the eigenvalues, the bound for $\calG_{n_0}$
  (coming from the bound for $\calH_{n_0}$) carries over
  to a bound for $\calF_{n_0}$, i.e., \eqref{eq:order-F}.

  To estimate the order of the poles, let $w$ be generalised left eigenvector
  of rank $m$ of $C$ corresponding to an eigenvalue $\lambda$ with $\abs{\lambda}>R$. We claim that
  $w\calF_{n_0}(s)$ has a pole of order at most $m$ at $\log_q \lambda+\chi_k$ and no other
  poles for $\Re s>\log_q R$. We prove this by induction on $m$.

  Set $v\coloneqq w(C-\lambda I)$. By definition, $v=0$ or $v$ is a generalised
  eigenvector of rank $m-1$ of $C$. By induction hypothesis, $v\calF_{n_0}(s)$ has a
  pole of order at most $m-1$ at $\log_q \lambda+\chi_k$ for $k\in\Z$ and no other
  poles for $\Re s>\log_q R$.

  Multiplying \eqref{eq:analytic-continuation} by $w$,
  inserting the definition of~$v$ and reordering the summands yields
  \begin{equation*}
    (1 - q^{-s}\lambda)w\calF_{n_0}(s) = q^{-s}v \calF_{n_0}(s) + w\calG_{n_0}(s).
  \end{equation*}
  The right side has a pole of order at most $m-1$ at $\log_q \lambda+\chi_k$ for
  $k\in\Z$ and $1-q^{-s}\lambda$ has a simple zero at the same places. This
  proves the claim.
\end{proof}

\section{Fourier Coefficients: Proof of
  Theorem~\ref{theorem:use-Mellin--Perron}}
\label{section:proof:use-Mellin--Perron}
In contrast to the rest of this paper, this section does not directly relate to
a regular sequence but gives a general method to derive Fourier coefficients of
fluctuations.

\subsection{Pseudo-Tauberian Theorem}
In this section, we generalise a pseudo-Tau\-be\-rian argument by Flajolet, Grabner,
Kirschenhofer, Prodinger and
Tichy~\cite[Proposition~6.4]{Flajolet-Grabner-Kirschenhofer-Prodinger:1994:mellin}. In
contrast to their version, we allow for an additional logarithmic factor and
quantify the error in the case of a Hölder continuous function.
\begin{proposition}\label{proposition:pseudo-Tauber}
  Let $\kappa\in\C$ with $\Re \kappa > -1$, $q>1$ be a real number,  $k$ be a
  non-negative integer, $\Phi$ be a $1$-periodic Hölder continuous
  function with exponent $\alpha>0$. Then there exist continuously differentiable functions
  $\Psi_0$, \ldots, $\Psi_k$, periodic with period $1$, such that
  \begin{multline}
    \frac1{N^{\kappa+1}}\sum_{0\le n<N}n^\kappa (\log_q n)^k\Phi(\log_q n)
    =\sum_{j=0}^k \binom{k}{j}(\log_q N)^{k-j}\Psi_j(\log_q N)\\
    + \Oh[\big]{N^{-\alpha}(\log N)^k
      + N^{-(\Re\kappa+1)}(\log N)^{k+\iverson{\alpha=\Re\kappa+1}}}
    \label{eq:pseudo-Tauber-relation}
  \end{multline}
  for integers $N\to\infty$ and
  \begin{equation}
    \int_{0}^1 \Psi_j(u)\,\dd u = \frac{(-1)^{j}j!}{ (\log
      q)^{j}(1+\kappa)^{j+1}} \int_{0}^1 \Phi(u)\,\dd u
    \label{eq:pseudo-Tauber-Fourier}
  \end{equation}
  for $0\le j\le k$.
\end{proposition}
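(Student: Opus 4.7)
The plan is to adapt the pseudo-Tauberian construction of~\cite{Flajolet-Grabner-Kirschenhofer-Prodinger:1994:mellin} by defining $\Psi_j$ as an integral transform of~$\Phi$ tailored to the exponent~$\kappa$ and the logarithmic factor~$(\log_q N)^k$, and then comparing the discrete sum to the associated continuous integral.

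First, I would define, for $0 \le j \le k$,
\begin{equation*}
  \Psi_j(u) \coloneqq (\log q)(-1)^j \int_0^\infty w^j q^{-w(\kappa+1)} \Phi(u-w) \, \dd w.
\end{equation*}
Because $\Re\kappa > -1$ and $\Phi$ is bounded, the integrand is integrable; hence $\Psi_j$ is well-defined, bounded, and $1$-periodic (the last by periodicity of~$\Phi$). To establish continuous differentiability, substitute $v = u - w$ to rewrite
\begin{equation*}
  \Psi_j(u) = (\log q)(-1)^j q^{-u(\kappa+1)} \int_{-\infty}^u (u-v)^j q^{v(\kappa+1)} \Phi(v) \, \dd v
\end{equation*}
and differentiate both factors. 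For $j \ge 1$ this yields the first-order relation $\Psi_j'(u) = -(\kappa+1)(\log q)\Psi_j(u) - j\Psi_{j-1}(u)$, while for $j = 0$ the boundary term of the integral contributes an additional $(\log q)\Phi(u)$. In either case, the right-hand side is continuous, so inductively $\Psi_j \in C^1$.

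The key identity connecting $\Psi_j$ to the statement comes from substituting $t = q^v$ in $\int_0^N t^\kappa (\log_q t)^k \Phi(\log_q t)\,\dd t$, then $w = \log_q N - v$, and expanding $(\log_q N - w)^k$ via the binomial theorem, giving
\begin{equation*}
  \frac{1}{N^{\kappa+1}} \int_0^N t^\kappa (\log_q t)^k \Phi(\log_q t) \, \dd t = \sum_{j=0}^k \binom{k}{j} (\log_q N)^{k-j} \Psi_j(\log_q N).
\end{equation*}
Proving~\eqref{eq:pseudo-Tauber-relation} therefore reduces to controlling the difference between sum and integral. I would split $\int_0^N = \sum_{0 \le n < N}\int_n^{n+1}$, dispose of the $n = 0$ contribution as $\Oh{N^{-(\Re\kappa+1)}}$ (since $\int_0^1 t^\kappa (\log_q t)^k \Phi(\log_q t)\,\dd t$ is finite when $\Re\kappa > -1$), and for $n \ge 1$ bound the pointwise discrepancy $|f(t)\Phi(\log_q t) - f(n)\Phi(\log_q n)|$ with $f(t) = t^\kappa (\log_q t)^k$ by combining $|f(t) - f(n)| = \Oh{n^{\Re\kappa-1}(\log n)^k}$ from the mean value theorem with $|\Phi(\log_q t) - \Phi(\log_q n)| = \Oh{n^{-\alpha}}$ from Hölder continuity together with $\log_q(1 + 1/n) = \Oh{1/n}$. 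This yields a per-summand error of $\Oh{n^{\Re\kappa-\alpha}(\log n)^k}$; summing, dividing by $N^{\kappa+1}$, and performing case analysis on the sign of $\Re\kappa + 1 - \alpha$ then produces the error in~\eqref{eq:pseudo-Tauber-relation}.

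For~\eqref{eq:pseudo-Tauber-Fourier}, I would apply Fubini to the defining integral of $\Psi_j$, use $1$-periodicity of $\Phi$ to reduce the inner integral to $\int_0^1 \Phi(u)\,\dd u$, and evaluate the remaining Gamma-type integral $\int_0^\infty w^j q^{-w(\kappa+1)} \, \dd w = j!/((\kappa+1)\log q)^{j+1}$. The main obstacle will be the error estimate, specifically the case analysis in the sum-to-integral comparison: in the borderline case $\alpha = \Re\kappa + 1$, the sum $\sum_{1 \le n<N} n^{\Re\kappa-\alpha}(\log n)^k$ acquires an additional $\log N$ factor, which is precisely the source of the Iverson-bracket exponent on $N^{-(\Re\kappa+1)}$ in the stated error bound.
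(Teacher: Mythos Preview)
Your approach is correct and takes a genuinely different route from the paper's. The paper proceeds by a $q$-adic decomposition of the summation range: it writes $\sum_{0\le n<N}$ as $\sum_{0\le p<\Lambda}\sum_{q^p\le n<q^{p+1}}$ plus a tail, rescales each block to the unit interval, and approximates the resulting Riemann sums by integrals $I_a(u)=\int_1^{q^u}x^\kappa(\log_q x)^a\Phi(\log_q x)\,\dd x$ with explicit error $\varepsilon_a(p,u)=\Oh{q^{-\alpha p}}$. The functions $\Psi_j$ emerge as combinations of the $I_a$ and geometric-type sums $\sum_{p\ge1}Q^{-p}(-p)^c$; periodicity must then be verified by computing $\Psi_j(0)=\Psi_j(1)$ and matching derivatives at the endpoints, and the mean $\int_0^1\Psi_j$ is obtained through a page-long integration involving telescoping sums and repeated binomial simplifications.

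Your construction short-circuits most of this. By writing $\Psi_j$ directly as the convolution $(\log q)(-1)^j\int_0^\infty w^j q^{-w(\kappa+1)}\Phi(u-w)\,\dd w$, periodicity is immediate, continuous differentiability follows from the first-order recursion you derive (which coincides with the ODE the paper obtains), and the mean drops out of Fubini and a Gamma integral in two lines. The exact identity $\frac{1}{N^{\kappa+1}}\int_0^N t^\kappa(\log_q t)^k\Phi(\log_q t)\,\dd t=\sum_j\binom{k}{j}(\log_q N)^{k-j}\Psi_j(\log_q N)$ is the key insight: it pushes all the error into a standard Euler--Maclaurin style sum-to-integral comparison, where the H\"older exponent enters transparently via $\abs{\Phi(\log_q t)-\Phi(\log_q n)}=\Oh{n^{-\alpha}}$. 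The paper's approach, by contrast, keeps track of the error block by block and reassembles; it is more self-contained but considerably longer. Both yield the same $\Psi_j$ (they satisfy the same first-order system and are forced by periodicity). Your argument is cleaner, at the cost of relying on the convolution representation, which one has to spot. One minor point: for very small $n$ (say $n=1$ when $k\ge1$) the bound $\abs{f(t)-f(n)}=\Oh{n^{\Re\kappa-1}(\log n)^k}$ degenerates to zero, so those finitely many terms should be absorbed separately into the $\Oh{N^{-(\Re\kappa+1)}}$ contribution; this is routine.
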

\begin{proof}\proofparagraph{Initial Simplification}
  Writing
  \begin{equation*}
    (\log_q n)^k = \sum_{j=0}^k \binom{k}{j}(\log_q N)^{k-j}(\log_q n - \log_qN)^j
  \end{equation*}
  shows that it is sufficient to consider sums
  \begin{equation*}
    S_j(N)\coloneqq  \frac1{N^{\kappa+1}}\sum_{0\le n<N}n^\kappa (\log_q n-\log_q N)^j\Phi(\log_q n).
  \end{equation*}
  We use the abbreviations $\Lambda\coloneqq \floor{\log_q N}$, $\nu\coloneqq
  \fractional{\log_q N}$ and $Q\coloneqq q^{\kappa+1}$, i.e.,
  $N=q^{\Lambda+\nu}$. Note that the assumptions imply that $\abs{Q}>1$.

  \proofparagraph{Construction of $\Psi_j$}
  Splitting the range of summation with respect to powers of $q$ yields
  \begin{align*}
    S_j(N) = \phantom{+\;}&\frac{1}{Q^{\Lambda+\nu}}
    \sum_{0\le p<\Lambda}\sum_{q^p\le n<q^{p+1}}n^\kappa
    (\log_q n-\log_q N)^j\Phi(\log_q n) \\
    +\; &\frac{1}{Q^{\Lambda+\nu}}
    \sum_{q^\Lambda\le n< q^{\Lambda+\nu}}n^{\kappa}(\log_q n-\log_q N)^j\Phi(\log_q n).
  \end{align*}
  We write $n=q^px$ (or $n=q^\Lambda x$ for the second sum), use the periodicity of $\Phi$ and get
  \begin{align*}
    S_j(N) = \phantom{+\;}&\frac{1}{Q^{\Lambda+\nu}}
    \sum_{0\le p<\Lambda}\sum_{\substack{x\in q^{-p}\Z\\ 1\le x < q}}
    \frac{Q^px^{\kappa}}{q^p} (p + \log_q x- \Lambda - \nu)^j\Phi(\log_q x) \\
    +\; &\frac{1}{Q^{\Lambda+\nu}}
    \sum_{\substack{x\in q^{-\Lambda}\Z\\ 1\le x < q^{\nu}}}
    \frac{Q^\Lambda x^{\kappa}}{q^\Lambda} (\log_q x-\nu)^j\Phi(\log_q x).
  \end{align*}
  We expand $p+\log_q x -\Lambda - \nu$ by the multinomial theorem and obtain
  \begin{align*}
    S_j(N) = \phantom{+\;}&\frac{1}{Q^{\nu}}
    \sum_{a+b+c=j}\trinom{j}{a}{b}{c}(-\nu)^b\sum_{0\le p<\Lambda}
    Q^{-(\Lambda-p)}(p-\Lambda)^c \\
    &\hspace*{14.25em}\times
    \sum_{\substack{x\in q^{-p}\Z\\ 1\le x < q}}
    x^\kappa(\log_q x)^a \Phi(\log_q x)\frac1{q^p} \\
    +\; &\frac{1}{Q^{\nu}}
    \sum_{a+b=j}\bibinom{j}{a}{b}(-\nu)^b
    \sum_{\substack{x\in q^{-\Lambda}\Z\\ 1\le x < q^{\nu}}}
    x^\kappa(\log_q x)^a \Phi(\log_q x)\frac1{q^\Lambda}.
  \end{align*}
  We write a multinomial coefficient $\bibinom{j}{a}{b}$ instead of the
  equivalent binomial coefficient $\binom{j}{a}$ in those locations where we
  want to emphasise the symmetry and the analogy to the multinomial coefficient $\trinom{j}{a}{b}{c}$.
  The inner sums are Riemann sums converging
  to the corresponding integrals for $p\to\infty$.
  We set
  \begin{equation*}
    I_a(u)\coloneqq\int_{1}^{q^u}x^{\kappa}(\log_q x)^a \Phi(\log_q x)\,\dd x.
  \end{equation*}
  It will be convenient to change variables $x=q^z$ in $I_a(u)$ to get
  \begin{equation}\label{eq:I_a-changed-variables}
    I_a(u)=(\log q)\int_{0}^{u}Q^z z^a \Phi(z)\,\dd z.
  \end{equation}
  We define the error~$\varepsilon_a(p, u)$ by
  \begin{equation*}
    \sum_{\substack{x\in q^{-p}\Z\\
        1\le x < q^u}}x^\kappa(\log_q x)^a \Phi(\log_q x)\frac1{q^p}=I_a(u) +
    \varepsilon_a(p, u).
  \end{equation*}
  By bounding~$\varepsilon_a(p, u)$ by the difference of upper and lower
  Darboux sums (step size~$q^{-p}$)
  corresponding to the integral~$I_a(u)$,
  and by Hölder continuity, we have $\varepsilon_a(p, u)=\Oh{q^{-\alpha p}}$
  uniformly in $0\le u\le 1$.
  This
  results in
  \begin{align*}
    Q^\nu S_j(N)&=\sum_{a+b+c=j}\trinom{j}{a}{b}{c}(-\nu)^bI_a(1)\sum_{0\le
      p<\Lambda}Q^{-(\Lambda-p)}(p-\Lambda)^c \\
    &\phantom{=}\;+ \sum_{a+b+c=j}\trinom{j}{a}{b}{c}(-\nu)^b\sum_{0\le
      p<\Lambda}Q^{-(\Lambda-p)}(p-\Lambda)^c \varepsilon_a(p, 1)\\
    &\phantom{=}\;+ \sum_{a+b=j}\bibinom{j}{a}{b}(-\nu)^b(I_a(\nu) + \varepsilon_a(\Lambda, \nu)).
  \end{align*}

  As $\Phi$ is Hölder continuous with exponent $\alpha$, we have
  \begin{equation*}
    \sum_{0\le
      p<\Lambda}Q^{-(\Lambda-p)}(p-\Lambda)^c \varepsilon_a(p, 1) = 
    \Oh[\bigg]{\sum_{0\le p< \Lambda} \abs{Q}^{-(\Lambda-p)}(\Lambda-p)^c q^{-\alpha p}}.
  \end{equation*}
  Replacing $p$ by $\Lambda-p$ on the right hand side of this equation yields
  \begin{equation*}
    \sum_{0\le p<\Lambda}Q^{-(\Lambda-p)}(p-\Lambda)^c \varepsilon_a(p, 1) =
    \Oh[\bigg]{q^{-\alpha \Lambda}\sum_{0< p\le \Lambda}
      p^c \Bigl(\frac{q^{\alpha}}{\abs{Q}}\Bigr)^p}.
  \end{equation*}
  We estimate the O-term in dependence of $q^\alpha/\abs{Q}$. If
  $q^\alpha<\abs{Q}$, then the sum converges and contributes
  $\Oh{q^{-\alpha \Lambda}}$. If $q^\alpha=\abs{Q}$, we estimate it by
  $\Oh{q^{-\alpha \Lambda} \Lambda^{c+1}}$ and if $q^\alpha>\abs{Q}$, then we use
  $\Oh{\Lambda^c \abs{Q}^{-\Lambda}}$. In total we obtain
  \begin{align*}
    \sum_{0\le p<\Lambda}Q^{-(\Lambda-p)}(p-\Lambda)^c \varepsilon_a(p, 1)
    &= \Oh[\big]{q^{-\alpha \Lambda}
      + \Lambda^c \abs{Q}^{-\Lambda}
      + q^{-\alpha \Lambda} \Lambda^{c+1}\iverson{q^\alpha=\abs{Q}}} \\
    &= \Oh[\big]{R_j(N)}
  \end{align*}
  with $R_j(N)\coloneqq N^{-\alpha} + N^{-(\Re\kappa+1)}(\log N)^{j+\iverson{\alpha=\Re\kappa+1}}$.

  Thus
  \begin{multline*}
    S_j(N)=\frac1{Q^\nu}\sum_{a+b+c=j}\trinom{j}{a}{b}{c}(-\nu)^bI_a(1)\sum_{0\le
      p<\Lambda}Q^{-(\Lambda-p)}(p-\Lambda)^c \\
    + \frac1{Q^\nu}\sum_{a+b=j}\bibinom{j}{a}{b}(-\nu)^bI_a(\nu)
    + \Oh[\big]{R_j(N)}
  \end{multline*}
  for $N\to\infty$.
  Replacing $p$ by $\Lambda-p$ in the first sum and then extending
  the sum to an infinite sum yields
  \begin{equation*}
    \sum_{0\le
      p<\Lambda}Q^{-(\Lambda-p)}(p-\Lambda)^c = \sum_{0<p\le
      \Lambda}Q^{-p}(-p)^c=\sum_{0<p}Q^{-p}(-p)^c + \Oh{Q^{-\Lambda}\Lambda^c}
  \end{equation*}
  for $N\to\infty$.
  Inserting this yields $S_j(N)=\Psi_j(\fractional{\log_q N}) + \Oh{R_j(N)}$ for $N\to \infty$ where
  \begin{multline*}
    \Psi_j(u)\coloneqq\frac1{Q^u}\sum_{a+b+c=j}\trinom{j}{a}{b}{c}(-u)^bI_a(1)\sum_{p\ge
      1}Q^{-p}(-p)^c \\
    + \frac1{Q^u}\sum_{a+b=j}\bibinom{j}{a}{b}(-u)^bI_a(u)
  \end{multline*}
  for $0\le u\le 1$. It is clear that $\Psi_j$ is a continuously differentiable
  function.

  \proofparagraph{Periodicity of $\Psi_j$}
  By splitting the trinomial coefficient and using the binomial theorem,
  we have
  \begin{align*}
    \sum_{a+b+c=j} \trinom{j}{a}{b}{c} I_a(1) (-u)^{b} (-p)^c
    &= \sum_{a=0}^{j} \binom{j}{a} I_a(1)
    \sum_{b=0}^{j-a} \binom{j-a}{b} (-u)^{b} (-p)^{(j-a)-b} \\
    &= \sum_{a=0}^{j} \binom{j}{a} I_a(1) (-u-p)^{j-a} \\
    &= \sum_{a+b=j} \bibinom{j}{a}{b} I_a(1) (-u-p)^b.
  \end{align*}
  Thus we can simplify the expression for $\Psi_j$ to
  \begin{equation}\label{eq:Psi_j-simplified}
    \Psi_j(u)=\frac{1}{Q^u}\sum_{a+b=j}\bibinom{j}{a}{b}\biggl(I_a(1)\sum_{p\ge
      1}Q^{-p}(-u-p)^{b} + I_a(u)(-u)^{b}\biggr).
  \end{equation}
  From this, it is easily seen that $\Psi_j(1)=\Psi_j(0)$. We calculate the
  derivative using the identity $b\bibinom{j}{a}{b}=j\bibinom{j-1}{a}{b-1}$ for
  $a+b=j$ and get
  \begin{multline*}
    \frac{\dd\Psi_j(u)}{\dd u} = - (\log Q) \Psi_j(u) \\ - \frac{j}{Q^u}\sum_{a+b=j}\bibinom{j-1}{a}{b-1}\biggl(I_a(1)\sum_{p\ge
      1}Q^{-p}(-u-p)^{b-1} + I_a(u)(-u)^{b-1}\biggr) \\+ \frac1{Q^u}\sum_{a+b=j}\bibinom{j}{a}{b} \frac{\dd I_a(u)}{\dd u}(-u)^b.
  \end{multline*}
  The second summand is $-j\Psi_{j-1}(u)$ (with the convention that
  $\Psi_{j-1}(u)=0$). By \eqref{eq:I_a-changed-variables}, we have
  $\frac{\dd I_a(u)}{\dd u} = (\log q) Q^u u^a \Phi(u)$. By inserting this and using the binomial
  theorem once more, we get
  \begin{align*}
    \frac{\dd\Psi_j(u)}{\dd u} &= -(\log Q)\Psi_j(u) - j\Psi_{j-1}(u) + (\log
                 q)\Phi(u)\sum_{a+b=j}\bibinom{j}{a}{b}u^a(-u)^b\\
    &= -(\log Q)\Psi_j(u) - j\Psi_{j-1}(u) + (\log
                 q)\Phi(u) \iverson{j=0}.
  \end{align*}
  From $\Psi_j(0)=\Psi_j(1)$, $\Psi_{j-1}(0)=\Psi_{j-1}(1)$ and
  $\Phi(0)=\Phi(1)$, we see that
  $\frac{\dd\Psi_j(u)}{\dd u}\bigr\vert_{u=0}=\frac{\dd\Psi_j(u)}{\dd u}\bigr\vert_{u=1}$.
  Thus $\Psi_j$ can be extended to a continuously differentiable $1$-periodic
  function on~$\R$.

  \proofparagraph{Mean}
  We now determine $\int_{0}^1 \Psi_j(u)\,\dd u$. For this aim, we note that
  \begin{equation*}
    \int (u+p)^bQ^{-u}\,\dd u = - \sum_{\ell=0}^b \frac{b^{\underline{\ell}}
      (u+p)^{b-\ell}Q^{-u}}{(\log Q)^{\ell+1}}
  \end{equation*}
  for integers $b\ge 0$ and
  falling factorials~$b^{\underline{\ell}}\coloneqq b(b-1)\dotsm (b-\ell+1)$,
  and we use \eqref{eq:I_a-changed-variables} and \eqref{eq:Psi_j-simplified}.
  Thus
  \begin{align*}
    \int_0^1 \Psi_j(u)\,\dd u
    &= \sum_{a+b=j}\bibinom{j}{a}{b} \biggl(I_a(1)
    (-1)^{b-1} \sum_{\ell=0}^{b}\frac{b^{\underline{\ell}}}{(\log Q)^{\ell+1}} \\
    &\hspace*{11.725em}\times
    \sum_{p\ge 1}\bigl((p+1)^{b-\ell}Q^{-(p+1)} -
    p^{b-\ell}Q^{-p}\bigr)\\
    &\hspace*{8.7em}
    + (\log q)\int_{0\le z\le u\le 1}Q^{z-u}z^a\Phi(z)(-u)^{b}\,\dd z\,\dd u\biggr)\!.
  \end{align*}
  The innermost sum (over $p\geq1$)
  is a telescoping sum and reduces to $-Q^{-1}$. Therefore,
  we obtain
  \begin{align*}
    \int_0^1 \Psi_j(u)\,\dd u
    &=\sum_{a+b=j}\bibinom{j}{a}{b} (-1)^{b}\biggl(I_a(1)
    \sum_{\ell=0}^{b}\frac{b^{\underline{\ell}}Q^{-1}}{(\log Q)^{\ell+1}} \\
    &\hspace*{9.125em}
      + (\log q)\int_{z=0}^1 Q^zz^a\Phi(z)\int_{u=z}^1  u^{b}Q^{-u}\,\dd u \,\dd z\biggr)\\
    &=\sum_{a+b=j} \bibinom{j}{a}{b}(-1)^{b}\sum_{\ell=0}^{b} 
    \frac{b^{\underline{\ell}}}{(\log Q)^{\ell+1}} \\
    &\hspace*{4em}\times
    \biggl(\frac{I_a(1)}{Q} - (\log q)\int_{0}^1 Q^z z^a \Phi(z)\bigl(Q^{-1}-z^{b-\ell}Q^{-z}\bigr)\,\dd z
      \biggr)\\
    &=(\log q)\sum_{b=0}^j\binom{j}{b}(-1)^b \sum_{\ell=0}^b
      \frac{b^{\underline{\ell}}}{(\log Q)^{\ell+1}}\int_{0}^1
      z^{j-\ell}\Phi(z)\,\dd z\\
    &=(\log q) \sum_{\ell=0}^{j}\frac{j!\,(-1)^{\ell}}{(j-\ell)!\, (\log
      Q)^{\ell+1}} \int_{0}^1 z^{j-\ell}\Phi(z)\,\dd z \\
    &\hspace*{14.75em}\times
      \sum_{b=\ell}^{j}\frac{(j-\ell)!}{(b-\ell)!\,(j-b)!}(-1)^{b-\ell}.
  \end{align*}
  Replacing $b-\ell$ by $b$ and using the binomial theorem once more
  the inner sum yields
  \begin{equation*}
    \sum_{b=\ell}^{j}\frac{(j-\ell)!}{(b-\ell)!\,(j-b)!}(-1)^{b-\ell}
    = \sum_{b=0}^{j-\ell}\frac{(j-\ell)!}{b!\,(j-b-\ell)!}(-1)^{b}
    = (1-1)^{j-\ell} = \iverson{j = \ell}.
  \end{equation*}
  This implies
  \begin{align*}
    \int_0^1 \Psi_j(u)\,\dd u
    &= (\log q) \sum_{\ell=0}^{j}\iverson{j = \ell}
    \frac{j!\,(-1)^{\ell}}{(j-\ell)!\, (\log Q)^{\ell+1}}
    \int_{0}^1 z^{j-\ell}\Phi(z)\,\dd z \\
    &= (\log q) \frac{j!(-1)^{j}}{ (\log
      Q)^{j+1}} \int_{0}^1 \Phi(z)\,\dd z.
  \end{align*}
  Replacing $Q$ by its definition yields the result.
\end{proof}

A straightforward application of Proposition~\ref{proposition:pseudo-Tauber} to
linear combinations with different values of $k$ provides the mean of the
functions $\Psi_j$ in terms of the original functions. For our application, we
need to rewrite this to express the mean of the original functions by the means
of the $\Psi_j$. Additionally, we also extract arbitrary Fourier coefficients
(instead of the zeroth Fourier coefficient) and we prove a uniqueness result.

We start with an auxiliary lemma which will provide uniqueness.

\begin{lemma}\label{lemma:uniqueness-fluctuations}
  Let $m$ be a positive integer, $q>1$ be a real number, $\kappa\in\C$ such
  that $\kappa\notin \frac{2\pi i}{\log q}\Z$, $c\in\C$ and $\Psi_0$, \ldots, $\Psi_{m-1}$ and $\Xi_0$,
  \ldots, $\Xi_{m-1}$ be $1$-periodic continuous functions such that
  \begin{equation}\label{eq:Fourier:function-comparison}
    \sum_{0\le k<m}(\log_qN)^k\Psi_k(\log_q N) = \sum_{0\le k<m}(\log_q N)^k
    \Xi_k(\log_q N) + c N^{-\kappa} + \oh{1}
  \end{equation}
  for integers $N\to\infty$. Then $\Psi_k=\Xi_k$ for $0\le k<m$.
\end{lemma}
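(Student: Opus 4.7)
\emph{Proof plan.} Setting $\Delta_k \coloneqq \Psi_k - \Xi_k$, the hypothesis rewrites as
\begin{equation*}
  \sum_{0\le k<m}(\log_q N)^k \Delta_k(\log_q N) = c N^{-\kappa}+\oh{1}
\end{equation*}
for integers $N\to\infty$, and we must show $\Delta_k\equiv 0$ for all $k$. I would probe along the integer sequence $N_L\coloneqq \ceil{q^{L+u}}$ for fixed $u\in[0,1)$ and $L\to\infty$ through positive integers, noting that $\log_q N_L = L+u+\epsilon_L$ with $\epsilon_L\to 0$. By $1$-periodicity ($L\in\Z$) and continuity, $\Delta_k(\log_q N_L)=\Delta_k(u+\epsilon_L)\to\Delta_k(u)$, and expanding $(L+u+\epsilon_L)^k$ as a polynomial in $L$ rewrites the identity as
\begin{equation*}
  \sum_{j=0}^{m-1}L^j P_j(u)+\oh{L^{m-1}} = cq^{-\kappa u}q^{-\kappa L}\bigl(1+\oh{1}\bigr)+\oh{1},
\end{equation*}
where $P_j(v)\coloneqq \sum_{k=j}^{m-1}\binom{k}{j}v^{k-j}\Delta_k(v)$; in particular $P_{m-1}=\Delta_{m-1}$.

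For the induction step ($m\ge 2$), divide this probe identity by $L^{m-1}$ and let $L\to\infty$. The lower-order $P_j$ terms contribute $\Oh{1/L}$, so the left side tends to $\Delta_{m-1}(u)$. If $\Re\kappa\ge 0$, the right side is bounded in absolute value by a constant times $L^{-(m-1)}+\oh{1}$, hence tends to $0$. If $\Re\kappa<0$ it would blow up exponentially, but the left side is bounded; thus $c=0$, and again the right side is $\oh{1}$. Either way $\Delta_{m-1}(u)=0$ for all $u\in[0,1)$, so $\Delta_{m-1}\equiv 0$. Removing this summand reduces the identity to the case $m-1$, and the induction hypothesis finishes the job.

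The actual content lies in the base case $m=1$, where probing gives $\Delta_0(u)=cq^{-\kappa u}q^{-\kappa L}+\oh{1}$ with left side independent of $L$. The cases $\Re\kappa>0$ (right side tends to $0$) and $\Re\kappa<0$ (boundedness of $\Delta_0$ forces $c=0$) are straightforward. The critical case is $\Re\kappa=0$: then $q^{-\kappa L}=\fexp{-i(\Im\kappa)(\log q)L}$ lies on the unit circle, and convergence of this sequence as $L\to\infty$ through $\Z$ requires $\fexp{-i(\Im\kappa)(\log q)}=1$, i.e.\ $(\Im\kappa)(\log q)\in 2\pi\Z$. The hypothesis $\kappa\notin\frac{2\pi i}{\log q}\Z$ rules this out, so $c=0$ after all, whence $\Delta_0(u)=\oh{1}=0$, and the induction is complete.
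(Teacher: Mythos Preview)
Your proof is correct and follows essentially the same route as the paper: probe along integers $N\approx q^{L+u}$ for fixed $u$, use $1$-periodicity and continuity to pass to the limit, and peel off the highest power of $\log_q N$. The paper argues indirectly from the maximal $k$ with $\Psi_k\neq\Xi_k$ while you set up an explicit induction on $m$; these are equivalent. The one genuine difference is your treatment of the critical case $\Re\kappa=0$: the paper constructs explicit subsequences via rational (or continued-fraction) approximation of $\beta=-\frac{\log q}{2\pi i}\kappa$ to exhibit two distinct subsequential limits of $N_j^{-\kappa}$, whereas you use the one-line observation that if $e^{i\theta L}\to z$ along integers $L$ then $e^{i\theta}z=z$ forces $e^{i\theta}=1$. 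Your argument is shorter and avoids the case split between rational and irrational $\beta$.
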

\begin{proof}If $\Re \kappa <0$ and $c\neq 0$, then
  \eqref{eq:Fourier:function-comparison} is impossible as the growth of the
  right-hand side of the equation is larger than that on the left-hand side.
  So we can exclude this
  case from further consideration.
  We proceed indirectly and choose $k$ maximally such that $\Xi_k\neq\Psi_k$.
  Dividing \eqref{eq:Fourier:function-comparison} by $(\log_q N)^k$ yields
  \begin{equation}\label{eq:comparison}
    (\Xi_k-\Psi_k)(\log_q N) = cN^{-\kappa}\iverson{k=0} + \oh{1}
  \end{equation}
  for $N\to\infty$. Let
  $0< u<1$ and set $N_j=\floor{q^{j+u}}$. We
  clearly have $\lim_{j\to\infty} N_j=\infty$. Then
  \begin{equation*}
    j+u + \log_q(1-q^{-j-u}) = \log_q(q^{j+u}-1)\le \log_q N_j \le j+u.
  \end{equation*}
  We define $\nu_j\coloneqq \log_q N_j-j-u$ and see that $\nu_j=\Oh{q^{-j}}$ for
  $j\to\infty$, i.e., $\lim_{j\to\infty} \nu_j = 0$.
  This implies that $\lim_{j\to\infty}\fractional{\log_q N_j}=u$ and therefore
  \begin{equation*}
    \lim_{j\to\infty}(\Xi_k-\Psi_k)(\log_q N_j)=\lim_{j\to\infty}(\Xi_k-\Psi_k)(\fractional{\log_q N_j})=\Xi_k(u)-\Psi_k(u).
  \end{equation*}
  Setting $N=N_j$ in \eqref{eq:comparison} and letting $j\to \infty$ shows that
  \begin{equation}\label{eq:comparison-limit}
    \Xi_k(u)-\Psi_k(u) = \lim_{j\to\infty}cN_j^{-\kappa}\iverson{k=0}.
  \end{equation}
  If $k\neq 0$ or $\Re \kappa>0$, we immediately conclude that
  $\Xi_k(u)-\Psi_k(u)=0$. If $\Re \kappa<0$ we have
  $c=0$, which again implies that $\Xi_k(u)-\Psi_k(u)=0$.

  Now we assume that $\Re \kappa=0$ and $k=0$. We set
  $\beta\coloneqq -\frac{\log q}{2\pi i}\kappa$, which implies that
  $N^{-\kappa}=\exp(2\pi i \beta\log_q N)$. We choose sequences
  $(r_\ell)_{\ell\ge 1}$ and $(s_\ell)_{\ell\ge 1}$ such that
  $\lim_{\ell\to\infty }s_\ell=\infty$ and $\lim_{\ell\to\infty}\abs{s_\ell
    \beta - r_\ell}=0$: for rational $\beta=r/s$, we simply take $r_\ell=\ell
  r$ and $s_\ell=\ell s$; for irrational $\beta$, we consider the sequence of
  convergents $(r_\ell/s_\ell)_{\ell\ge 1}$ of the continued fraction of
  $\beta$ and the required properties follow from the theory of continued
  fractions, see for example \cite[Theorems~155
  and~164]{Hardy-Wright:1975}. By using $\log_q N_j = j+u+\nu_j$, we get
  \begin{align*}
    \lim_{\ell\to\infty}N_{s_\ell}^{-\kappa} &= \lim_{\ell\to\infty}\exp(2\pi i
    (r_\ell + \beta u + (s_\ell \beta - r_\ell) + \beta \nu_{s_\ell})=\exp(2\pi i \beta u),\\
    \lim_{\ell\to\infty}N_{s_\ell+1}^{-\kappa} &= \lim_{\ell\to\infty}\exp(2\pi i
    (r_\ell + \beta + \beta u + (s_\ell \beta - r_\ell)+\beta \nu_{s_\ell+1})=\exp(2\pi i \beta (1+u)).
  \end{align*}
  These two limits are distinct as $\beta\notin\Z$ by assumption.
  Thus $\lim_{j\to\infty}N_j^{-\kappa}$ does not exist. Therefore,
  \eqref{eq:comparison-limit} implies that $c=0$ and therefore $\Xi_k(u)-\Psi_k(u)=0$.

  We proved that $\Xi_k(u)=\Psi_k(u)$ for $u\notin\Z$. By continuity, this
  also follows for all $u \in \R$; contradiction.
\end{proof}

\begin{proposition}\label{proposition:pseudo-Tauber:combination}
  Let $\kappa\in\C$ with $\Re \kappa>-1$, $q>1$ be a real number, $m$ be a
  positive integer and $\Phi_0$, \ldots, $\Phi_{m-1}$ be $1$-periodic
  Hölder continuous functions with exponent $\alpha>0$. Then there exist uniquely defined $1$-periodic
  continuously differentiable functions $\Psi_0$, \ldots, $\Psi_{m-1}$ such that
  \begin{multline}\label{eq:pseudo-Tauber-relation:combination}
    \frac1{N^{\kappa+1}}\sum_{0\le n<N}n^\kappa \sum_{0\le k<m}(\log_q n)^k\Phi_k(\log_q n)
    =\sum_{0\le k<m} (\log_q N)^k\Psi_k(\log_q N) \\
    + \Oh[\big]{N^{-\alpha}(\log N)^{m-1}
      + N^{-(\Re\kappa+1)}(\log N)^{m-1+\iverson{\alpha=\Re\kappa+1}}}
  \end{multline}
  for integers $N\to\infty$.

  Denoting the Fourier coefficients of $\Phi_k$ and $\Psi_k$ by
  \begin{equation*}
    \varphi_{k\ell}\coloneqq \int_{0}^1 \Phi_k(u)\exp(-2\ell \pi i
    u)\,\dd u,\qquad
    \psi_{k\ell}\coloneqq \int_{0}^1 \Psi_k(u)\exp(-2\ell \pi i
    u)\,\dd u,
  \end{equation*}
  respectively, for $0\le k<m$ and $\ell\in\Z$ and setting $\psi_{m\ell}=0$ for $\ell\in\Z$, we have
  \begin{equation}\label{eq:Fourier-recurrence}
    \varphi_{k\ell}= (\kappa+1+\chi_\ell)\psi_{k\ell}
    + \frac{(k+1)}{\log q} \psi_{(k+1)\ell}
  \end{equation}
  for $0\le k<m$ and $\ell\in\Z$.
\end{proposition}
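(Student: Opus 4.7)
The plan is to establish the three assertions---existence, uniqueness, and the Fourier recurrence~\eqref{eq:Fourier-recurrence}---by combining Proposition~\ref{proposition:pseudo-Tauber}, Lemma~\ref{lemma:uniqueness-fluctuations}, and an explicit basis-case computation. For existence I would apply Proposition~\ref{proposition:pseudo-Tauber} separately to each of the $m$ summands $n^\kappa(\log_q n)^k\Phi_k(\log_q n)$ appearing on the left-hand side of~\eqref{eq:pseudo-Tauber-relation:combination}. This yields, for every $0\le k<m$, continuously differentiable $1$-periodic functions $\Psi_{k,0},\ldots,\Psi_{k,k}$. Summing the resulting identities and collecting by the power of $\log_q N$ via the reindexing $p=k-j$ suggests the definition
\begin{equation*}
  \Psi_p(u) \coloneqq \sum_{k=p}^{m-1}\binom{k}{p}\Psi_{k,\,k-p}(u)\qquad\text{for } 0\le p<m,
\end{equation*}
which inherits $1$-periodicity and continuous differentiability from its summands, while the worst of the individual error bounds (attained at $k=m-1$) matches the error stated in~\eqref{eq:pseudo-Tauber-relation:combination}.

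Uniqueness follows by a direct application of Lemma~\ref{lemma:uniqueness-fluctuations}: if two families $(\Psi_k)$ and $(\tilde\Psi_k)$ both satisfy~\eqref{eq:pseudo-Tauber-relation:combination}, then subtracting yields
\begin{equation*}
  \sum_{0\le k<m}(\log_q N)^k(\Psi_k-\tilde\Psi_k)(\log_q N) = \oh{1}\qquad\text{as } N\to\infty,
\end{equation*}
since $\alpha>0$ and $\Re\kappa+1>0$ force the error term in~\eqref{eq:pseudo-Tauber-relation:combination} to tend to zero. Applying Lemma~\ref{lemma:uniqueness-fluctuations} with $c=0$ and any auxiliary parameter not in $\frac{2\pi i}{\log q}\Z$ then forces $\Psi_k=\tilde\Psi_k$ for every~$k$; an immediate consequence is that the map $\Phi\mapsto\Psi$ is linear.

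For the Fourier recurrence I would first verify it in the basis case $\Phi_{k_0}(u)=\exp(2\pi i\ell u)$ with $\Phi_k=0$ for $k\neq k_0$: the summation identity becomes $\sum_{n<N} n^{\kappa+\chi_\ell}(\log_q n)^{k_0}$, and repeated integration by parts on $\int y^{\kappa+\chi_\ell}(\log_q y)^{k_0}\,\dd y$ together with standard Euler--Maclaurin estimates exhibits the associated $\Psi_j(u)$ as $c_j\exp(2\pi i\ell u)$ for explicit constants $c_j$ read off the antiderivative. Substituting these values into~\eqref{eq:Fourier-recurrence} reduces it to a routine algebraic identity, and linearity then extends the recurrence to every trigonometric polynomial input.

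The main obstacle is the passage from trigonometric polynomials to general Hölder continuous $\Phi_k$, whose Fourier series need not converge pointwise. For this I would rely on the explicit formula~\eqref{eq:Psi_j-simplified} from the proof of Proposition~\ref{proposition:pseudo-Tauber}, which displays each $\Psi_j$ as a linear functional of $\Phi$ that is continuous in the uniform norm (the kernel $Q^z z^a$ is bounded on $[0,1]$). Consequently every individual Fourier coefficient $\psi_{j\ell}$ depends continuously on $\Phi$, and approximating $\Phi_k$ uniformly by its Fej\'er means---which are trigonometric polynomials converging uniformly to $\Phi_k$---lets both sides of~\eqref{eq:Fourier-recurrence} pass to the limit, completing the argument.
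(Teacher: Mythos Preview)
Your existence and uniqueness arguments coincide with the paper's. For the Fourier recurrence~\eqref{eq:Fourier-recurrence}, however, you take a genuinely different route. The paper does not compute any basis case: instead it reapplies Proposition~\ref{proposition:pseudo-Tauber} with the shifted parameters $\kappa\leftarrow\kappa+\chi_\ell$ and $\Phi(u)\leftarrow\Phi_k(u)\exp(-2\ell\pi i u)$, so that the mean formula~\eqref{eq:pseudo-Tauber-Fourier} directly produces the $\ell$th Fourier coefficient~$\varphi_{k\ell}$. Uniqueness (Lemma~\ref{lemma:uniqueness-fluctuations}) then identifies the resulting fluctuations with $\Psi_j(u)\exp(-2\ell\pi i u)$, yielding a triangular linear system~\eqref{eq:psi-phi-equation} expressing each $\psi_{j\ell}$ as a finite sum in the $\varphi_{k\ell}$; a short telescoping manipulation inverts this to~\eqref{eq:Fourier-recurrence}. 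This shift trick stays entirely within the framework of Proposition~\ref{proposition:pseudo-Tauber} and needs neither Euler--Maclaurin nor a density argument. Your approach is more elementary and makes the recurrence transparent on pure exponentials, at the cost of the approximation step; it is valid, and the continuity of $\Phi\mapsto\psi_{j\ell}$ via the explicit formula~\eqref{eq:Psi_j-simplified} is exactly what is needed to close it. One small correction: the Fourier series of a H\"older continuous periodic function \emph{does} converge uniformly (Dini's criterion, as also used in the proof of Theorem~\ref{theorem:use-Mellin--Perron}), so your ``main obstacle'' is less severe than you state---you could use partial sums instead of Fej\'er means---but the continuity argument is still required either way.
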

\begin{proof}\proofparagraph{Uniqueness}Uniqueness is a direct consequence of
  Lemma~\ref{lemma:uniqueness-fluctuations}.

  \proofparagraph{Existence}Existence and continuous differentiability of
  $\Psi_0$, \ldots, $\Psi_{m-1}$ are an immediate consequence of
  Proposition~\ref{proposition:pseudo-Tauber}. It also follows from setting
  $\ell=0$ in \eqref{eq:Fourier:second:second-version} below.

  \proofparagraph{Relation for Fourier Coefficients} For computing the Fourier coefficients,
  we do not apply Proposition~\ref{proposition:pseudo-Tauber} directly, but we
  apply it to a shifted version. Let $\ell\in\Z$ be fixed throughout the rest
  of the proof and recall that $\chi_\ell=(2\ell\pi i)/(\log q)$. Set
  \begin{equation*}
    S(N)\coloneqq \frac1{N^{\kappa+1}}\sum_{0\le n<N}n^\kappa \sum_{0\le k<m}(\log_q n)^k\Phi_k(\log_q n).
  \end{equation*}
  Proposition~\ref{proposition:pseudo-Tauber} with substitutions $\kappa\leftarrow
  \kappa+\chi_\ell$ and $\Phi(u)\leftarrow \Phi_k(u)\exp(-2\ell\pi i u)$ and
  summation over $0\le k<m$ shows that
  there are $1$-periodic continuous functions $\Psi_{kj}$ for $0\le j\le k<m$ such that
  \begin{align}
    \frac1{N^{\chi_\ell}}S(N)&=
    \frac{1}{N^{\kappa+1+\chi_\ell}}\sum_{0\le n<N}n^{\kappa+\chi_\ell}
    \sum_{0\le k<m}(\log_q n)^k\Phi_k(\log_q n)\exp(-2\ell\pi i \log_q n) \notag\\
                             &=
    \sum_{0\le k<m}\sum_{j=0}^k \binom{k}{j}
    (\log_q N)^{k-j}\Psi_{kj}({\log_q
                               N})+\oh{1}\label{eq:Fourier:second:first-version}
  \end{align}
  with
  \begin{equation}\label{eq:Fourier:second:integral}
    \int_{0}^1 \Psi_{kj}(u)\,\dd u = \frac{(-1)^jj!}{(\log q)^j
                                  (\kappa+1+\chi_\ell)^{j+1}}\int_{0}^1
                                  \Phi_k(u)\exp(-2\ell\pi i u)\,\dd u,
  \end{equation}
  where the integral on the right-hand side equals the Fourier
  coefficient~$\varphi_{kl}$ by definition.
  Replacing $j$ by $k-j$ in \eqref{eq:Fourier:second:first-version} and
  switching the order of summation yields
  \begin{equation}\label{eq:Fourier:second:second-version}
    \frac{1}{N^{\chi_\ell}}S(N) =
    \sum_{0\le j<m}(\log_q N)^{j}\Xi_j(\log_q N)+\oh{1}
  \end{equation}
  with
  \begin{equation}\label{eq:Xi-definition}
    \Xi_j(u) \coloneqq \sum_{j\le k <m} \binom{k}{j}
    \Psi_{k(k-j)}(u)
  \end{equation}
  for $0\le j<m$ and $u\in\R$. By construction, $\Xi_j$ is $1$-periodic and
  continuously differentiable. Multiplying~\eqref{eq:Fourier:second:second-version} by $N^{\chi_\ell} = \exp(2\ell\pi i \log_q N)$ and the uniqueness obtained in
  Lemma~\ref{lemma:uniqueness-fluctuations} show that
  \begin{equation*}
    \Psi_j(u) = \Xi_j(u)\exp(2\ell\pi i u)
  \end{equation*}
  for $0\le j<m$ and $u\in\R$.

  By integration and using \eqref{eq:Xi-definition} and \eqref{eq:Fourier:second:integral}, we get
  \begin{equation}\label{eq:psi-phi-equation}
    \begin{aligned}
    \psi_{j\ell} &=\int_{0}^1 \Psi_j(u)\exp(-2\ell \pi i u)\,\dd u
    = \int_{0}^1 \Xi_j(u)\,\dd u \\
    &= \sum_{j\le k <m}\binom{k}{j}\int_{0}^1\Psi_{k(k-j)}(u)\,\dd u\\
    &= \sum_{j\le k<m}\binom{k}{j}\frac{(-1)^{k-j}(k-j)!}{(\log
      q)^{k-j}(\kappa+1+\chi_\ell)^{k-j+1}}\varphi_{k\ell}
    \end{aligned}
  \end{equation}
  for $0\le j<m$.

  \proofparagraph{Solving the Linear System} We have to solve the system \eqref{eq:psi-phi-equation} for
  $\varphi_{k\ell}$. To do so, we multiply \eqref{eq:psi-phi-equation} by
  $(-1)^jj!(\log q)^{-j}(\kappa+1+\chi_\ell)^{-j+1}$ and get
  \begin{equation*}
    \frac{(-1)^j j!}{(\log q)^j (\kappa+1+\chi_\ell)^{j-1}} \psi_{j\ell} = \sum_{j\le
      k<m}\frac{(-1)^{k}k!}{(\log q)^k (\kappa+1+\chi_\ell)^{k}}
    \varphi_{k\ell}
  \end{equation*}
  for $0\le j<m$. The equation remains valid for $j=m$. Replacing $j$ by $j+1$ and taking the difference yields
  \begin{multline*}
    \frac{(-1)^j j!}{(\log q)^j (\kappa+1+\chi_\ell)^{j-1}}\psi_{j\ell} +
    \frac{(-1)^j (j+1)!}{(\log q)^{j+1}(\kappa+1+\chi_\ell)^j}\psi_{(j+1)\ell}
    \\
    = \frac{(-1)^{j}j!}{(\log q)^j (\kappa+1+\chi_\ell)^j}
    \varphi_{j\ell}
  \end{multline*}
  for $0\le j<m$.
  Cancelling the common factors and replacing $j$ by $k$ yields \eqref{eq:Fourier-recurrence}.
\end{proof}

\subsection{Proof of Theorem~\ref{theorem:use-Mellin--Perron}}
The idea of the proof is to compute the repeated summatory function of $F$
twice: On the one hand, we use the pseudo-Tauberian Proposition~\ref{proposition:pseudo-Tauber:combination} to rewrite the
right hand side of \eqref{eq:F-N-periodic} in terms of periodic
functions~$\Psi_{jk}$. On the other hand, we compute it using a higher
order Mellin--Perron summation
formula, relating it to the singularities of $\calF$.
More specifically, the expansions at the singularities of $\calF$ give
the Fourier coefficients of $\Psi_{jk}$. The Fourier coefficients of the functions
$\Psi_{jk}$ are related to those of the functions $\Phi_k$ via~\eqref{eq:Fourier-recurrence}.
\begin{proof}[Proof of Theorem~\ref{theorem:use-Mellin--Perron}]
  \proofparagraph{Initial observations and notations}
  As $\Phi_k$ is Hölder-continuous, its Fourier series converges by Dini's
  criterion, see e.g. \cite[p.~52]{Zygmund:2002:trigon}.

  We define $\varphi_{k\ell}\coloneqq \int_{0}^1
  \Phi_k(u)\exp(-2\ell\pi i u)\,\dd u$ to be the $\ell$th Fourier coefficient of
  $\Phi_k$ for $\ell\in\Z$
  and we will prove~\eqref{eq:Fourier-coefficient}. For any sequence $(g(n))_{n\ge
  1}$, we set $(\calS g)(N)\coloneqq \sum_{1\le n<N}g(n)$.
  We set $J=1 + \max\set{\floor{\eta}, 0}$. In particular, $J$ is a positive
  integer with $J>\eta$.

  \proofparagraph{Asymptotic Summation}
  For each integer $j$ with $0\le j\le J$, and by \eqref{eq:F-N-periodic} and
  Proposition~\ref{proposition:pseudo-Tauber:combination},
  a simple induction shows that
  there exist
  $1$-periodic continuous functions $\Psi_{jk}$ for $j\ge 0$ and $0\le k<m$ such that
  \begin{equation}\label{eq:S-j+1-f-asymptotic}
    (\calS^{j+1} f)(N) = N^{\kappa+j}\sum_{0\le k<m} (\log_q N)^k
    \Psi_{jk}(\fractional{\log_q N}) + \Oh{N^{\kappa_0+j}}
  \end{equation}
  for integers $N\to\infty$. In fact, $\Psi_{0k}=\Phi_k$ for
  $0\le k<m$. For $j\ge 1$ and $0\le k<m$, $\Psi_{jk}$ is continuously differentiable.

  We denote the corresponding Fourier coefficients by
  \begin{equation*}
    \psi_{jk\ell}\coloneqq \int_{0}^1 \Psi_{jk}(u)\exp(-2\ell\pi i u)\,\dd u
  \end{equation*}
  for $0\le j\le J$, $0\le k<m$, $\ell\in\Z$. We also set $\psi_{jm\ell}=0$ for $0\le
  j\le J$ and $\ell\in\Z$. By
  \eqref{eq:Fourier-recurrence}, the Fourier coefficients satisfy the
  recurrence relation
  \begin{equation}\label{eq:Fourier:Fourier-coefficient-recursion}
    \psi_{jk\ell} = (\kappa+j+1+\chi_\ell)\psi_{(j+1)k\ell} +
    \frac{(k+1)}{\log q} \psi_{(j+1)(k+1)\ell}
  \end{equation}
  for $0\le j<J$, $0\le k<m$ and $\ell\in\Z$.

  \proofparagraph{Explicit Summation}
  Explicitly, we have
  \begin{equation}\label{eq:S-j+1-explicit}
    (\calS^{j+1}f)(N) = \sum_{1\le n_1<n_2<\cdots<n_{j+1}<N}f(n_1)
    = \sum_{1\le n<N}f(n)\sum_{n<n_2<\cdots<n_{j+1}<N}1
  \end{equation}
  for $0\le j \le J$. Note that we formally write the outer sum over the range
  $1\le n<N$ although the inner sum is empty (i.e., equals~$0$) for $n\ge N-j$; this will be useful
  later on. The inner sum counts the number of selections of $j$ elements out
  of $\set{n+1,\ldots, N-1}$, thus we have
  \begin{equation}\label{eq:Fourier:explicit-summation}
    (\calS^{j+1}f)(N) = \sum_{1\le n< N}\binom{N-n-1}{j}f(n)=\sum_{1\le n< N}\frac1{j!}(N-n-1)^{\underline{j}}f(n)
  \end{equation}
  for $0\le j\le J$ and falling factorials
  $z^{\underline{j}}\coloneqq z(z-1)\dotsm (z-j+1)$.

  The polynomials $\frac1{j!}(X-1)^{\underline j}$, $0\le j\le J$, are clearly a basis of
  the space of polynomials in $X$ of degree at most $J$. Thus there exist
  rational numbers $b_0$, \ldots, $b_J$ such that
  \begin{equation*}
    \frac{X^J}{J!}=\sum_{j=0}^J \frac{b_j}{j!} (X-1)^{\underline{j}}.
  \end{equation*}
  Comparing the coefficients of  $X^J$ shows that $b_J=1$. Substituting
  $X\leftarrow N-n$, multiplication by $f(n)$ and summation over $1\le n<N$
  yields
  \begin{equation*}
    \frac1{J!}\sum_{1\le n<N}(N-n)^J f(n) = \sum_{j=0}^J b_j (\calS^{j+1}f)(N)
  \end{equation*}
  by~\eqref{eq:Fourier:explicit-summation}. When inserting the asymptotic
  expressions from \eqref{eq:S-j+1-f-asymptotic}, the summands for $0\le j<
  J$ are absorbed by the error term~$\Oh{N^{\kappa_0+J}}$
  of the summand for $j=J$ because $\Re\kappa - \kappa_0 < 1$. Thus
  \begin{equation}\label{eq:Mellin-Perron-sum}
    \frac1{J!}\sum_{1\le n<N}(N-n)^J f(n) =  N^{\kappa+J}\sum_{0\le k<m} (\log_q N)^k
    \Psi_{Jk}(\fractional{\log_q N}) + \Oh{N^{\kappa_0+J}}
  \end{equation}
  for $N\to\infty$.

  \proofparagraph{Mellin--Perron summation}
  By the $J$th order Mellin--Perron summation
  formula (see \cite[Theorem~2.1]{Flajolet-Grabner-Kirschenhofer-Prodinger:1994:mellin}), we have
  \begin{equation*}
    \frac1{J!}\sum_{1\le n<N}(N-n)^J f(n)  = \frac1{2\pi
      i}\int_{\sigma_a+1-i\infty}^{\sigma_a+1+i\infty} \frac{\calF(s) N^{s+J}}{s(s+1)\dotsm(s+J)}\,\dd s
  \end{equation*}
  with the arbitrary choice $\sigma_a+1>\sigma_a$ for the real part of
  the line of integration.
  The growth condition~\eqref{eq:Dirichlet-order} allows us to shift the
  line of integration to the left such that
  \begin{align*}
    \frac1{J!}\sum_{1\le n<N}(N-n)^J f(n) &=
    \sum_{\ell\in\Z}
    \Res[\Big]{\frac{\calF(s)N^{s+J}}{s(s+1)\dotsm (s+J)}}%
    {s=\kappa+\chi_\ell}\\
    &\phantom{=}\hspace*{0.65em}+ \frac{\calF(0)}{J!}N^J\iverson{\kappa_0<0}\iverson[\Big]{\kappa\notin \frac{2\pi i}{\log q}\Z}\\
    &\phantom{=}\hspace*{0.65em}+ \frac1{2\pi
      i}\int_{\kappa_0-i\infty}^{\kappa_0+i\infty} \frac{\calF(s) N^{s+J}}{s(s+1)\dotsm (s+J)}\,\dd s.
  \end{align*}
  The second term corresponds to a possible pole at $s=0$ which is not taken care of in the first sum; note that $\calF(s)$ is analytic at $s=0$
  by assumption because of~$\kappa_0<0$.
  We now compute the residue at $s=\kappa+\chi_\ell\eqqcolon s_\ell$. We use
  \begin{equation*}
    N^{s+J} = \sum_{k\ge 0}\frac1{k!}(\log N)^k N^{s_\ell+J} (s-s_\ell)^k
  \end{equation*}
  to split up the residue as
  \begin{equation*}
    \Res[\Big]{\frac{\calF(s)N^{s+J}}{s(s+1)\dotsm(s+J)}}{s=s_\ell} =
    \sum_{0\le k\le m}(\log_q N)^k N^{s_\ell+J} \xi_{k\ell}
  \end{equation*}
  for
  \begin{equation}\label{eq:Fourier:xi-as-residue}
    \xi_{k\ell} = \frac{(\log q)^k}{k!}
    \Res[\Big]{\frac{\calF(s)(s-s_\ell)^k}{s(s+1)\dotsm(s+J)}}{s=s_\ell}.
  \end{equation}
  Note that we allow $k=m$ for the case of $\kappa\in\frac{2\pi i}{\log q}\Z$
  in which case $\calF(s)/s$ might have a pole of order $m+1$ at $s=0$.
  Using the growth condition~\eqref{eq:Dirichlet-order} and the
  choice of~$J$ yields
  \begin{equation}\label{eq:Fourier:growth-frac}
    \frac{\calF(s)}{s(s+1)\dotsm(s+J)}
    = \Oh[\big]{\abs{\Im s}^{-1-J+\eta}} = \oh[\big]{\abs{\Im s}^{-1}}
  \end{equation}
  for $\abs{\Im s}\to\infty$ and $s$ which are at least a distance~$\delta$
  away from the poles~$s_\ell$.
  By writing the residue in~\eqref{eq:Fourier:xi-as-residue}
  in terms of an integral over a rectangle around
  $s=s_\ell$ (distance again at least~$\delta$ away from $s_\ell$),
  we see that \eqref{eq:Fourier:growth-frac} implies
  \begin{equation}\label{eq:Fourier:psi-growth}
    \xi_{k\ell} = \Oh[\big]{\abs{\ell}^{-1-J+\eta}} = \oh[\big]{\abs{\ell}^{-1}}
  \end{equation}
  for $\abs{\ell}\to\infty$, as well. Moreover,
  by~\eqref{eq:Fourier:growth-frac}, we see that
  \begin{equation*}
    \frac1{2\pi i} \int_{\kappa_0-i\infty}^{\kappa_0+i\infty}
    \frac{\calF(s) N^{s+J}}{s(s+1)\dotsm(s+J)}\,\dd s
    = \Oh{N^{\kappa_0+J}}.
  \end{equation*}

  Thus we proved that
  \begin{multline}\label{eq:calculate-Fourier-first}
    \frac1{J!}\sum_{1\le n<N}(N-n)^J f(n) = N^{\kappa+J}\sum_{0\le k\le m} (\log_q N)^k
    \Xi_k(\log_q N) \\+ \frac{\calF(0)}{J!}N^J\iverson{\kappa_0<0}\iverson[\Big]{\kappa\notin \frac{2\pi i}{\log q}\Z}+ \Oh{N^{\kappa_0+J}}
  \end{multline}
  for
  \begin{equation}\label{eq:Psi-tilde-k-definition}
    \Xi_k(u) =\sum_{\ell\in\Z}\xi_{k\ell} \exp(2\ell\pi i u)
  \end{equation}
  where the $\xi_{k\ell}$ are given in \eqref{eq:Fourier:xi-as-residue}.
  By \eqref{eq:Fourier:psi-growth}, the Fourier series
  \eqref{eq:Psi-tilde-k-definition} converges uniformly and absolutely. This
  implies that $\Xi_k$  is a $1$-periodic continuous function.

  \proofparagraph{Fourier Coefficients}
  By
  \eqref{eq:Mellin-Perron-sum}, \eqref{eq:calculate-Fourier-first} and Lemma~\ref{lemma:uniqueness-fluctuations}, we see
  that $\Xi_k=\Psi_{Jk}$. This immediately implies that $\calF(0)=0$ if
  $\kappa_0<0$ and $\kappa\notin\frac{2\pi i}{\log q}\Z$.
  We claim that
  \begin{equation}\label{eq:Fourier-coefficients-as-residue}
    \psi_{jk\ell} = \frac{(\log q)^k}{k!}
    \Res[\Big]{\frac{\calF(s)(s-s_\ell)^k}{s(s+1)\dotsm (s+j)}}{s=s_\ell}
  \end{equation}
  holds for $0\le j\le J$, $0\le k<m$, $\ell\in\Z$. We prove \eqref{eq:Fourier-coefficients-as-residue} by backwards
  induction on~$j$. For $j=J$, Equation~\eqref{eq:Fourier-coefficients-as-residue} is a
  restatement of \eqref{eq:Fourier:xi-as-residue} since
  $\psi_{Jk\ell}=\xi_{k\ell}$ because of $\Psi_{Jk}=\Xi_k$. Assume that
  \eqref{eq:Fourier-coefficients-as-residue} holds for some $j+1$. Then
  \eqref{eq:Fourier:Fourier-coefficient-recursion},
  \eqref{eq:Fourier-coefficients-as-residue} for $j+1$ and linearity of the residue imply that
  \begin{equation*}
    \psi_{jk\ell}=\frac{(\log
      q)^k}{k!}\Res[\Big]{\frac{\calF(s)(s-s_\ell)^k\bigl((s_\ell+j+1) +
      (s-s_\ell)\bigr)}{s(s+1)\dotsm (s+j)(s+j+1)}}{s=s_\ell}
  \end{equation*}
  and \eqref{eq:Fourier-coefficients-as-residue} follows for $j$.
  As for $0\le k<m$, $\Phi_{k}=\Psi_{0k}$ implies $\varphi_{k\ell}=\psi_{0k\ell}$
  for all $\ell\in\Z$, \eqref{eq:Fourier-coefficient} follows.
\end{proof}

\section{Proof of Theorem~\ref{theorem:simple}}\label{section:proof-theorem-simple}
\begin{proof}[Proof of Theorem~\ref{theorem:simple}]
  By Remark~\ref{remark:regular-sequence-as-a-matrix-product}, we have
  $x(n)=e_1 f(n)v(0)$. If $v(0)=0$, there is nothing to show.
  Otherwise, as observed in Appendix~\ref{section:q-regular-matrix-product},
  $v(0)$ is a right eigenvector of $A_0$ associated to the eigenvalue $1$.
  As a consequence, $Kv(0)$, $\vartheta_m v(0)$ and $\vartheta v(0)$ all vanish.
  Therefore, \eqref{eq:formula-X-n} follows from Corollary~\ref{corollary:main}
  by multiplication by $e_1$ and $v(0)$ from left and right, respectively.

  The functional equation \eqref{eq:functional-equation-V} follows from
  Theorem~\ref{theorem:Dirichlet-series} for $n_0=1$ by multiplication from right
  by $v(0)$.

  For computing the Fourier coefficients, we denote the rows of $T$ by $w_1$,
  \ldots, $w_d$. Thus $w_j$ is a generalised left eigenvector of $C$ of some
  order $m_j$ associated to some eigenvalue $\lambda_j$ of $C$. We can write
  $e_1=\sum_{j=1}^d c_j w_j$ for some suitable constants $c_1$, \ldots, $c_d$.
  For $1\le j\le d$, we consider the sequence  $(h_j(n))_{n\ge 1}$
  with
  \begin{equation*}
    h_j(n)=w_j\bigl(v(n)+v(0)\iverson{n=1}\bigr).
  \end{equation*}
  The reason for incorporating
  $v(0)$ into the value for $n=1$ is that the corresponding Dirichlet series $\calH^{(j)}(s)\coloneqq \sum_{n\ge
    1}n^{-s}h_j(n)$ only takes values at $n\ge 1$ into account. By definition, we
  have  $\calH^{(j)}(s)=w_jv(0) + w_j\calV(s)$. Taking the linear combination yields
  $\sum_{j=1}^dc_j\calH^{(j)}(s)=x(0) + \calX(s)$. We choose
  $\kappa_0>\max\set{-1, \log_q R}$ such that there are no eigenvalues
  $\lambda\in\sigma(C)$ with $\max\set{-1, \log_q R}<\log_q\lambda\le \kappa_0$ and such
  that $\kappa_0\neq 0$.

  By
  Theorem~\ref{theorem:contribution-of-eigenspace}, we have
  \begin{equation}\label{eq:simple:sum-lambda_j}
    \sum_{1\le n<N}h_j(n) = N^{\log_q \lambda_j}\sum_{0\le k<m_j}(\log_q N)^k
    \Psi_{jk}(\fractional{\log_q N}) + \Oh{N^{\kappa_0}}
  \end{equation}
  for $N\to\infty$ for suitable 1-periodic Hölder-continuous functions $\Psi_{jk}$
  (which vanish if $\abs{\lambda_j}\le R$). By
  Theorem~\ref{theorem:Dirichlet-series}, the Dirichlet
  series $\calH^{(j)}(s)$ is meromorphic for $\Re s>\kappa_0$ with possible
  poles at $s=\log_q \lambda_j + \chi_\ell$.

  We claim that
  \begin{equation}\label{claim-H-j=0}
    \calH^{(j)}(0) = 0
  \end{equation}
  for $1\le j\le d$ if $\kappa_0<0$ and $\lambda_j\neq 1$.

  We first prove the claim for the case that $\abs{\lambda_j}>1/q$. In that
  case,
  the sequence $(h_j(n))_{n\ge 1}$ satisfies
  the prerequisites of Theorem~\ref{theorem:use-Mellin--Perron}, either with
  $\kappa=\log_q \lambda_j$ if $\Re \log_q \lambda_j>\kappa_0$ or with
  arbitrary real $\kappa>\kappa_0$ and $\Phi_k=0$. The theorem then implies \eqref{claim-H-j=0}.

  We now turn to the proof of \eqref{claim-H-j=0} for the case that
  $\abs{\lambda_j} \le 1/q$. We have
  $\sum_{1\le n<N}h_j(n)=o(1)$
  as $N\to\infty$ by~\eqref{eq:simple:sum-lambda_j} and therefore
  $\calH^{(j)}(0)=0$ because $\calH^{(j)}(s)$ is analytic for $s=0$. This
  concludes the proof of \eqref{claim-H-j=0}.

  If $\abs{\lambda_j}>\max\set{R, 1/q}$, we apply
  Theorem~\ref{theorem:use-Mellin--Perron} on the sequence $(h_j(n))_{n\ge 1}$
  and obtain
  \begin{equation*}
    \Psi_{jk}(u)=\sum_{\ell\in\Z}\psi_{jk\ell}\exp(2\pi i\ell u)
  \end{equation*}
  for
  \begin{equation*}
    \psi_{jk\ell} = \frac{(\log q)^k}{k!}
    \Res[\Big]{\frac{\calH^{(j)}(s)\bigl(s-\log_q\lambda_j-\chi_\ell\bigr)^k}{s}}%
    {s=\log_q \lambda_j + \chi_\ell}.
  \end{equation*}

  We now have to relate the results obtained for the sequences $h_j$ with the
  results claimed for the original sequence $f(n)$.

  For $\lambda\in\sigma(C)$ with $\abs{\lambda}>\max\set{R, 1/q}$, we have
  \begin{align}
    \Phi_{\lambda k}(u)&=\sum_{\substack{1\le j\le d\\\lambda_j=\lambda}}c_j\Psi_{jk}(u)\notag\\
    \intertext{and}
    \varphi_{\lambda k\ell}
    &= \sum_{\substack{1\le j\le d\\\lambda_j=\lambda}} c_j \psi_{jk\ell} \notag\\
    &=\frac{(\log q)^k}{k!}
    \Res[\bigg]{\sum_{\substack{1\le j\le d\\\lambda_j=\lambda}} \frac{c_j\calH_j(s)
      \bigl(s-\log_q\lambda_j-\chi_\ell\bigr)^k}{s}}%
    {s=\log_q \lambda + \chi_\ell}.\label{eq:residue-with-condition}
  \end{align}

  In order to prove \eqref{eq:Fourier-coefficient:simple}, we need to remove the
  condition $\lambda_j=\lambda$ in \eqref{eq:residue-with-condition}. This is
  trivial for $\lambda\neq 1$ because in that case, $\calH_j(s)/s$ is analytic
  in $s=\log_q\lambda+\chi_\ell$ for $\lambda_j\neq\lambda$. If $\lambda=1$, the assumptions on $\lambda$
  imply that $R<1$ and therefore $\kappa_0<0$. In this case, we use
  \eqref{claim-H-j=0} to see that $\calH_j(s)/s$ is analytic for $\lambda_j\neq
  1$, too.
\end{proof}

It might seem to be somewhat artificial that
Theorem~\ref{theorem:use-Mellin--Perron} is used to prove that
$\calH^{(j)}(0)=0$ in some of the cases above. In fact, this can also be shown
directly using the linear representation.

\begin{remark}
  With the notations of the proof of Theorem~\ref{theorem:simple},
  $\calH^{(j)}(0)=0$ if $\lambda_j\neq 1$ and $R<1$ can also be shown using the
  functional equation~\eqref{eq:functional-equation-V}.
\end{remark}
\begin{proof}
  We prove this by induction on $m_j$. By definition of $T$, we have
  $w_j(C-\lambda_j I)=\iverson{m_j>1}w_{j+1}$. (We have $m_d=1$ thus $w_{d+1}$
  does not actually occur.)
  If $m_j>1$, then $\calH^{(j+1)}(0)=0$ by induction hypothesis.

  We add $(I-q^{-s}C)v(0)$ to \eqref{eq:functional-equation-V} and get
  \begin{align*}
    (I-q^{-s}C)(v(0)+\calV(s)) = (I-q^{-s}C)v(0)
    &+ \sum_{n=1}^{q-1}n^{-s}v(n) \\
    &+ q^{-s}\sum_{r=0}^{q-1}A_r
    \sum_{k\ge 1}\binom{-s}{k}\Bigl(\frac r q\Bigr)^k \calV(s+k).
  \end{align*}
  Multiplication by $w_j$ from the left yields
  \begin{align*}
    (1-q^{-s}\lambda)\calH^{(j)}(s) &= \iverson{m_j>1}\,q^{-s}\calH^{(j+1)}(s) \\
    &\phantom{{}={}}+ w_j (I - q^{-s}C)v(0)
    + w_j\sum_{n=1}^{q-1}n^{-s}v(n) \\
    &\phantom{{}={}}+ w_jq^{-s}\sum_{r=0}^{q-1}A_r
    \sum_{k\ge 1}\binom{-s}{k}\Bigl(\frac r q\Bigr)^k \calV(s+k).
  \end{align*}
  As $R<1$ and $\lambda_j\neq 1$, the Dirichlet series $\calH^{(j)}(s)$ is
  analytic in $s=0$ by Theorem~\ref{theorem:Dirichlet-series}. It is therefore
  legitimate to set $s=0$ in the above equation. We use the induction hypothesis that
  $\calH^{(j+1)}(0)=0$ as well as the fact that $v(n)=A_nv(0)$
  (note that $v(0)$ is a right eigenvector of $A_0$ to the eigenvalue~$1$;
  see Appendix~\ref{section:q-regular-matrix-product})
  for $0\le n<q$ to get
  \begin{equation*}
    (1-\lambda)\calH^{(j)}(0)=w_j\sum_{n=0}^{q-1}A_n v(0) -w_jCv(0) = 0
  \end{equation*}
  because all binomial coefficients $\binom{0}{k}$ vanish.
\end{proof}

\section{Appendix to Sequences Defined by Transducer Automata}
\label{appendix:transducers}
\movepfa
\movepfb
\begin{proof}
  As usual, the condensation of $D$ is
  the graph resulting from contracting each component of the original digraph
  to a single new vertex. By construction, the condensation is acyclic.

  We choose a refinement of the partial order of the components given by the successor relation in
  the condensation to a linear order in such a way that the final components
  come last. Note that this implies that if there is an arc from one component to another, the
  former component comes before the latter component in our linear order. We
  then denote the components by $\calC_1$, \ldots, $\calC_k$, $\calC_{k+1}$,
  \ldots, $\calC_{k+\ell}$ where the the first $k$ components are non-final and
  the last $\ell$ are final.
  W.l.o.g., we assume that the vertices of the original digraph $D$ are labeled such that
  vertices within a component get successive labels and such that the linear
  order of the components established above is respected.

  Therefore, the adjacency matrix $M$ is an upper block triagonal matrix of the
  shape
  \begin{equation*}
    M=\left(
    \begin{array}{c|c|c|c|c|c}
      M_1&\star&\star&\star&\star&\star\\\hline
      0&\ddots&\star&\star&\star&\star\\\hline
      0&0&M_k&\star&\star&\star\\\hline
      0&0&0&M_{k+1}&0&0\\\hline
      0&0&0&0&\ddots&0\\\hline
      0&0&0&0&0&M_{k+\ell}
    \end{array}
    \right)
  \end{equation*}
  where $M_j$ is the adjacency matrix of the component $\calC_j$.

  Each row of the non-negative square matrix $M$ has sum $q$ by construction.
  Thus $\inftynorm{M}=q$ and therefore the spectral radius of $M$ is bounded
  from above by $q$. As the all ones vector is obviously a right eigenvector
  associated with the eigenvalue $q$ of $M$, the spectral radius of $M$ equals
  $q$. The same argument applies to $M_{k+1}$, \ldots,
  $M_{k+\ell}$.

  By construction, the
  matrices $M_{k+1}$, \ldots, $M_{k+\ell}$ are irreducible.
  For $1\le j\le \ell$ all eigenvalues $\lambda$ of $M_{k+j}$ of modulus $q$ have
  algebraic and geometric multiplicities $1$ by Perron--Frobenius theory
  and $\lambda = q \zeta$ for some $p_{k+j}$th root of unity $\zeta$ where $p_{k+j}$ is
  the period of $\calC_{k+j}$.

  By construction, the vertices of the components $\calC_j$ for $1\le j\le k$
  have out-degree at most $q$. We add loops to these vertices to increase
  their out-degree to $q$, resulting in $\tildecalC_j$. The corresponding
  adjacency matrices are denoted by $\tildeM_j$. By the above argument,
  $\tildeM_j$ has spectral radius $q$ for $1\le j\le k$. As $M_j\le \tildeM_j$
  and $M_j\neq \tildeM_j$ by construction, the spectral radius of $M_j$ is strictly less than
  $q$ by \cite[Theorem~8.8.1]{Godsil-Royle:2001:alggraphtheory}.

  A left eigenvector $v_j$ of $M_{k+j}$ for $1\le j\le \ell$ can easily be
  extended to a left eigenvector $(0, \ldots, 0, v_j, 0, \ldots, 0)$ of
  $M$. This observation shows that the geometric multiplicity of any eigenvalue
  of $M$ of modulus $q$ is at least its algebraic multiplicity. This concludes
  the proof.
\end{proof}

\begin{proof}[Proof of Corollary~\ref{corollary:transducer} (Continued)]
\movetransducer
\end{proof}

\section{Appendix to Pascal's Rhombus}
\label{appendix:pascal}

\subsection{Recurrence Relations}
\label{appendix:pascal:recurrence}
\movepascalrecurrences
We obtain \eqref{eq:rec-pascal-rhombus:main}.

\moveproofpascalmeromorphic

\movesectionpascalfourier

\movesectionpascalexplicitbounds

\end{document}

